\newtheorem{theorem}{Theorem}[section]
\newtheorem{example}{Example}[section]
\newtheorem{remark}{Remark}[section]
\newtheorem{lemma}{Lemma}[section]
\newtheorem{proposition}{Proposition}[section]
\newtheorem{corollary}[theorem]{Corollary}
\title{On Tail Dependence Matrices \\ \Large{The Realization Problem for Parametric Families}}
\author{Nariankadu D. Shyamalkumar\thanks{Corresponding author: N. D. Shyamalkumar; Tel.: +1-319-335-1980} }
\author{Siyang Tao}
\affil{\small{Department of Statistics and Actuarial Science\protect\\ The University of Iowa\protect\\ 241 Schaeffer Hall, Iowa City, IA 52242, USA \protect\\ Emails: shyamal-kumar@uiowa.edu (N. D. Shyamalkumar),  siyang-tao@uiowa.edu (S. Tao)}\vspace{-2ex}} 
\date{}
\def\E#1{\mathbb{E}{\left(#1\right)}}
\def\Pr#1{{\rm Pr}\left(#1\right)}
\def\eqd{\buildrel \rm d \over =}
\def\gbequiv{\buildrel {\mathbf{B}} \over \equiv}
\def\calA{{\mathcal{A}}}
\def\calB{{\mathcal{B}}}
\def\calG{{\mathcal{G}}}
\def\calT{{\mathcal{T}}}
\def\x{(x)}
\def\Real{\mathbb{R}}
\def\iff{if and only if }
\def\myord{ \prec_{{\scriptscriptstyle \mathbf{d}}}}
\begin{document}

\maketitle

\begin{abstract}
Among bivariate tail dependence measures, the tail dependence coefficient has emerged as the popular choice. 
Akin to the correlation matrix, a multivariate dependence measure is constructed using these bivariate measures, and this is referred to in the literature as the tail dependence matrix (TDM). While the problem of determining whether a given ${d\times d}$ matrix is a correlation matrix is of the order $O(d^3)$ in complexity, determining if a matrix is a TDM (the realization problem) is believed to be non-polynomial in complexity. Using a linear programming (LP) formulation, we show that the combinatorial structure of the constraints is related to the intractable max-cut problem in a weighted graph. This connection provides an avenue for constructing parametric classes admitting a polynomial in $d$ algorithm for determining membership in its constraint polytope. The complexity of the general realization problem is justifiably of much theoretical interest. Since in practice one resorts to lower dimensional parametrization of the TDMs, we posit that it is rather the complexity of the realization problem restricted to parametric classes of TDMs, and algorithms for it, that are more practically relevant. In this paper, we show how the inherent symmetry and sparsity in a parametrization can be exploited to achieve a significant reduction in the LP formulation, which can lead to polynomial complexity of such realization problems - some parametrizations even resulting in the constraint polytope being independent of $d$. We also explore the use of a probabilistic viewpoint on TDMs to derive the constraint polytopes.      

\emph{Keywords: }Tail Dependence Coefficient; Copula; Tail Correlation; Tail Dependence Compatibility Problem

\emph{2000 Mathematics Subject Classification:} 62H20; 60G70; 52B12; 52B05; 05-04

\end{abstract}

\section{Introduction}
For a $d$-dimensional random vector the complete description of the dependence between its components is widely accepted to be provided by its copula\footnote{We assume that the the marginals are continuous, which implies a unique copula by Sklar's Theorem.}. The accompanying complexity of complete descriptions make them unsuitable for some purposes. An example of one such is the task of comparing the dependence embodied by two distributions in the same parametric family. 
This becomes a lopsided trade-off when $d$ is large. A solution then is to summarize multivariate dependence in terms of pairwise dependence, with pairwise dependence summarized by a numerical measure as well. In the case of dependence, this has led to various definitions of rank correlations like Kendall's tau and Spearman's rho, which unlike the linear correlation coefficient are functions of the copula. A popular measure of bivariate dependence in the tail is the lower\footnote{We note that without loss of generality we can restrict attention to lower tail dependence as for a copula $C$, $C^\ast(u_1,u_2):=C(1-u_1,1-u_2)$ is a copula as well, and lower tail dependence coefficient w.r.t. $C$ equals the upper tail dependence coefficient w.r.t. $C^\ast$ and vice versa.} tail dependence coefficient. For a random vector $(X_1,X_2)$ with $X_1\sim F_1$ and $X_2\sim F_2$, it is defined as  
\begin{eqnarray*}
\lim_{u \downarrow 0 } \frac{\Pr{F_1(X_1) \leq u, F_2(X_2) \leq u}}{u}, 
\end{eqnarray*} 
given the limit exist; we denote the above limit by $\chi(X_1,X_2)$. This limit need not always exist, see \citet{KortAlb}. We refer to \citet{Fiebig2017} for historical references and related measures used in other fields. The function $\chi(\cdot,\cdot)$, which we will simply refer to as the tail dependence coefficient, is clearly symmetric and takes values in $[0,1]$. Moreover, for $c\in[0,1]$, $I\sim Ber(c)$, $U_1, U_2$ iid $U(0,1)$ random variables independent of $I$, $\chi(U_1,IU_1+(1-I)U_{2})=c$; hence the range of $\chi(\cdot,\cdot)$ is $[0,1]$. 

The matrix of tail dependence coefficients $(\chi(X_i,X_j))_{1\leq i,j\leq d}$ corresponding to a $d$ dimensional random vector $\mathbf{X}_{d\times 1}$ is called a  Tail Dependence Matrix (TDM). The set of all such matrices is denoted by $\calT_d$. As observed in \citet{Strokorb2013} and \citet{Embrechts2016}, determining if a given matrix belongs to $\calT_d$, the tail dependence matrix (TDM) realization problem, is deceptively hard. In \citet{Strokorb2013} and \citet{Fiebig2017} it is shown that $\calT_d$ is essentially a polytope in $\Real^{\binom{d}{2}}$ whose number of vertices and facets grow non-polynomially in $d$. Moreover, a complete description of these polytopes is available only for $d\leq 6$; see \citet{Fiebig2017}.  In contrast, to determine if a matrix is a valid correlation matrix requires essentially checking non-negative definiteness which requires $O(d^3)$ number of operations.  

Our interest in the TDM realization problem arose from \citet{Embrechts2016}, where the design of an algorithm for the TDM realization problem is posed as an open problem of significant interest. One of their main results is the characterization of $\calT_d$ in terms of matrices which can be expressed as $\E{\mathbf{X}\mathbf{X}^{\mathsf{T}}}$ where $\mathbf{X}_{d\times 1}$ is a random vector taking values in $\{0,1\}^d$. Such matrices are called Bernoulli Compatible Matrices (BCMs), and we denote the set of all BCMs by $\calB_{d}$. Since $\Pr{\mathbf{X}_{d\times 1}\in\{0,1\}^d}=1$, $\E{\mathbf{X}\mathbf{X}^{\mathsf{T}}}$ is a convex combination of points in $\{\mathbf{x}\mathbf{x}^\mathsf{T}: \mathbf{x}\in \{0,1\}^d\}$, and hence the convex hull of the latter set, which is a closed convex set, equals $\calB_{d}$. A set of matrices related to $\calB_d$, denoted by $\calB^{\mathsf{I}}_d$, is defined as follows:
\[
\calB^{\mathsf{I}}_d=\{\mathbf{B} \vert \mathsf{diag}(\mathbf{B})=\mathbf{1}_{d\times 1} \hbox{ with } p\mathbf{B} \in \calB_d \hbox{ for some } p\in(0,1] \}.
\]
The above can be written as 
\[
\calB^{\mathsf{I}}_d=\{\mathbf{B} \vert \mathsf{diag}(\mathbf{B})=\mathbf{1}_{d\times 1} \hbox{ with } \mathbf{B} \in \calB_d^\ast  \},
\]
where $\calB_d^\ast$ is the convex cone generated by $\calB_d$. 
In Theorem 3.3 of \citet{Embrechts2016}, it is shown that $\calT_d=\calB^{\mathsf{I}}_d$, establishing a connection with testing membership in $\calB_d$. 

\citet{Fiebig2017} describes the polytope $\calT_d$; but the generality of the problem along with the accompanying exponential increase in complexity of the polytope only permits full description up to dimension six. Note that if there was a polynomial in dimension sized facet representation, then determination of membership will be a trivial exercise. On the other hand, \citet{Krause2018} presents an algorithm for determining the membership of an arbitrary matrix in $\calB_d$, which even though having exponential in dimension worst case performance as any other such algorithm, is able to demonstrate practically acceptable performance for dimensions more than $40$ on their well designed test battery. 
On the other hand, in large dimensional modeling problems, to escape the curse of dimensionality and for parsimony, one resorts to a lower dimensional dependence structure. This in turn results in a subclass of TDMs that has a lower dimensional parametric structure. We show in this paper that for many parametric classes of practical interest the constraint polytope can be derived in polynomial time. Developing techniques that will help in deriving complete descriptions of such constraint polytopes we posit is of much practical import, and this is our main contribution.  

In section 2, we emphasize the probabilistic viewpoint as a technique to derive necessary and/or sufficient conditions for sub-classes of TDMs. In particular, we find necessary and sufficient conditions for a more general version of the motivating example of \citet{Embrechts2016}, and derive a set of sufficient conditions for the sub-class of TDMs resulting from $d$ consecutive observations of a stationary process. Interestingly, this latter set of sufficient conditions is satisfied by three classes of TDMs considered in \citet{Embrechts2016} and another considered in \citet{Falk2005}. In section 3, we point out that while the membership problem in $\calT_d$ is very likely to be NP-hard, as it is for $\calB_d$, it is yet to be settled. While the duals of LP formulations of \citet{Lee1993} and \citet{Krause2018} of the TDM realization problem have exponential in $d$ number of constraints, it is known that in such cases the combinatorial structure of the constraints becomes the determinant of the complexity of the problem. Using the ellipsoid method of \citet{Khachiyan} (see also Chapter 3 of \citet{Grotschel1993}) we show that this combinatorial structure is related to the intractable max-cut problem on an undirected graph with arbitrary weights. This not only suggests the lack of polynomial time algorithms for the TDM realization problem but also suggests that a way to identify sub-classes of TDMs for which the realization problem is polynomial time solvable is to find those that correspond to problems where the resulting separation problem is  polynomial time solvable. In section 4, we show that the size of the LP formulation can be dramatically reduced in some parametric classes of TDMs by exploiting the inherent symmetry and the presence of zero elements (pairwise tail independence) in the parametrization. In Example \ref{ex:equi-correlation}, we present a parametric class in the related BCM realization problem which has two parameters, the facet representation growing linearly with $d$ and the realization problem solvable in constant time - its analysis uses the presence of significant symmetry in the parametrization. This represents a super class of those considered in \citet{Embrechts2016} and \citet{Krause2018}. In Proposition \ref{MA2}, we present a two-parameter subclass of TDMs where the constraint polytope remains the same for $d\geq 6$, and hence the realization problem is solvable in constant time - its analysis use the fact that there is only a linear number of non-zero elements. Finally, we present a three parameter class where the symmetry is used to get a polynomial time LP formulation for the realization problem. In section 5, we present a practically significant enhancement of the algorithm of \citet{Krause2018} by replacing the solution of a NP-hard problem by a continuous relaxation which is in P. Also, we present numerical results confirming the advantage of using LP formulations that employ a non-trivial objective function as that in \citet{Krause2018}. Finally, we make some comments, supported by computational results, contrasting the general approach to membership testing as represented by the algorithms of \citet{Lee1993} and \citet{Krause2018} with a customized approach of this paper. 


{\bf Notation:}  All vectors and matrices are boldfaced, with matrices in uppercase; we may denote their dimensions by using subscripts, especially when it is not clear from the context. We denote by $\mathbf{I}$ the identity matrix. By $\mathbf{1}$ and $\mathbf{J}$ we denote vectors and matrices  with all elements equal to $1$, respectively.  Also, by $\mathbf{0}$ we denote vectors and matrices with all of the elements equal to $0$. Our canonical probability space, denoted by  $([0,1],\mathcal{L},\lambda)$, is one with $\mathcal{L}$ being the set of Lebesgue measurable subsets of $[0,1]$ and $\lambda$ representing the Lebesgue measure on $[0,1]$. We denote the floor function by $\lfloor \cdot \rfloor$, the ceiling function by $\lceil \cdot \rceil$, and the indicator function of a set $A$ by $I_A$. 

\section{An Alternate Viewpoint of BCMs}
In this section our goal is to show that a more probabilistic viewpoint of $\calB_d$ can be quite fruitful in deriving both necessary and sufficient conditions on the parameters for instances in a parametric family to belong to $\calB_d$ or $\calT_d$. We begin with the following result which equates the problem of determining whether a given matrix $\mathbf{T}$ is in $\calT_d$ to that of verifying the membership of $(1/d)\mathbf{T}$ in $\calB_d$. This is essentially a refinement of the result that $\calT_{d}=\calB^{\mathsf{I}}_d$ established in \citet{Embrechts2016}. We state it without proof since it is observed in Remark 17 of \citet{Fiebig2017}, and is Theorem 1 of \citet{Krause2018}. 

\begin{theorem} \label{BCTD}
Let $\mathbf{T}$ be any $d \times d$ matrix with unit diagonal elements. Then $\mathbf{T}\in\calT_d$ if and only if $(1/d)\mathbf{T}\in\calB_d$.
\end{theorem}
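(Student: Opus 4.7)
My plan is to handle the two directions separately, with essentially all of the work falling on the ``only if'' direction. For the ``if'' direction, suppose $(1/d)\mathbf{T}\in\calB_d$; since $\mathbf{T}$ has unit diagonal, the scalar $p=1/d\in(0,1]$ witnesses $p\mathbf{T}\in\calB_d$, so $\mathbf{T}\in\calB^{\mathsf{I}}_d=\calT_d$ by the identity $\calT_d=\calB^{\mathsf{I}}_d$ of Theorem~3.3 of \citet{Embrechts2016}. This part is immediate.

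For the ``only if'' direction, I would start from $\mathbf{T}\in\calT_d$ and fix a vector $(U_1,\ldots,U_d)$ of standard uniforms whose copula realizes the pairwise tail dependencies, that is $\chi(U_i,U_j)=T_{ij}$ for all $i,j$. For each level $u\in(0,1)$, the indicator vector $\mathbf{Y}^{(u)}=(I\{U_i\leq u\})_{i=1}^d$ lives in $\{0,1\}^d$, so $\mathbf{B}^{(u)}:=\E{\mathbf{Y}^{(u)}(\mathbf{Y}^{(u)})^{\mathsf{T}}}$ is a BCM with diagonal $u\mathbf{1}$ and $(i,j)$-off-diagonal entry $\Pr{U_i\leq u,U_j\leq u}$. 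The natural candidate to push to the limit is the rescaled matrix $\mathbf{M}^{(u)}:=\mathbf{B}^{(u)}/(du)$, whose diagonal is identically $1/d$ and whose off-diagonals tend to $T_{ij}/d$ as $u\downarrow 0$ by the very definition of $\chi$. Since $\calB_d$ is the convex hull of the finitely many matrices $\{\mathbf{1}_S\mathbf{1}_S^{\mathsf{T}}:S\subseteq[d]\}$ and is therefore compact, the whole argument reduces to checking $\mathbf{M}^{(u)}\in\calB_d$ for every $u$ and then passing to the limit.

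The heart of the proof, and the step I expect to be the main obstacle, is precisely the claim $\mathbf{M}^{(u)}\in\calB_d$. The scaling factor $1/(du)$ exceeds one whenever $u<1/d$, so this is not a routine consequence of a convex combination with $\mathbf{0}$. My plan to handle it is to use the extremal decomposition $\mathbf{B}^{(u)}=\sum_{S\subseteq[d]}p_S^{(u)}\mathbf{1}_S\mathbf{1}_S^{\mathsf{T}}$ with $p_S^{(u)}=\Pr{\mathbf{Y}^{(u)}=\mathbf{1}_S}$, together with the observation that $\mathbf{1}_\emptyset\mathbf{1}_\emptyset^{\mathsf{T}}=\mathbf{0}$, so only the non-empty $S$ contribute to the matrix sum. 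The elementary union bound then gives $\sum_{S\neq\emptyset}p_S^{(u)}=\Pr{\exists i:\,U_i\leq u}\leq du$, so after dividing by $du$ the rescaled non-empty weights still sum to at most one; absorbing the slack as additional mass at $S=\emptyset$ yields a legitimate convex-combination representation of $\mathbf{M}^{(u)}$, hence $\mathbf{M}^{(u)}\in\calB_d$. Combined with the entry-wise convergence $\mathbf{M}^{(u)}\to(1/d)\mathbf{T}$ and the closedness of $\calB_d$, this completes the argument. A pleasant by-product is that the approach relies only on the defining limit of $\chi$ and a union bound, and sidesteps any appeal to max-stable or spectral representations.
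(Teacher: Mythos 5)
Your proof is correct. Note that the paper itself states this theorem without proof, deferring to Remark~17 of \citet{Fiebig2017} and Theorem~1 of \citet{Krause2018}, so there is no in-paper argument to compare against; what you have written is essentially the standard argument used in those references to sharpen $\calT_d=\calB^{\mathsf{I}}_d$ to the explicit constant $p=1/d$. Both halves check out: the ``if'' direction is an immediate application of the definition of $\calB^{\mathsf{I}}_d$ together with the cited identity $\calT_d=\calB^{\mathsf{I}}_d$ (no circularity, since that identity is an independently established, weaker statement), and in the ``only if'' direction you correctly isolate the one nontrivial point --- that the rescaling factor $1/(du)$ exceeds one for small $u$ --- and resolve it by discarding the zero vertex $\mathbf{1}_\emptyset\mathbf{1}_\emptyset^{\mathsf{T}}$ from the extremal decomposition of $\mathbf{B}^{(u)}$ and invoking the union bound $\Pr{\exists i:\, U_i\leq u}\leq du$ to keep the rescaled weights summing to at most one. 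Entrywise convergence of the off-diagonals to $T_{ij}/d$ is exactly the definition of $\chi$ (the limits exist because $\mathbf{T}\in\calT_d$ presupposes a realizing vector), the diagonal is identically $1/d$, and compactness of $\calB_d$ as the convex hull of the finite set $\{\mathbf{x}\mathbf{x}^{\mathsf{T}}:\mathbf{x}\in\{0,1\}^d\}$ lets you pass to the limit. No gaps.
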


Interestingly, some parametrizations of matrices are much easier to analyze for the TDM realization problem in contrast to the BCM realization problem. Below is one such parametric class of matrices, suggested by the equi-correlation matrices.

\begin{example}\label{ex:equi-correlation}
Consider a matrix $\mathbf{B}_{d \times d}$ of the form
	\begin{eqnarray}\label{equi-corr-mat}
		\begin{bmatrix}
			\alpha &\beta & \cdots & \beta \\
			\beta &\alpha & \cdots & \beta \\
			\vdots & \vdots &\ddots &\vdots \\
			\beta &\beta & \cdots & \alpha
		\end{bmatrix} =(\alpha-\beta) \mathbf{I}_{d}+ \beta \mathbf{J}_{d},
	\end{eqnarray}
 where $ 0 \leq  \beta\leq \alpha\leq 1 $. It is of interest to derive necessary and sufficient conditions on $\alpha$ and $\beta$ for the matrix to be a BCM. Note that since the matrix $\alpha \mathbf{J}_{d\times d}$ is a BCM (equals $\E{\mathbf{X}\mathbf{X}^\mathsf{T}}$ with $X_1=X_2=\cdots=X_d$ and $\E{X_1}=\alpha$), and the class of all BCMs is convex, the necessary and sufficient conditions reduce to finding for each $\alpha\in[0,1]$, a lower bound for such values of $\beta$, which we denote by $\beta_l(\alpha)$.     
 
 We note that for TDMs of this parametric form, in view of Theorem \ref{BCTD}, we need necessary and sufficient conditions only for the sub-class of matrices where $\alpha=1/d$, or in view of the above, only $\beta_l(1/d)$. We note that $\beta_l(1/d)=0$ by taking $(X_1,\ldots,X_d)$ to be a $d$-dimensional multinomial random vector with $n=1$ and equal probabilities of $1/d$ for the $d$ outcomes. This, in other words, says that all matrices with unit diagonal and with constant off-diagonal elements are TDMs \iff the off-diagonal elements are in $[0,1]$. An alternate argument is presented in Proposition 4.1 (i) of \citet{Embrechts2016}. 
 
In \citet{Chaganty2006} they study Bernoulli vectors with constant correlation matrices, and in this connection they establish bounds for this constant correlation coefficient. Their parametrization coincides with the above only in the exchangeable case, but their generality restricts them to dimensions up to three. In \citet{Krause2018}, their Problem Classes 1 and 2 are alternately parametrized sub-classes of the above class. For their Problem Class 1, $\beta=\kappa\alpha^2$, and hence their parametrization is in terms of $(\alpha,\kappa)$ in $[0,1]^2$; they computationally graph in their Figure 5 the region of their parameter space for dimensions in $\{3,6,9,12\}$. 
Later, in section $4$, we derive the following expression for $\beta_l(\cdot)$:
\begin{eqnarray*}
		 \beta_l(\alpha)=\frac{ \left(2\alpha d- \lfloor  \alpha d \rfloor-1\right) \lfloor  \alpha d \rfloor}{d(d-1)}. 
	\end{eqnarray*}
\hfill $\qed$
\end{example}

Now we present a rather simple but an alternate viewpoint of matrices in $\calB_d$. Let $\mathbf{B}\in\calB_d$, and $\mathbf{X}=(X_1,...,X_d)$ be a random vector taking values in $\{0,1\}^d$ such that $\mathbf{B}=\E{\mathbf{X}\mathbf{X}^\mathsf{T}}$. Defining the events $A_i$ by $A_i:=\{X_i=1\}$, for $i=1,\ldots d$, we see that 
\begin{eqnarray}\label{BCprob}
	\mathbf{B}=\begin{bmatrix}
			\Pr{A_1}   & \Pr{A_1\cap A_2} & \cdots & \Pr{A_1\cap A_d}  \\
			\Pr{A_1\cap A_2} & \Pr{A_2}   & \cdots & \Pr{A_2\cap A_d}  \\
			\vdots      & \vdots     & \ddots &  \vdots \\
			\Pr{A_1\cap A_d} & \Pr{A_2\cap A_d} & \cdots & \Pr{A_d}  \\
		\end{bmatrix}.
\end{eqnarray}
This immediately leads one to think of the problem of verifying membership in $\calB_d$ in terms of the existence of events $A_i$, $i=1,\ldots, d$, on a probability space with themselves and their binary intersections having the specified probabilities. Also, importantly it makes clear that the structure of the matrix is quite intricate and central to the problem of verifying membership in $\calB_d$. We encapsulate this idea below, as a corollary of Theorem \ref{BCTD}, which is also established in Lemma 16 of \citet{Fiebig2017}. 

\begin{corollary}\label{BCprobcor}
Let $\mathbf{T}$ denote a $d \times d$ matrix with unit diagonal elements and $\mathbf{B}$ a $d \times d$ matrix. Then we have the following:
\begin{enumerate}
\item $\mathbf{T}$ is a TDM \iff on a probability space there exists events $A_i$, $i=1,\ldots, d$, such that the matrix given in \eqref{BCprob} equals $(1/d)\mathbf{T}$. 
\item $\mathbf{B}$ is a BCM \iff on a probability space there exists events $A_i$, $i=1,\ldots, d$, such that the matrix given in \eqref{BCprob} equals $\mathbf{B}$. 
\end{enumerate}
\end{corollary}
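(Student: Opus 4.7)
The plan is to prove part (2) directly by exploiting the bijection between $\{0,1\}$-valued random variables and indicators of events, and then deduce part (1) as an immediate consequence of Theorem \ref{BCTD}.

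For the forward direction of part (2), I would start with $\mathbf{B}\in\calB_d$, so that $\mathbf{B}=\E{\mathbf{X}\mathbf{X}^\mathsf{T}}$ for some random vector $\mathbf{X}=(X_1,\ldots,X_d)$ taking values in $\{0,1\}^d$. On that underlying probability space I define $A_i := \{X_i=1\}$. Since $X_i$ is $\{0,1\}$-valued, $X_i = I_{A_i}$ almost surely, and hence $X_i X_j = I_{A_i}I_{A_j} = I_{A_i\cap A_j}$. Taking expectations gives the diagonal entry $\E{X_i^2}=\E{X_i}=\Pr{A_i}$ and the off-diagonal entry $\E{X_iX_j}=\Pr{A_i\cap A_j}$, which is exactly the matrix displayed in \eqref{BCprob}.

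For the converse, given events $A_1,\ldots,A_d$ on some probability space with the specified probabilities of themselves and their pairwise intersections, I would simply set $X_i := I_{A_i}$. Then $\mathbf{X}\in\{0,1\}^d$ almost surely, so by the definition of $\calB_d$ the matrix $\E{\mathbf{X}\mathbf{X}^\mathsf{T}}$ is a BCM; and by the same indicator identity as above, this expectation matches \eqref{BCprob}, which equals $\mathbf{B}$ by hypothesis. This completes part (2).

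Part (1) then follows immediately by chaining this equivalence with Theorem \ref{BCTD}: $\mathbf{T}\in\calT_d$ iff $(1/d)\mathbf{T}\in\calB_d$ iff (by part (2) applied to $\mathbf{B}=(1/d)\mathbf{T}$) there exist events $A_1,\ldots,A_d$ whose marginal and pairwise intersection probabilities form the matrix $(1/d)\mathbf{T}$. There is no real obstacle here; the statement is essentially a definitional rewriting, and the only thing to be careful about is verifying that the diagonal condition $\Pr{A_i}=1/d$ (in part (1)) or $\Pr{A_i}=B_{ii}$ (in part (2)) is implicit in both formulations, which is transparent from the identity $X_i^2=X_i$ for $\{0,1\}$-valued $X_i$.
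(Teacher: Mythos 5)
Your argument is correct and follows essentially the same route as the paper: the paper derives the corollary from the identification $A_i=\{X_i=1\}$ (equivalently $X_i=I_{A_i}$) giving the representation \eqref{BCprob} for matrices in $\calB_d$, and then obtains the TDM statement by invoking Theorem \ref{BCTD}. Your explicit verification of the indicator identities and of the diagonal entries is exactly the content the paper leaves as an observation preceding the corollary.
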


The following two propositions show that the above viewpoint can lead to simpler arguments in problems of verifying membership of matrices in $\calB_d$. Proposition \ref{ex:Embrechts1} below, generalizes Proposition 4.7 of \citet{Embrechts2016} which deals with their motivating example. 
In Proposition 4.1 of \citet{Embrechts2016}, they show that TDMs with certain parameterizations borrowed from that for correlation matrices generate valid TDMs under certain restrictions on the parameters. In Proposition \ref{Toep}, we show that a certain subset of the class of Toeplitz matrices is in $\calT_d$; this subset includes, as special cases, the parameterizations considered in Proposition 4.1 of \citet{Embrechts2016}.

\begin{proposition}\label{ex:Embrechts1}
A matrix $\mathbf{B}_{d\times d}$ of the form 
\begin{eqnarray*}
\begin{bmatrix}
	\beta_1	&	0	&	\cdots	&	0	&	\alpha_1 \\
	0	&	\beta_2	&	\cdots	&	0	&	\alpha_2 \\
	\vdots	&	\vdots	&	\ddots	&	\vdots & \vdots \\
	0	&	0	&	\cdots	&	\beta_{d-1}	&	\alpha_{d-1} \\
	\alpha_1	&	\alpha_2	&	\cdots	&	\alpha_{d-1}	&	\beta_d\\ 
\end{bmatrix}
\end{eqnarray*}
is in $\calB_d$ if and only if $0 \leq \alpha_i\leq \beta_i$, for $i=1,\ldots, d-1$,  $\sum_{i=1}^{d-1} \alpha_i\leq \beta_d$, and $$\sum_{i=1}^{d} \beta_i-\sum_{i=1}^{d-1} \alpha_i\leq 1.$$
\end{proposition}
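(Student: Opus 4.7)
The plan is to invoke Corollary \ref{BCprobcor}(2) to reformulate membership in $\calB_d$ as the existence of events $A_1,\ldots,A_d$ on some probability space with $\Pr{A_i}=\beta_i$, $\Pr{A_i\cap A_d}=\alpha_i$ for $i=1,\ldots,d-1$, and $\Pr{A_i\cap A_j}=0$ for $1\leq i<j\leq d-1$. This last family of vanishing intersection probabilities is the crucial structural constraint: it forces $A_1,\ldots,A_{d-1}$ to be (essentially) pairwise disjoint, and the whole argument hinges on this.

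For necessity, suppose such events exist. Non-negativity of probability and monotonicity $\Pr{A_i\cap A_d}\leq \Pr{A_i}$ immediately give $0\leq \alpha_i\leq \beta_i$. Since $A_1,\ldots,A_{d-1}$ are pairwise disjoint, the events $A_i\cap A_d$, $i=1,\ldots,d-1$, are pairwise disjoint subsets of $A_d$, so $\sum_{i=1}^{d-1}\alpha_i\leq \Pr{A_d}=\beta_d$. Finally, applying additivity to the disjoint union and then union bound,
\begin{eqnarray*}
1\geq \Pr{\bigcup_{i=1}^{d}A_i}=\Pr{\bigcup_{i=1}^{d-1}A_i}+\Pr{A_d}-\Pr{A_d\cap\bigcup_{i=1}^{d-1}A_i}=\sum_{i=1}^{d}\beta_i-\sum_{i=1}^{d-1}\alpha_i,
\end{eqnarray*}
which is the third inequality.

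For sufficiency, I would exhibit the events explicitly on the canonical space $([0,1],\mathcal{L},\lambda)$. Set $s_0=0$ and $s_k=\sum_{i=1}^{k}\beta_i$ for $k=1,\ldots,d-1$, and take $A_i=[s_{i-1},s_{i-1}+\beta_i)$ for $i=1,\ldots,d-1$, which are automatically disjoint and satisfy $\lambda(A_i)=\beta_i$. Using $0\leq \alpha_i\leq \beta_i$, pick $A_d$ to consist of the initial sub-interval $[s_{i-1},s_{i-1}+\alpha_i)$ of each $A_i$ together with a disjoint interval $[s_{d-1},s_{d-1}+\beta_d-\sum_{i=1}^{d-1}\alpha_i)$ placed to the right of $A_{d-1}$. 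The condition $\sum_{i=1}^{d-1}\alpha_i\leq \beta_d$ makes the latter interval well defined, and the condition $\sum_{i=1}^{d}\beta_i-\sum_{i=1}^{d-1}\alpha_i\leq 1$ ensures it fits inside $[0,1]$. By construction $\lambda(A_d)=\beta_d$ and $\lambda(A_i\cap A_d)=\alpha_i$, completing the verification.

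There is no real obstacle in this argument; the only mild care needed is to arrange the pieces of $A_d$ so that they stay inside $[0,1]$, which is precisely what the third inequality encodes. In fact, all three parameter inequalities arise naturally as exactly the feasibility conditions of the explicit construction, which makes necessity and sufficiency two sides of the same elementary bookkeeping.
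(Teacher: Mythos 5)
Your proof is correct and follows essentially the same route as the paper: necessity via the probabilistic reformulation of Corollary \ref{BCprobcor} together with the (essential) pairwise disjointness forced by $\Pr{A_i\cap A_j}=0$, and sufficiency via an explicit interval construction on $([0,1],\mathcal{L},\lambda)$. The only difference is cosmetic: the paper makes $A_d=(1-\beta_d,1)$ a single interval and builds each $A_i$, $i<d$, as a union of two pieces, whereas you make each $A_i$, $i<d$, an interval and let $A_d$ be the union of pieces; the bookkeeping is identical.
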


\begin{proof}
 First, we show the only if part. Using the notation presented in (\ref{BCprob}), we see that the sets $A_i$, for $i=1,\ldots,d$, corresponding to $\mathbf{B}$ satisfy 
\begin{alignat*}{2}
&\Pr{A_i}=\beta_i,&	&\quad \hbox{for }	i=1,\ldots, d; \\
&\Pr{A_j \cap A_d}=\alpha_j,&	&\quad\hbox{for }	j=1, \ldots, d-1;  \\
&\Pr{A_i \cap A_j}=0,&	&\quad\hbox{for }	1 \leq i<j \leq d-1.
\end{alignat*}
By using the above relations, we have from axioms of probability that if $\mathbf{B}\in\calB_d$ then 
$0 \leq \alpha_i\leq \beta_i$, for $i=1,\ldots,d-1$, 
\[
\sum_{i=1}^{d-1} \alpha_i= \sum_{i=1}^{d-1} \Pr{A_i \cap A_d} \leq \Pr{A_d}= \beta_d
\]
and 
\[
\sum_{i=1}^{d} \beta_i-\sum_{i=1}^{d-1} \alpha_i= \sum_{i=1}^{d} \Pr{A_i}  -\sum_{i=1}^{d-1} \Pr{A_i \cap A_d}= \sum_{i=1}^{d-1} \Pr{A_i \cap A_d^\mathsf{c}} + \Pr{A_d}\leq 1.
\]

For the if part, let $B_i$, for $i=1,\ldots,d-1$, be disjoint sub-intervals of $(1-\beta_d,1)$ with lengths $\alpha_i$ on the probability space $([0,1],\mathcal{L},\lambda)$. This can be done as $\alpha_i\geq 0$ for ${i=1,\ldots,d-1}$, $\sum_{i=1}^{d-1} \alpha_i\leq \beta_d$ and $\sum_{i=1}^{d} \beta_i-\sum_{i=1}^{d-1} \alpha_i\leq 1$. Also, let $C_i$ be the intervals $$\left(\sum_{j=1}^{i-1}\beta_j-\sum_{j=1}^{i-1}\alpha_j, \sum_{j=1}^{i}\beta_j-\sum_{j=1}^{i}\alpha_j \right), \quad \hbox{for } i=1,\ldots, d-1;$$ this is possible because of $0 \leq \alpha_i\leq \beta_i$, for $i=1,\ldots, d-1$. Now by defining $$ A_i:=B_i\cup C_i, \hbox{for } {i=1,\ldots, d-1},$$ and ${A_d:=(1-\beta_d,1)}$, we immediately see that the matrix given in \eqref{BCprob} equals $\mathbf{B}$; whence $\mathbf{B}\in\calB_d$. 
\end{proof}

\begin{remark}
If we instead consider TDMs of the above parametric form, {\it i.e.} a matrix $\mathbf{T}_{d\times d}$ of the form 
\begin{eqnarray*}
\begin{bmatrix}
	1	&	0	&	\cdots	&	0	&	\alpha_1 \\
	0	&	1	&	\cdots	&	0	&	\alpha_2 \\
	\vdots	&	\vdots	&	\ddots	&	\vdots & \vdots \\
	0	&	0	&	\cdots	&	1	&	\alpha_{d-1} \\
	\alpha_1	&	\alpha_2	&	\cdots	&	\alpha_{d-1}	&	1\\ 
\end{bmatrix},
\end{eqnarray*}
then the above proposition along with Theorem \ref{BCTD} implies that $\mathbf{T}\in\calT_d$ if and only if $\alpha_i\geq 0$, for $i=1,\ldots, d-1$, and $\sum_{i=1}^{d-1} \alpha_i\leq 1$.
\end{remark}

The next class of TDM matrices is motivated by looking at the $d$-dimensional marginal of a stationary process. Since our interest is only on the TDM, we require only the invariance of the $d$-dimensional TDM with respect to shifts. The {$d$-dimensional} TDMs for such processes is a Toeplitz matrix, and in the following we will denote a general such $d$-dimensional Toeplitz matrix by
\begin{equation}
\begin{bmatrix}
1 & \alpha_1 & \alpha_2 & \cdots & \alpha_{d-2} & \alpha_{d-1} \\
\alpha_1 & 1 & \alpha_1 & \ddots &  & \alpha_{d-2} \\
\alpha_2 & \alpha_1 & \ddots & \ddots & \ddots& \vdots \\
\vdots & \ddots &\ddots & \ddots & \alpha_1 &\alpha_2 \\
\alpha_{d-2} &  &\ddots & \alpha_1 & 1 & \alpha_1\\
\alpha_{d-1} & \alpha_{d-2} & \cdots &\alpha_2 & \alpha_1 &1
\end{bmatrix}. \label{Toep_matrix}
\end{equation}
It is natural then to seek conditions on $\alpha_i$, $i=1,\ldots,d-1$, under which a given matrix of the form \eqref{Toep_matrix} is a TDM; Proposition \ref{Toep} below provides a set of sufficient conditions. It is noteworthy that not all positive definite Toeplitz matrices are TDMs. Consider the positive definite matrix given by  
\[
\frac{1}{3} \begin{bmatrix}
3 & 2 & 0\\
2 & 3 & 2 \\
0 & 2 & 3 
\end{bmatrix}.
\]
Assuming that the above matrix is a TDM, Corollary \ref{BCprobcor} implies that we have three events $A_1, A_2$ and $A_3$ with $\Pr{A_1}=\Pr{A_2}=\Pr{A_3}=1/3$ and satisfying 
\[
\Pr{A_1 \cap A_2}=\Pr{A_2 \cap A_3}=2/9, \hbox{ and } \Pr{A_1 \cap A_3}=0.
\]
However, this implies that 
\[
\Pr{A_2} \geq \Pr{A_1 \cap A_2}+\Pr{A_2 \cap A_3} - \Pr{A_1\cap A_2 \cap A_3}=4/9,
\]
a contradiction. Hence the above matrix is not a TDM.

\begin{proposition} \label{Toep}
A Toeplitz matrix parametrized as given in \eqref{Toep_matrix} is a $d$-dimensional TDM if 
\begin{eqnarray}\label{Toepcond}
\begin{cases}
\alpha_{d-1} \geq 0; \\
1-\alpha_1 \geq \alpha_1-\alpha_2 \geq \cdots \geq \alpha_{d-2}-\alpha_{d-1}\geq 0.
\end{cases}
\end{eqnarray}
\end{proposition}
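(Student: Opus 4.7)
By Theorem~\ref{BCTD} and Corollary~\ref{BCprobcor}, the task reduces to constructing events $A_1, \ldots, A_d$ on some probability space such that $\Pr{A_i}=1/d$ for every $i$ and $\Pr{A_i \cap A_j}=\alpha_{|i-j|}/d$ for $i \neq j$. Setting $\alpha_0:=1$, condition~\eqref{Toepcond} amounts to saying that the sequence $\alpha_0, \alpha_1, \ldots, \alpha_{d-1}$ is non-negative, non-increasing, and concave (in the sense that its consecutive differences $\alpha_{k-1}-\alpha_k$ are themselves non-negative and non-increasing in $k$).

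My plan is to represent such a concave, non-increasing sequence as a convex combination of a constant sequence and ``triangular'' sequences $k \mapsto \max(0, 1-k/l)$, and then to realize each piece by a sliding-window construction. Concretely, I claim that
\begin{equation*}
\alpha_k \;=\; p_\infty \;+\; \sum_{l=1}^{d-1} p_l\, \max\!\left(0, \; 1-\tfrac{k}{l}\right), \qquad k=0,1,\ldots,d-1,
\end{equation*}
where $p_\infty := \alpha_{d-1}$, $p_l := l\bigl[(\alpha_{l-1}-\alpha_l)-(\alpha_l-\alpha_{l+1})\bigr]$ for $1 \leq l \leq d-2$, and $p_{d-1} := (d-1)(\alpha_{d-2}-\alpha_{d-1})$. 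The concavity and monotonicity hypotheses are exactly what make each $p_l$ non-negative, and a short telescoping computation verifies both the identity above and that $p_\infty + \sum_{l=1}^{d-1}p_l = 1$.

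Given the decomposition I would build the events on a space carrying a random variable $W$ valued in $\{1,\ldots,d-1\} \cup \{\infty\}$ with $\Pr{W=l}=p_l$ and $\Pr{W=\infty}=p_\infty$, together with an independent $U \sim U(0,1)$. On the event $\{W=l\}$ with $l$ finite, define
\[
A_i \;:=\; \Bigl\{\, U \in \bigl[\tfrac{i-1}{ld},\; \tfrac{i-1}{ld}+\tfrac{1}{d}\bigr)\Bigr\}, \qquad i=1,\ldots,d,
\]
so that the $A_i$'s are intervals of width $1/d$ with successive shifts of $1/(ld)$; on $\{W=\infty\}$ set $A_i := \{U \leq 1/d\}$ for every $i$. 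Feasibility of the construction on $\{W=l\}$ needs the last endpoint $(d-1+l)/(ld)$ to lie in $[0,1]$, which holds precisely because $l \geq 1$.

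The verification is then mechanical: conditionally on $W=l$ two of these intervals differing in index by $|i-j|$ overlap in a set of length $\max\bigl(0, \, 1/d - |i-j|/(ld)\bigr) = \max(0,1-|i-j|/l)/d$, while conditionally on $W=\infty$ both $\Pr{A_i \mid W}$ and $\Pr{A_i \cap A_j \mid W}$ equal $1/d$. Averaging over $W$ via the convex-combination decomposition returns $\Pr{A_i}=1/d$ and $\Pr{A_i \cap A_j}=\alpha_{|i-j|}/d$, giving the claim via Corollary~\ref{BCprobcor}. The principal conceptual step is the decomposition into elementary concave pieces; once it is in hand, the sliding-window realization of each triangular piece together with the degenerate realization of the constant piece produces the desired matrix directly.
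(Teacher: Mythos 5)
Your proof is correct, but it takes a genuinely different route from the paper's. The paper argues directly: on $([0,1],\mathcal{L},\lambda)$ it builds $A_i$ as the union of an initial interval $[0,\alpha_{i-1}/d)$ together with slivers of lengths $(\alpha_{i-j}-\alpha_{i-j+1})/d$ anchored at the left endpoints of the blocks $[(j-1)/d,j/d)$; because the differences $\alpha_{k-1}-\alpha_k$ are non-increasing, the slivers belonging to different $A_i$'s within a block are nested, and the pairwise intersections telescope to $\alpha_{|i-j|}/d$, so one explicit deterministic set system does the whole job. You instead decompose the non-increasing, ``concave'' sequence $(\alpha_k)$ as a convex combination of the constant sequence and the tent sequences $k\mapsto\max(0,1-k/l)$ — your weights $p_\infty=\alpha_{d-1}$, $p_l=l\bigl[(\alpha_{l-1}-\alpha_l)-(\alpha_l-\alpha_{l+1})\bigr]$ and $p_{d-1}=(d-1)(\alpha_{d-2}-\alpha_{d-1})$ are indeed non-negative exactly under \eqref{Toepcond}, sum to one, and reproduce $\alpha_k$ by the telescoping you indicate — and then realize each tent by a sliding-window interval construction and the constant piece by comonotone events, mixing over an independent latent variable $W$; this is in effect an appeal to the convexity of $\calB_d$ before invoking Corollary \ref{BCprobcor} and Theorem \ref{BCTD}. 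The paper's construction is shorter to verify and needs no randomization; yours is slightly longer but more structural: it exhibits the sequences satisfying \eqref{Toepcond} as the convex hull of sliding-window (moving-window stationary) TDMs plus the comonotone one, which both explains where the second-difference condition comes from and connects naturally to the moving-maxima/max-stable constructions the paper touches on in its remarks.
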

\begin{proof}
By Theorem \ref{BCTD}, we need to prove that $(1/d)\mathbf{T} \in \mathcal{B}_d$. For convenience, we define $\alpha_0=1$. Let the probability space be $([0,1],\mathcal{L},\lambda)$. Let $B_{i,1}$ be the intervals $[0,\alpha_{i-1}/d)$, $i=1,...,d$. Also, for $i=2,...d$, let $B_{i,j}$ be the interval $$[(j-1)/d,(j-1+\alpha_{i-j}-\alpha_{i-j+1})/d],\quad j=2,\ldots, i.$$ Now defining $A_i=\cup_{j=1}^i B_{i,j}$, for $i=1,\ldots, d$, we see that the matrix given in \eqref{BCprob} equals $(1/d)\mathbf{T}$; whence $(1/d)\mathbf{T}\in\calB_d$. 
\end{proof}


In \citet{Embrechts2016}, they consider three parametric classes suggested by the equi-corrleation matrices, stationary AR(1) autocorrelation matrices, and stationary $1$-dependent autocorrelation matrices. The uni-parameter class suggested by equi-correlation matrices consists of matrices of the form given in \eqref{equi-corr-mat} with $\alpha=1$ and $\beta\geq 0$. This class clearly is a subset of Toeplitz matrices which satisfy \eqref{Toepcond} if $\beta\in[0,1]$. A stationary AR(1) process with parameter $\phi$ has a geometrically decaying autocorrelation, which suggests the Toeplitz parametric class of the form \eqref{Toep_matrix} with 
\[
\alpha_k=\phi^{k}, \quad k=1,\ldots,d-1.
\]
It is easily checked that \eqref{Toepcond} is satisfied for such matrices with $\phi\in[0,1]$, and hence they are TDMs as well. Finally, in the case of a stationary $1$-dependent process, it is clear that the autocorrelation matrix of $d$ successive observations is a matrix of the form \eqref{Toep_matrix} with 
\[
\alpha_1=\alpha;\; \alpha_k=0, \quad k=2,\ldots,d-1.
\]
Such matrices trivially satisfy \eqref{Toepcond} \iff $0\leq \alpha\leq 1/2$. 

In \citet{Falk2005} (see Theorem 1.3 therein), it is proved that any $d\times d$ matrix $\mathbf{T}=(t_{ij})$ satisfying
\[
\text{diag}(\mathbf{T})=\mathbf{1} \; \text{ and } \; t_{kl}=2-(1+|a_l-a_k|)^{1/\theta},\quad  k \neq l \in \{1,\ldots, d \},
\]
for $\theta \geq 1$ and $0 \leq a_1 \leq \cdots \leq a_d \leq 1$ is a TDM of a random vector with margins arbitrarily chosen from among the reverse Weibull, Gumbell and Fr\'echet univariate distributions. Note that any such TDM is in the Toeplitz form \iff $a_i$, $i=1,\ldots,d$, form an arithmetic progression. If we let $a_i=a+b(i-1)$, then the conditions above require $a,b\geq 0$ and  $a+b(d-1)\leq 1$. The matrix $\mathbf{T}$ equals that in \eqref{Toep_matrix} with the off-diagonal elements
\[
\alpha_i=2-(1+bi)^{1/\theta}, \qquad i=1,\ldots, d-1.
\]
Since $0\leq b(d-1)\leq 1$, $\alpha_i$ is decreasing in $i$, $i=1,\ldots,d-1$, and all of these values are contained in $[0,1]$. By defining $\alpha_0=1$, we have $$\alpha_i - \alpha_{i+1}=(1+b(i+1))^{1/\theta} - (1+bi)^{1/\theta},\quad i=0,\ldots,d-2,$$ and $\alpha_i - \alpha_{i+1}$ is decreasing in $i$ as $(1+bx)^{1/\theta}$ is concave in $x$ for $\theta\geq 1$. Hence, our sufficient conditions in \eqref{Toepcond} are satisfied. The above examples suggest that the sufficient conditions in Proposition \ref{Toep} are broad enough to be quite useful in practice.

We note that a $d$-dimensional TDM of the form \eqref{Toep_matrix} cannot necessarily be extended beyond $d$ dimensions while maintaining the $d$-dependent structure. In other words, while for some particular values of $\alpha_1,\ldots,\alpha_{d-1}$  \eqref{Toep_matrix} could be a valid TDM, it does not guarantee that so will the following matrix for some $\alpha_d\in[0,1]$:
\[
\begin{bmatrix}
1 & \alpha_1 & \alpha_2 & \cdots & \alpha_{d-2} & \alpha_{d-1} & \alpha_d\\
\alpha_1 & 1 & \alpha_1 & \ddots &  & \alpha_{d-2} & \alpha_{d-1}\\
\alpha_2 & \alpha_1 & \ddots & \ddots & \ddots& \vdots & \vdots \\
\vdots & \ddots &\ddots & \ddots & \alpha_1 &\alpha_2 & \alpha_3 \\
\alpha_{d-2} &  &\ddots & \alpha_1 & 1 & \alpha_1  & \alpha_2\\
\alpha_{d-1} & \alpha_{d-2} & \cdots &\alpha_2 & \alpha_1 &1 & \alpha_1 \\
\alpha_d& \alpha_{d-1} & \alpha_{d-2} & \cdots &\alpha_2 & \alpha_1 &1  
\end{bmatrix}.
\]
This is observed in the two-dependence settings, as mentioned in Remark \ref{ma2-d<6} below. But it is worth mention that in the case that $\alpha_{d-1}\leq \alpha_{d-2}/2$, Proposition \ref{Toep} guarantees extension of the $d$-dependence structure to any dimension beyond $d$. 
\begin{remark}
We note that the conditions on the parameters given for the three parametric classes from \citet{Embrechts2016} considered above were independent of $d$. In the case of the parametric classes corresponding to the equi-correlation and AR(1) autocorrelation matrices, the stated conditions are clearly necessary as well. In contrast, as shown later in Proposition \ref{MA2} and Remark \ref{ma2-d<6}, in the 1-dependent case while $\alpha\in[0,1/2]$ is necessary and sufficient for $d\geq 3$, it is only sufficient when $d=2$.
\end{remark}

\begin{remark}
We note that akin to an AR(1) process, one could define a process $\{X_k\}_{k\in\mathbb{Z}}$ for $\phi\in[0,1)$ as specified by 
\[
X_k = \phi X_{k-1} \vee Z_k, \quad k\in \mathbb{Z},
\]
where $Z_k$, for $k\in\mathbb{Z}$, are i.i.d. unit Fr\'{e}chet random variables, with cumulative distribution function given by $\exp(-1/x)$, for $x\geq 0$. It is easily verified that the stationary marginal distribution is also unit Fr\'{e}chet with scale parameter $1/(1-\phi)$, and the tail dependence matrix coincides with the autocorrelation matrix of an AR(1) process. Since the maximum of $n$ copies of the process $X$ has the same finite dimensional distribution as $nX$, this process is a simple max-stable process, see \citet{Strokorb2013}. Finally, note that the existence of such a process is an alternate proof of the sufficiency of $\phi\in[0,1)$ for autocorrelation  matrices of AR(1) processes to be TDMs. 
\end{remark}

We note that the second condition in \eqref{Toepcond} is not necessary even in the presence of the monotonicity constraint $1\geq \alpha_1\geq \cdots \geq \alpha_{d-1}\geq 0$. This is seen by considering the matrix 
\[
\begingroup
\renewcommand*{\arraystretch}{1.2}
\begin{bmatrix}
1 & \alpha & \beta & 0\\
\alpha & 1 & \alpha & \beta \\
\beta & \alpha & 1& \alpha\\
0 & \beta & \alpha & 1
\end{bmatrix},
\endgroup
\]
which is shown later in Remark \ref{ma2-d<6} to be a TDM \iff $\alpha$ and $\beta$ satisfy 
\[
\alpha \geq 0,\; \beta \geq 0,\; \alpha+\beta \leq 1, \hbox{ and } 2 \alpha-\beta \leq 1. 
\]
In contrast, \eqref{Toepcond} requires $\alpha \geq 2 \beta$, which is not a necessary condition for it to be a TDM. That the conditions in \eqref{Toepcond} are not necessary is not surprising since the results in Corollary 3.5.8. of \citet{Strokorb2013} and \citet{Fiebig2017} suggest that the polytope of vectors $(\alpha_1,\ldots,\alpha_{d-1})$ which generate a TDM of the form \eqref{Toep_matrix} will have far more than linear in $d$ number of facets.

\section{A Related Combinatorial Optimization Problem}

We have seen in Theorem \ref{BCTD} that the realization problem of $\calT_d$ can be solved using any algorithm for the realization problem of $\calB_d$. Moreover, \citet{Pitowsky1991} established that the decision problem of membership in the correlation polytope is NP-complete; but as observed  in \citet{Fiebig2017} and Corollary \ref{BCprobcor} above, the decision of membership in $\calB_d$ is identical to the latter, and hence is also NP-complete. From this latter fact and Theorem  \ref{BCTD}, one can conclude that the decision problem of membership in $\calT_d$ is in NP. This is so as Carath\'eodory's theorem guarantees a quadratic in $d$ length vertex-representation for any Bernoulli matrix. Hence a  certificate  consists of a quadratic in $d$ number of weights $w_i$ and an equal number of $d$-vectors $\mathbf{v_i}$. This certificate is trivially polynomial-time verifiable as it involves checking membership of $w_i$ and $\mathbf{v_i}$ in $[0,1]$ and $\{0,1\}^d$, respectively, and moreover verifying that the given matrix equals $\sum w_i \mathbf{v_i}\mathbf{v_i}^\mathsf{T}$. But while one would expect because of the similarity of $\calT_d$ with $\calB_d$, and the exponentiality of the facet representation of $\calT_d$ that the decision problem of membership in $\calT_d$ is NP-hard as well, this has not been so far established. The main goal of this section is to make a connection via the ellipsoid algorithm to a combinatorial optimization problem which not only lends heuristic support to the NP-hardness of the general $\calT_d$ realization problem but, importantly, also suggests avenues of identifying subsets of $\calT_d$ such that the restricted membership problem is in P. For developing this connection we begin by defining the notation behind a linear programming formulation of the realization problem motivated by the vertex representation, as done in \citet{Lee1993}, and its dual. 

In the following, we find it convenient to alternately denote a set $A$ by $A^1$ and its complement by $A^0$. Let $\calA:=\{A_1,\ldots,A_d\}$, where $A_i$, for $i=1,\ldots, d$, are $d$ arbitrary events on a probability space $(\Omega, \mathcal{F},P)$ with $\mathbf{X}$ defined as $X_i:=I_{A_i}$, for $i=1,\ldots, d$. Let $q_{i_1,...,i_d}$, for $i_1,...,i_d \in \{0,1\}$, be defined as 
\begin{equation}\label{qdef1}
q_{i_1,...,i_d}:=\Pr{A_1^{i_1}\cap \cdots \cap A_d^{i_d}}=\Pr{\cap_{j=1}^d \{X_j=i_j\}} ,
\end{equation}
and we define the $2^d\times 1$ vector $\mathbf{q}_d$ (or $\mathbf{q}_d(\calA)$ to make the dependence on $\calA$ explicit) by arranging the elements in increasing order of $\sum_{j=1}^d i_j$ and reverse lexicographic order of $(i_1,\ldots,i_d)$. We will denote this latter order on $\{0,1\}^d$ by $\myord$. So for $d=3$, $\mathbf{q}_d$ is defined as 
\begin{equation}\label{qdef2}
(q_{0,0,0},q_{1,0,0},q_{0,1,0},q_{0,0,1},q_{1,1,0},q_{1,0,1},q_{0,1,1},q_{1,1,1})^{\mathsf{T}}.
\end{equation}
Probabilities of the sets in the partition generated by $\calA$ are the coordinates of $\mathbf{q}_d(\calA)$, and hence, in particular, their sum equals $1$. Let the sets in this partition, in the above ordering, be denoted by $A^\ast_i$, for $i=1,\ldots,2^d$; note that we allow for some of these sets to possibly be empty. In view of the representation in (\ref{BCprob}), we define the vector $\mathbf{p}_d$ (or $\mathbf{p}_d(\calA)$) as follows:
\begin{equation}\label{defn:pd}
\mathbf{p}_d:=\left(1,\Pr{A_1},\ldots,\Pr{A_d},\Pr{A_1\cap A_2},\Pr{A_1\cap A_3},\ldots,\Pr{A_{d-1}\cap A_d}\right)^{\mathsf{T}}. 
\end{equation}
For ease in presentation, we have introduced in $\mathbf{p}_d$ the unit first element. Also, Let the sets $A^{\ast\ast}_i$, for $i=1,\ldots,$ $d(d+1)/2+1$, be defined as $A^{\ast\ast}_1=\Omega,A^{\ast\ast}_2=A_1, A^{\ast\ast}_3=A_2,\ldots,A^{\ast\ast}_{d+1}=A_d,A^{\ast\ast}_{d+2}=A_1\cap A_2,\ldots$, so that the i-th element of $\mathbf{p}_d$ equals $\Pr{A^{\ast \ast}_i}$. 
 
The linear map $\mathbf{q}_d(\calA)\mapsto \mathbf{p}_d(\calA)$, a $\left(d(d+1)/2+1 \right)\times 2^d$ matrix denoted by  $\mathbf{C}_d=(c_{ij})$, can be defined independently of the specification of $\Omega,\calA$ - in other words, this linear map is solely a function of $d$.  Such a definition of $\mathbf{C}_d$ is given by  
\begin{eqnarray*}
{c}_{ij}=
\begin{cases}
			1, & A^\ast_{j}\subseteq A^{\ast\ast}_i;\\
			0, & \text{otherwise},
\end{cases}
\end{eqnarray*}	
where $A_i^\ast$ and $A_i^{\ast\ast}$ correspond to the following specification of $\Omega$ and $A_i$'s: $\Omega$ is taken to be $\{0,1\}^d$, and $A_i$ is taken to be $\{0,1\}^{i-1}\times \{1\} \times \{0,1\}^{d-i}$, for $i=1,\ldots,d$. As an example, the coefficient matrix $\mathbf{C}_3$ equals 
\[
\begin{bmatrix}
1 &1 &1&1&1&1&1&1\\
0&1&0&0&1&1&0&1\\
0&0&1&0&1&0&1&1\\
0&0&0&1&0&1&1&1\\
0&0&0&0&1&0&0&1\\
0&0&0&0&0&1&0&1\\
0&0&0&0&0&0&1&1\\
\end{bmatrix}.
\]
It is easy to check that the above defined $\mathbf{C}_d$ satisfies  $\mathbf{C}_d\mathbf{q}_d(\calA)=\mathbf{p}_d(\calA)$ for all specifications of $\calA$. The discussion above leads to the following theorem in which, similar to the definition of $\mathbf{p}_d(\calA)$ above, for any $d\times d$ matrix $\mathbf{B}=(b_{ij})$, we define $\mathbf{p}_d(\mathbf{B})$ as 
\begin{equation}\label{defn:pdB}
\mathbf{p}_d(\mathbf{B}):=\left(1,b_{1,1},\ldots,b_{d,d},b_{1,2},b_{1,3},\ldots,b_{d-1,d}\right)^{\mathsf{T}}. 
\end{equation}

\begin{theorem} \label{FABC}
For a matrix $\mathbf{B}_{d \times d}$ and $\mathbf{p}_d:=\mathbf{p}_d(\mathbf{B})$, the following statements are equivalent:
\begin{enumerate}
\item $\mathbf{B}_{d \times d}$ is in $\calB_d$ 
\item There exists a vector $\mathbf{x} \geq \mathbf{0}$ such that $\mathbf{C}_d\mathbf{x}=\mathbf{p}_d$
\item The optimal value of the following linear programming problem is non-negative:
\begin{equation}\label{LPProblem}
\begin{array}{ll@{}}
\text{minimize}  & \mathbf{p}_d^\mathsf{T}\mathbf{y};\\
\\
\text{subject to}& \mathbf{C}_d^\mathsf{T}\mathbf{y}\geq \mathbf{0}. \\
\end{array}
\end{equation}
\end{enumerate}
\end{theorem}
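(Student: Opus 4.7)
The plan is to prove the chain $(1) \Rightarrow (2) \Rightarrow (3) \Rightarrow (1)$, where the first equivalence relies on Corollary \ref{BCprobcor} and the second is a standard application of Farkas' lemma. The underlying conceptual point is that $\mathbf{C}_d$ is a purely combinatorial object, defined solely by inclusion of atoms in the events $A^{\ast\ast}_i$ on the canonical choice $\Omega=\{0,1\}^d$ with $A_i=\{0,1\}^{i-1}\times\{1\}\times\{0,1\}^{d-i}$. In particular the identity $\mathbf{C}_d \mathbf{q}_d(\calA)=\mathbf{p}_d(\calA)$ holds for \emph{every} choice of probability space and events $\calA$, since for any such $\calA$ the row-$i$ sum $(\mathbf{C}_d \mathbf{q}_d(\calA))_i=\sum_{A^\ast_j\subseteq A^{\ast\ast}_i}\Pr{A^\ast_j}$ collapses to $\Pr{A^{\ast\ast}_i}$ by finite additivity, which is the $i$-th entry of $\mathbf{p}_d(\calA)$.

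For $(1)\Leftrightarrow(2)$, I would invoke Corollary \ref{BCprobcor}(ii): $\mathbf{B}\in\calB_d$ is equivalent to the existence of events $A_1,\ldots,A_d$ whose pairwise-intersection matrix realizes $\mathbf{B}$. In the forward direction, set $\mathbf{x}:=\mathbf{q}_d(\calA)\geq \mathbf{0}$ and apply the combinatorial identity above to conclude $\mathbf{C}_d\mathbf{x}=\mathbf{p}_d(\mathbf{B})=\mathbf{p}_d$. In the reverse direction, given any $\mathbf{x}\geq\mathbf{0}$ with $\mathbf{C}_d\mathbf{x}=\mathbf{p}_d$, the first row of $\mathbf{C}_d$ (all ones) combined with the first entry of $\mathbf{p}_d$ being $1$ forces $\sum_k x_k=1$, so $\mathbf{x}$ defines a probability measure on $\{0,1\}^d$ indexed by the atoms $A^\ast_j$. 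The resulting events $A_1,\ldots,A_d$ then have pairwise-intersection probabilities matching $\mathbf{p}_d$, giving $\mathbf{B}\in\calB_d$ via Corollary \ref{BCprobcor}(ii).

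For $(2)\Leftrightarrow(3)$, observe that $\mathbf{y}=\mathbf{0}$ is always feasible for \eqref{LPProblem} and achieves objective value $0$, so the optimal value is at most $0$; hence it is non-negative if and only if it equals $0$, i.e.\ no feasible $\mathbf{y}$ satisfies $\mathbf{p}_d^\mathsf{T}\mathbf{y}<0$. Farkas' lemma, in its standard homogeneous form, asserts that exactly one of the following holds: either $\mathbf{C}_d\mathbf{x}=\mathbf{p}_d$ admits a solution with $\mathbf{x}\geq \mathbf{0}$, or there exists $\mathbf{y}$ with $\mathbf{C}_d^\mathsf{T}\mathbf{y}\geq\mathbf{0}$ and $\mathbf{p}_d^\mathsf{T}\mathbf{y}<0$. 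The equivalence of (2) and (3) follows immediately.

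The main obstacle is not depth of argument but careful bookkeeping: one must verify that the fixed $(d(d+1)/2+1)\times 2^d$ matrix $\mathbf{C}_d$ defined via the canonical specification of $\Omega$ and $\calA$ really does implement the map $\mathbf{q}_d(\calA)\mapsto \mathbf{p}_d(\calA)$ for arbitrary $\calA$, and moreover that the column ordering $\myord$ and the row ordering used in the definition of $\mathbf{p}_d$ in \eqref{defn:pd} are consistent with those used to define $\mathbf{p}_d(\mathbf{B})$ in \eqref{defn:pdB}. Once this coherence is established, the proof reduces to one invocation of Corollary \ref{BCprobcor} and one invocation of Farkas' lemma.
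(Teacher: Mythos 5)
Your proposal is correct and follows essentially the same route as the paper: the implication from membership in $\calB_d$ to the existence of $\mathbf{x}\geq\mathbf{0}$ uses the vector $\mathbf{q}_d$ of atom probabilities, the converse builds a probability space from $\mathbf{x}$ (the paper uses intervals of $[0,1]$, you use the discrete space $\{0,1\}^d$, which is an immaterial difference) and invokes Corollary \ref{BCprobcor}, and the equivalence with the LP is exactly the paper's application of Farkas' lemma. Your explicit remarks that $\sum_k x_k=1$ is forced by the first row of $\mathbf{C}_d$ and that the LP optimum is at most zero are correct refinements of details the paper leaves implicit.
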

\begin{proof}
\begin{description}
\item [{\it i.} implies {\it ii.}:] Since $\mathbf{B}$ is in $\calB_d$, there exists a Bernoulli random vector $\mathbf{X}$ such that $\mathbf{B}=\E{\mathbf{X}\mathbf{X}^\mathsf{T}}$. The vector $\mathbf{q}_d$ defined in \eqref{qdef1} that corresponds to $\mathbf{X}$ satisfies 
$\mathbf{q}_d \geq \mathbf{0}$ such that $\mathbf{C}_d\mathbf{q}_d=\mathbf{p}_d$. We note that this {\em vertex representation} based LP formulation was observed in \citet{Lee1993}.
\item [{\it ii.} implies {\it i.}:] Since $\mathbf{x} \geq \mathbf{0}$ and $\mathbf{x}\cdot \mathbf{1}=1$, we partition $[0,1]$ into $2^d$ intervals such that their lengths equal the coordinates of $\mathbf{x}$. Let these intervals be denoted by\\ $(A_{0,...,0},A_{0,...,0,1},...,A_{1,...,1})$. Then the sets $A_i$, for $i=1,\ldots d$, defined by 
\[
A_i=\cup_{\{\mathbf{k} \in\{0,1\}^d | i\hbox{-th coordinate of $\mathbf{k}$ equals } 1\}} A_{\mathbf{k}}
\]
form $d$ events on the probability space $([0,1],\mathcal{L},\lambda)$. These events satisfy \eqref{BCprob}, and hence by Corollary \ref{BCprobcor} we have $\mathbf{B}$ is a BCM.
\item[{\it ii.} is equivalent to {\it iii.}:] Follows directly from Farkas Lemma (for example, see Theorem 0.1.41 of \citet{Grotschel1993}). 
\end{description}
\end{proof}

The above theorem in particular says that the BCM realization problem is equivalent to a LP problem with $d(d+1)/2+1$ variables  and $2^d$ constraints. Theorem \ref{FABC} and Theorem \ref{BCTD} combined lead to a method for verifying the membership of a matrix in $\mathcal{T}_d$, and this is summarized by the following corollary. 

\begin{corollary} \label{Algo1}
Let $\mathbf{T}=(t_{ij})$ be any $d \times d$ matrix with unit diagonal elements. Then $\mathbf{T}\in\calT_d$ if and only if the optimal value of the linear programming problem in \eqref{LPProblem} is non-negative with 
\[
\mathbf{p}_d:=\left(1,1/d,\ldots,1/d,t_{1,2}/d,t_{1,3}/d,\ldots,t_{d-1,d}/d\right)^{\mathsf{T}}.
\]
\end{corollary}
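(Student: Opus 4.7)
The proof is essentially an immediate chaining of Theorem \ref{BCTD} with Theorem \ref{FABC}, so the plan is short. First I would apply Theorem \ref{BCTD} to reduce membership of $\mathbf{T}$ in $\calT_d$ to membership of $(1/d)\mathbf{T}$ in $\calB_d$, which is legitimate since $\mathbf{T}$ has unit diagonal by hypothesis.

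Next I would compute $\mathbf{p}_d(\mathbf{B})$ explicitly for $\mathbf{B} = (1/d)\mathbf{T}$ using the definition in \eqref{defn:pdB}. The diagonal entries of $\mathbf{B}$ are $b_{ii} = 1/d$ and the off-diagonal entries are $b_{ij} = t_{ij}/d$, so
\[
\mathbf{p}_d(\mathbf{B}) = \left(1,\, 1/d,\ldots, 1/d,\, t_{1,2}/d,\, t_{1,3}/d,\ldots,\, t_{d-1,d}/d\right)^{\mathsf{T}},
\]
which matches the vector $\mathbf{p}_d$ in the statement.

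Finally, I would invoke the equivalence of (i) and (iii) in Theorem \ref{FABC} applied to $\mathbf{B} = (1/d)\mathbf{T}$: this says $(1/d)\mathbf{T} \in \calB_d$ if and only if the optimal value of \eqref{LPProblem}, with the above $\mathbf{p}_d$, is non-negative. Combining this with the first reduction yields the claimed equivalence.

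There is no real obstacle here — the corollary is simply a bookkeeping combination of the two theorems, the only content being the explicit identification of $\mathbf{p}_d((1/d)\mathbf{T})$. Consequently the proof is one or two lines and could reasonably be omitted, but writing it out as above makes the algorithmic content transparent for the subsequent discussion of LP-based realization tests.
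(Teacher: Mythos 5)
Your proposal is correct and matches the paper's reasoning exactly: the paper itself presents Corollary \ref{Algo1} as the direct combination of Theorem \ref{BCTD} with the equivalence of parts \emph{i.} and \emph{iii.} of Theorem \ref{FABC}, applied to $\mathbf{B}=(1/d)\mathbf{T}$, with the only computation being the identification of $\mathbf{p}_d((1/d)\mathbf{T})$ as in your second step. Nothing further is needed.
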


Note that the application of Farkas lemma converts the  problem of verifying the membership of a given $d\times d$ matrix in $\mathcal{B}_d$, equivalently $\mathcal{T}_d$, to an optimization problem in a polyhedra in $\Real^{d(d+1)/2+1}$ as given in \eqref{LPProblem}. Although the polyhedra has an exponential-in-$d$ sized description, this does not preclude the existence of a polynomial time algorithm to solve this optimization problem. This is so as the ellipsoid algorithm (see Chapter 3 in \citet{Grotschel1993})  can yield a polynomial (in $d$) time algorithm if the separation oracle runs in polynomial time. An example of a problem where this happens is the maximum matching problem on a general graph; here the number of constraints is exponential in the number of vertices (see Theorem P in \citet{Edmonds1965}) but nevertheless the separation oracle can be constructed to run in polynomial time using polynomial time algorithms for minimum T-odd cut problem. So it is natural to ask if this happens with the realization problem as well. Towards this end, we first note that in view of Theorem \ref{FABC} {\it iii.}, the realization problem is equivalent to determining the non-emptiness of the following polyhedra:
\begin{equation}\label{ell:eqn1}
\begin{bmatrix}
\mathbf{C}_d^\mathsf{T}\vspace{.1cm}\\
-\mathbf{p}_{d}^\mathsf{T}
\end{bmatrix}\mathbf{y}\geq 
\begin{bmatrix}
\mathbf{0}\vspace{.1cm}\\
1
\end{bmatrix}.
\end{equation}
For ellipsoid algorithm to run in polynomial time, we need, for a given $\mathbf{y}$, to ascertain in polynomial time if \eqref{ell:eqn1} is satisfied, and if not, determine a row $\mathbf{c}^\mathsf{T}$ of   $\mathbf{C}_d^\mathsf{T}$ such that $\mathbf{c}^\mathsf{T}\mathbf{y}<0$. To do this in polynomial time, we need an alternate polynomial in $d$ representation of the constraints, and for this, as is commonly done, we rephrase it as a combinatorial optimization problem. Towards doing this, let $K_{d+1}$ denote the $d+1$ vertex complete undirected weighted graph with vertex set $\{0,1,...,d\}$, and edge weights as described below: the edges $(0,i)$ have weights $y_{i+1}$, for $i=1,\ldots,d$; the edges $(i,j)$, $1\leq i<j \leq d$, in lexicographic ordering have weights $y_k$, $k=d+2,\ldots ,d(d+1)/2+1$, respectively. We  alternately, for convenience, denote the weights on an edge $e$ by $y_e$. It follows with little thought on the structure of $\mathbf{C}_d$ that $\mathbf{C}_d^\mathsf{T}\mathbf{y}\geq \mathbf{0}$ can be represented as 

\begin{eqnarray*}
	\min_{\substack{ S \subseteq \mathbf{S} \\ 0 \in S }} \sum_{e \in E(S) } y_e \geq -y_1,
\end{eqnarray*}
where $y_1 \geq 0$, $\mathbf{S}=\{0,...,d\}$ and $E(S)$, for $S \subseteq \mathbf{S}$, is the set of edges with both endpoints in $S$.
For any undirected weighted graph with a sub-vertex set $S$ and edge weights $y_e$, we have
\begin{eqnarray*}
		\sum_{v \in S} \sum_{e \in \delta(v)} y_e=2\sum_{e \in E(S)} y_e+ \sum_{e \in \delta(S)} y_e,
\end{eqnarray*}
where for a vertex $v$,  $\delta(v)$ represents the set of all edges with $v$ as one of the vertices, and for a sub-vertex set $S$, $\delta(S)$ represents all edges with exactly one vertex in $S$. It follows now that 
\[
\min_{\substack{ S \subseteq \mathbf{S} \\ 0 \in S }} \sum_{e \in E(S) } y_e  = 
\frac{1}{2}\min_{\substack{ S \subseteq \mathbf{S} \\ 0 \in S }} \left(\sum_{v \in S} \sum_{e \in \delta(v)} y_e - \sum_{e \in \delta(S)} y_e\right).
\]
By defining $z_i:= -\sum_{e \in \delta(i)} y_e$, for $i=0,\ldots,d$, the above can be written as 
\[
\min_{\substack{ S \subseteq \mathbf{S} \\ 0 \in S }} \sum_{e \in E(S) } y_e  = 
-\frac{1}{2}\max_{\substack{ S \subseteq \mathbf{S} \\ 0 \in S }} \left(\sum_{v \in S} z_v + \sum_{e \in \delta(S)} y_e\right).
\]
We now define  a new  undirected weighted graph $G$ from $K_{d+1}$ by adding a new vertex $\infty$ to the vertex set, and by adding edges $e=(i,\infty)$ with weights $y_e=z_i$, for $i=0,\ldots,d$ (see Figure \ref{graph:G}). Denoting $\mathbf{S}'= \mathbf{S} \cup \{\infty\}$, the vertex set of $G$, we have that $\mathbf{C}_d^\mathsf{T}\mathbf{y}\geq \mathbf{0}$ is equivalent to the following statement with respect to the graph $G$:
\[
\max_{\substack{ S \subseteq \mathbf{S}' \\ 0 \in S \\ \infty \notin S }} \sum_{e \in \delta(S)} y_e \leq 2y_1.
\]
The above basically says that the maximum weight of a $0$-$\infty$ cut is at most $2y_1$. In general max-cut problems are quite intractable in the sense of being NP-complete (see \citet{Karp1972} and \cite{Garey}). We note that NP-completeness of the latter while suggesting the same for the TDM realization problem, falls short of being a proof. 

\tikzstyle{place}=[circle,draw=black!50,fill=black!20,thick,
                   inner sep=0pt,minimum size=5mm]
    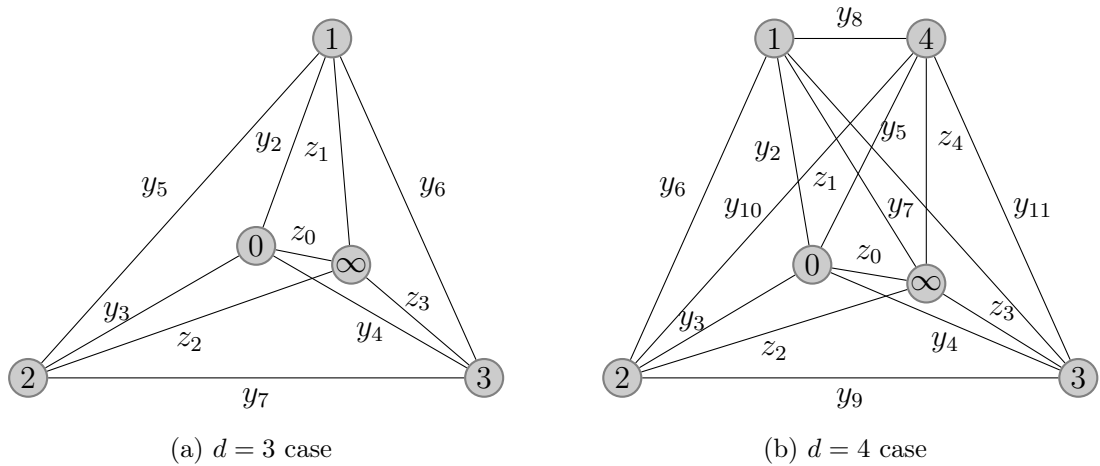
\begin{figure}[H]
 \captionsetup[subfigure]{font=footnotesize}
\centering
\subcaptionbox{$d=3$ case}[.48\textwidth]{    
    \begin{tikzpicture}[scale=0.5]
  \node(1) at ( 11,9) [place] {$1$};
  \node(2) at ( 3,0) [place] {$2$};
  \node(3) at ( 15,0) [place] {$3$};
  \node(0) at ( 9,3.5) [place] {$0$};
  \node(i) at ( 11.5,3) [place] {$\infty$};
  \draw (1) to node[above left]   {$y_5$} (2)
            to  node[below] {$y_7$} (3)
  			to node[above right]{$y_6$}(1);
  	\draw (1) to node[left]   {$y_2$} (0)
  			 to	 node [left]   {$y_3$} (2);
  \draw (1) to node[left]   {$z_1$} (i)
  			 to	 node [below]   {$z_2$} (2);
   	\draw (0) to node[below]   {$y_4$} (3)
  			 to	 node [above]   {$z_3$} (i)
  			 to	 node [above]   {$z_0$} (0);
\end{tikzpicture}
}
\captionsetup[subfigure]{font=footnotesize}
\centering
\subcaptionbox{$d=4$ case}[.48\textwidth]{
\begin{tikzpicture}[scale=0.5]
  \node(1) at ( 7,9) [place] {$1$};
  \node(2) at ( 3,0) [place] {$2$};
  \node(3) at ( 15,0) [place] {$3$};
  \node(4) at ( 11,9) [place] {$4$};
  \node(0) at ( 8,3) [place] {$0$};
  \node(i) at ( 11,2.5) [place] {$\infty$};
  \draw (1) to node[above left]   {$y_6$} (2)
            to  node[below] {$y_9$} (3)
  			to node[left]{$y_7$}(1);
  	\draw (1) to node[left]   {$y_2$} (0)
  			 to	 node [left]   {$y_3$} (2);
  \draw (1) to node[below left]   {$z_1$} (i)
  			 to	 node [below]   {$z_2$} (2);
   	\draw (0) to node[below]   {$y_4$} (3)
  			 to	 node [above]   {$z_3$} (i)
  			 to	 node [above]   {$z_0$} (0);
    \draw (1) to node[above]   {$y_8$} (4)
  			 to	 node [left]   {$y_{10}$} (2);
      \draw (3) to node[right]   {$y_{11}$} (4)
  			 to	 node [above right]   {$y_5$} (0);
  	    \draw (i) to node[above right]   {$z_4$} (4);
\end{tikzpicture}
}
  \caption{The Augmented Weighted Undirected Complete Graph $G$}
\label{graph:G}
\end{figure}

\begin{remark}
The above importantly suggests that if a subclass of TDMs is such that the equation $\mathbf{C}_d\mathbf{x}=\mathbf{p}_d$, $\mathbf{x}\geq \mathbf{0}$ of Theorem \ref{FABC} {\it ii.} can be reduced such that the separation problem can be phrased in terms of a combinatorial optimization problem that is solvable in polynomial time, then the realization problem for this subclass of TDMs can be solved in polynomial time. For example, the max-cut problem for certain sub-classes of undirected real weighted graphs have been shown to be solvable in polynomial time. Some of these include \citet{Barahona1983} which considers classes of graphs not contractible to, $K_5$,  the complete graph with $5$ vertices; a better algorithm for planar graphs (which are not contractible to $K_5$ by Wagner's theorem) is given in \citet{Shih1990}; for series-parallel graphs see \citet{Xu}. Identifying subclasses along the above suggested lines is beyond the scope of our current work.
\end{remark}

\section{Reducing Complexity: Symmetry and Sparsity}
In this section, we show that if a given matrix exhibits  invariance with respect to permutations of indices, then there is a resultant decrease in the complexity of the realization problem for TDMs and BCMs. 
Also, there is a significant reduction in complexity in the presence of zero elements in the TDM/BCM parametrization. 

In the following, $\mathcal{S}_d$ denotes the symmetric group on the set $\{1,\ldots,d\}$ - that is the set of all permutations of $\{1,\ldots,d\}$ with the composition of maps as the product operation and the neutral map serving as the identity. Note that the cardinality of $\mathcal{S}_d$ is $d!$. Let $\mathbf{P}_\sigma$ denote the permutation matrix associated with the permutation $\sigma$. To be specific, the matrix $\mathbf{P}_\sigma$ is in $\{0,1\}^{d^2}$ with its $ij$-th element being $1$ \iff $i=\sigma(j)$. So the identity permutation corresponds to the identity matrix, $\mathbf{P}_\sigma \mathbf{A}$ (resp., $\mathbf{A}\mathbf{P}_\sigma^{\mathsf{T}}$) results in a matrix which is a row (resp., column) permutation of $\mathbf{A}$.

For a given matrix $\mathbf{A}$, let $\mathcal{G}_\mathbf{A} \subseteq \mathcal{S}_d$ be the set of all permutation of the indices under which $\mathbf{A}$ is invariant. It is easily verified that $\mathbf{A}$ is invariant under $\sigma$-permutation of its indices \iff $\mathbf{P}_\sigma \mathbf{A} \mathbf{P}_\sigma ^ \mathsf{T}=\mathbf{A}$. We note that $\mathcal{G}_\mathbf{B}$ is a subgroup of $\mathcal{S}_d$, which follows from the fact that $\mathbf{P}_\sigma \cdot \mathbf{P}_\delta$ is the permutation matrix corresponding to ${\sigma \cdot \delta = \sigma\circ \delta}$, for $\sigma,\delta \in\mathcal{S}_d$. 

We note that matrices formulated in Example \ref{ex:two Sector} below are all invariant with respect to the subgroup  $\mathcal{S}_{d_1+d_2}$ consisting of permutations which are composition of two permutations, one which permutes indexes ${\{1,\ldots,d_1\}}$ among themselves and another which permutes indexes $\{d_1+1,\ldots,d_1+d_2\}$ among themselves. Note that the cardinality of this subgroup is $d_1! d_2!$. Similarly, the sub-group corresponding to the matrices formulated in Example {\ref{ex:equi-correlation}} is the whole of $\mathcal{S}_d$ with cardinality $d!$. 

The following lemma states that a BCM $\mathbf{B}$ necessarily corresponds to a Bernoulli random vector that inherits the symmetry in $\mathbf{B}$ as represented by $\mathcal{G}_\mathbf{B}$. This is key to reducing the complexity of the realization problem. 

\begin{lemma}\label{Symm}
A matrix $\mathbf{B}_{d \times d}$ is in $\calB_d$ if and only if there exists a random vector $\mathbf{X}$ taking values in $\{0,1\}^d$ with  $\mathbf{B}=\E{\mathbf{X}\mathbf{X}^{\mathsf{T}}}$, and satisfying $\mathbf{X}\eqd \mathbf{P}_\sigma \mathbf{X}$ for all $\sigma \in \mathcal{G}_\mathbf{B}$.
\end{lemma}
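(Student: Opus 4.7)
The ``if'' direction is immediate from the definition of $\calB_d$: the existence of any Bernoulli random vector $\mathbf{X}$ with $\mathbf{B}=\E{\mathbf{X}\mathbf{X}^\mathsf{T}}$ suffices, whether or not it has the symmetry property.

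For the ``only if'' direction, my plan is a standard group-averaging (symmetrization) argument. Start by invoking the definition of $\mathbf{B}\in\calB_d$ to obtain some random vector $\mathbf{Y}$ taking values in $\{0,1\}^d$ with $\mathbf{B}=\E{\mathbf{Y}\mathbf{Y}^\mathsf{T}}$. Next, on a (possibly enlarged) probability space, let $\Sigma$ be a random permutation, independent of $\mathbf{Y}$, distributed uniformly on the finite subgroup $\mathcal{G}_\mathbf{B}\subseteq\mathcal{S}_d$. Define
\begin{equation*}
\mathbf{X} := \mathbf{P}_\Sigma \mathbf{Y}.
\end{equation*}
Since each $\mathbf{P}_\sigma$ maps $\{0,1\}^d$ to itself, $\mathbf{X}$ is again a $\{0,1\}^d$-valued random vector.

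The two verifications are then routine. First, conditioning on $\Sigma$ and using independence,
\begin{equation*}
\E{\mathbf{X}\mathbf{X}^\mathsf{T}} \;=\; \frac{1}{|\mathcal{G}_\mathbf{B}|}\sum_{\sigma\in\mathcal{G}_\mathbf{B}} \mathbf{P}_\sigma\, \E{\mathbf{Y}\mathbf{Y}^\mathsf{T}}\, \mathbf{P}_\sigma^\mathsf{T} \;=\; \frac{1}{|\mathcal{G}_\mathbf{B}|}\sum_{\sigma\in\mathcal{G}_\mathbf{B}} \mathbf{P}_\sigma \mathbf{B} \mathbf{P}_\sigma^\mathsf{T} \;=\; \mathbf{B},
\end{equation*}
where the last equality uses the characterization noted before the lemma, namely $\mathbf{P}_\sigma \mathbf{B} \mathbf{P}_\sigma^\mathsf{T}=\mathbf{B}$ for every $\sigma\in\mathcal{G}_\mathbf{B}$.

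Second, for any fixed $\tau\in\mathcal{G}_\mathbf{B}$, we have $\mathbf{P}_\tau \mathbf{X}=\mathbf{P}_\tau \mathbf{P}_\Sigma \mathbf{Y}=\mathbf{P}_{\tau\Sigma}\mathbf{Y}$. Since left multiplication by $\tau$ is a bijection of the subgroup $\mathcal{G}_\mathbf{B}$ onto itself, $\tau\Sigma$ is again uniform on $\mathcal{G}_\mathbf{B}$ and independent of $\mathbf{Y}$; hence $(\tau\Sigma,\mathbf{Y})\eqd(\Sigma,\mathbf{Y})$ as pairs, and consequently $\mathbf{P}_\tau \mathbf{X}\eqd \mathbf{P}_\Sigma \mathbf{Y}=\mathbf{X}$. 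This completes the proof plan. I expect no real obstacle: the only point requiring care is being precise about the joint distribution of $(\Sigma,\mathbf{Y})$ so that both the computation of $\E{\mathbf{X}\mathbf{X}^\mathsf{T}}$ and the invariance in distribution are rigorously justified.
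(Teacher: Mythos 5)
Your proposal is correct and is essentially the same argument as the paper's: randomize by a uniform permutation on $\mathcal{G}_\mathbf{B}$ independent of the original Bernoulli vector, use $\mathbf{P}_\sigma \mathbf{B}\mathbf{P}_\sigma^\mathsf{T}=\mathbf{B}$ to keep the second-moment matrix unchanged, and use that left multiplication by a group element preserves the uniform distribution on the subgroup to get the distributional invariance. Your write-up merely spells out the conditioning step a little more explicitly than the paper does.
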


\begin{proof}
Note that the sufficiency part follows trivially from the definition of $\calB_d$. For the necessity part, for a given $\mathbf{B}_{d \times d}\in \calB_d$, let $\mathbf{X}$ be a random vector taking values in  $\{0,1\}^d$ satisfying $\mathbf{B}=\E{\mathbf{X}\mathbf{X}^{\mathsf{T}}}$. Let $\sigma_U$ be independent of $\mathbf{X}$ and be distributed uniformly on the subgroup $\mathcal{G}_\mathbf{B}$. Since $\delta \cdot \sigma_U \eqd \sigma_U$ for all $\delta \in \mathcal{G}_\mathbf{B}$, we have $\mathbf{Y}:=\mathbf{P}_{\sigma_U}\mathbf{X}$ satisfies $\mathbf{P}_\delta \mathbf{Y} \eqd \mathbf{Y}$ for all $\delta \in \mathcal{G}_\mathbf{B}$. The proof is completed by observing that
\[
\mathbf{B}=\E{\mathbf{P}_{\sigma_U} \mathbf{B} \mathbf{P}_{\sigma_U}^\mathsf{T}}= 
\E{\mathbf{P}_{\sigma_U} \mathbf{X}\mathbf{X}^{\mathsf{T}} \mathbf{P}_{\sigma_U}^\mathsf{T}} = \E{\mathbf{Y}\mathbf{Y}^{\mathsf{T}}}.
\] 
\end{proof}

Let $\mathbf{B}$ be a $d\times d$ matrix. We say that two functions $g$ and $h$ on $\{1,\ldots,d\}$ are $\mathcal{G}_\mathbf{B}$ equivalent if $g=h\circ \sigma$ for some $\sigma\in \mathcal{G}_\mathbf{B}$. Note that the space of functions from $\{1,\ldots,d\}$ to $\{0,1\}$ is isomorphic to $\{0,1\}^d$. Since $\mathcal{G}_\mathbf{B}$ is a group with composition as the group operation, it follows that this defines an equivalence relation on $\{0,1\}^d$, which we denote by $\gbequiv$. In the following we will denote the cardinality of the quotient set $\{0,1\}^d/\gbequiv$ by $N(\mathbf{B})$. Let $\pi_\mathbf{B}$ denote the projection from $\{0,1\}^d$ to $\{0,1\}^d/\gbequiv$, and the $2^d \times N(\mathbf{B})$ matrix $\Pi_\mathbf{B}$ be such that its $\mathbf{i}$-th row, $\mathbf{i}\in \{0,1\}^d$, in $\myord$ ordering has a one in its column corresponding to $\pi_\mathbf{B}(\mathbf{i})$ with the rest of the elements being zero. The following theorem makes precise the reduction in complexity of the realization problem that arises from symmetry. 

\begin{theorem} \label{reduce_thm}
For any matrix  $\mathbf{B}_{d \times d}$, the following are equivalent:
\begin{enumerate}
\item $\mathbf{B}\in \calB_d$ 
\item There exists a vector $\mathbf{x}$ in $[0,\infty)^{N(\mathbf{B})}$ such that $\mathbf{C}_d\Pi_\mathbf{B}\mathbf{x}=\mathbf{p}_d(\mathbf{B})$.
\end{enumerate}
\end{theorem}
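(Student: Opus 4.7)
The plan is to reduce the equivalence to Theorem \ref{FABC} by exploiting the structural content of Lemma \ref{Symm}: a BCM with symmetry group $\mathcal{G}_\mathbf{B}$ admits a realization whose probability mass function on $\{0,1\}^d$ is constant on $\gbequiv$-equivalence classes. This constancy is exactly what the matrix $\Pi_\mathbf{B}$ encodes, so the linear system $\mathbf{C}_d\mathbf{q}_d=\mathbf{p}_d(\mathbf{B})$ of Theorem \ref{FABC} ({\it ii}) can be restricted from $2^d$ variables to $N(\mathbf{B})$ variables without loss.

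For the direction {\it i.} $\Rightarrow$ {\it ii.}, I would invoke Lemma \ref{Symm} to produce a random vector $\mathbf{X}$ on $\{0,1\}^d$ with $\mathbf{B}=\E{\mathbf{X}\mathbf{X}^\mathsf{T}}$ and $\mathbf{X}\eqd\mathbf{P}_\sigma\mathbf{X}$ for every $\sigma\in\mathcal{G}_\mathbf{B}$. Then for any $\mathbf{i},\mathbf{j}\in\{0,1\}^d$ with $\mathbf{i}\gbequiv\mathbf{j}$, unpacking the definition of the equivalence relation (viewing $\mathbf{i},\mathbf{j}$ as functions on $\{1,\ldots,d\}$) shows $\mathbf{i}=\mathbf{P}_\sigma\mathbf{j}$ for some $\sigma\in\mathcal{G}_\mathbf{B}$, so the distributional invariance gives $q_\mathbf{i}=q_\mathbf{j}$. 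Hence the common value on each equivalence class $c$ defines a vector $\mathbf{x}\in[0,\infty)^{N(\mathbf{B})}$ satisfying $\Pi_\mathbf{B}\mathbf{x}=\mathbf{q}_d(\calA)$, where $\calA=\{A_1,\ldots,A_d\}$ is associated with $\mathbf{X}$ as in the paragraphs before Theorem \ref{FABC}. The identity $\mathbf{C}_d\mathbf{q}_d(\calA)=\mathbf{p}_d(\calA)=\mathbf{p}_d(\mathbf{B})$ then yields $\mathbf{C}_d\Pi_\mathbf{B}\mathbf{x}=\mathbf{p}_d(\mathbf{B})$.

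For the direction {\it ii.} $\Rightarrow$ {\it i.}, I would simply set $\mathbf{q}:=\Pi_\mathbf{B}\mathbf{x}$. Since $\Pi_\mathbf{B}$ has nonnegative entries and $\mathbf{x}\geq\mathbf{0}$, we have $\mathbf{q}\geq\mathbf{0}$, and by hypothesis $\mathbf{C}_d\mathbf{q}=\mathbf{p}_d(\mathbf{B})$. Theorem \ref{FABC} ({\it ii}) then delivers $\mathbf{B}\in\calB_d$ directly; the normalization $\mathbf{q}\cdot\mathbf{1}=1$ needed to interpret $\mathbf{q}$ as a probability vector follows automatically from the first coordinate equation (the all-ones row of $\mathbf{C}_d$ paired with the leading 1 in $\mathbf{p}_d(\mathbf{B})$).

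No serious obstacle is expected; the bulk of the work has already been done in Lemma \ref{Symm} and Theorem \ref{FABC}. The only subtlety worth careful writing is the bookkeeping that translates $\mathbf{i}\gbequiv\mathbf{j}$ (defined in terms of composition of set-functions) into the matrix statement $\mathbf{i}=\mathbf{P}_\sigma\mathbf{j}$, so that the distributional symmetry of $\mathbf{X}$ can be applied to conclude $q_\mathbf{i}=q_\mathbf{j}$. Once that is in hand, the rest is routine substitution.
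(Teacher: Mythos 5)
Your proposal is correct and follows essentially the same route as the paper: the direction \emph{ii.}\ $\Rightarrow$ \emph{i.}\ is handled by Theorem \ref{FABC}, and \emph{i.}\ $\Rightarrow$ \emph{ii.}\ uses Lemma \ref{Symm} to obtain a symmetrized realization whose vector $\mathbf{q}_d$ is constant on $\gbequiv$-classes and hence of the form $\Pi_\mathbf{B}\mathbf{x}$. Your extra bookkeeping relating $\mathbf{i}\gbequiv\mathbf{j}$ to $\mathbf{i}=\mathbf{P}_\sigma\mathbf{j}$ just makes explicit what the paper leaves implicit.
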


\begin{proof}
From Theorem \ref{FABC} we have {\it ii.} implies {\it i}. For {\it i.} implies {\it ii.}, by Lemma \ref{Symm}, we have a random vector 
$\mathbf{X}$ taking values in $\{0,1\}^d$, satisfying  $\mathbf{B}=\E{\mathbf{X}\mathbf{X}^{\mathsf{T}}}$, and $\mathbf{X}\eqd \mathbf{P}_\sigma \mathbf{X}$ for all $\sigma \in \mathcal{G}_\mathbf{B}$. This implies that the vector $\mathbf{q}_d$, corresponding to such a random vector $\mathbf{X}$, can be chosen using the above discussion to satisfy
$\mathbf{q}_d=\Pi_\mathbf{B}\mathbf{x}$, for $\mathbf{x}$ in $[0,\infty)^{N(\mathbf{B})}$. Since $\mathbf{C}_d\mathbf{q}_d=\mathbf{p}_d(\mathbf{B})$, we have $\mathbf{C}_d\Pi_\mathbf{B}\mathbf{x}=\mathbf{p}_d(\mathbf{B})$. 
\end{proof}

\begin{remark}
In the above theorem, we focused on reduction in the number of variables in the LP formulation of the realization problem, as its size equals $2^d$ whereas the number of constraints is polynomial in $d$. Nevertheless, examples below exhibit reduction in the number of constraints as well. While $\mathcal{G}_\mathbf{B}$ having at least two elements will reduce the number of variables, the reduction is much more fruitful in parametrizations which yield $N(\mathbf{B})$ with polynomial dependence on $d$; in the latter case the realization problem restricted to the parametric class becomes polynomial in complexity.
\end{remark}

\begin{remark}
Towards describing an algorithm to compute $N(\mathbf{B})$, we begin by considering $d\times d$ matrices $\mathbf{B}$ with a constant main diagonal, and $k$ distinct off-diagonal values. Such matrices can naturally be mapped to an edge labeled graph with $d$ vertices and $k-1$ distinct edge labels. We now observe that this latter constructed graph's automorphism group, a permutation group on $\{1,\ldots,d\}$, equals $\calG_{\mathbf{B}}$. The problem of finding the generators of this automorphism group is polynomially equivalent to the graph isomorphism problem, as shown in \citet{Mathon1979}. Moreover, while the exact complexity of the graph isomorphism problem has not yet been established, it has been recently shown to admit a quasipolynomial time algorithm, see \cite{Babai2016}. On the practical side there exists many competitive algorithms, and \citet{Mckay2014} is a recent survey of such algorithms and software libraries. We note that $N(\mathbf{B})$ equals the number of orbits (of $\calG_{\mathbf{B}}$ acting on $\{0,1\}^d$), and can be computed by using P\'olya's enumeration theorem. Finally, in the case of matrices $\mathbf{B}$ with non-constant main diagonal with $l$ distinct values, we can either introduce vertex labeling with $l$ vertex labels or create self loops with $l$ additional edge labels. The latter approach is particularly conducive to using SageMath (see \citet{sagemath}) to compute $N(\mathbf{B})$, and we demonstrate this in Figure \ref{fig:sagemath} of the Appendix.
\end{remark}

The equi-correlation parametrization of Example \ref{ex:equi-correlation} exhibits the largest possible $\calG_\mathbf{B}$, and hence the lowest possible value of $d+1$ for $N(\mathbf{B})$. This reduction is used below to find necessary and sufficient conditions on the parameters, {\it i.e.} a description of the constraint polytope,  for the matrix to be a BCM.\\

\noindent \emph{Example \ref{ex:equi-correlation} (Continued)}
In the following, we derive $\beta_l(\cdot)$, which will specify that the conditions $\alpha\in[0,1]$ and  $\beta \geq \beta_l(\alpha)$ are together both necessary and sufficient  for the matrix of the form \eqref{equi-corr-mat} to be a BCM. From Corollary \ref{BCprobcor}, we know that if $\beta \geq \beta_l(\alpha)$ for some $\alpha\in[0,1]$, then there exists events $A_i$ such that $\Pr{A_i}=\alpha$, for $i=1,\ldots,d$, and $\Pr{A_i \cap A_j}=\beta$, $1 \leq i < j \leq d$. By Lemma \ref{Symm} above, we can assume that for any $k=0,\ldots,d$, $q_{i_1,...,i_d}$ defined in \eqref{qdef1} satisfies 
\[
q_{i_1,...,i_d}=x_k, \quad \hbox{for } (i_1,\ldots,i_d)\in\{0,1\}^d,\; \text{ with } \; k=\sum_{j=1}^d i_j
\]  
for some non-negative reals $x_k$, $k=0,\ldots,d$. Moreover, $\mathbf{C}_d\mathbf{q}_d=\mathbf{p}_d$ reduces to three equations. The first equation is easily seen to reduce to $\sum_{k=0}^d \binom{d}{k}  x_k =1$. The next $d$ equations all reduce to $\sum_{k=1}^d \binom{d-1}{k-1} x_k =\alpha$, with $\binom{d-1}{k-1}$ representing the number of 
binary tuples $(i_1,\ldots,i_d)$ with $\sum_{j=1}^d i_j=k$ and $i_m=1$ for some fixed $m$. The remaining $\binom{d}{2}$ equations  all reduce to $\sum_{k=2}^d \binom{d-2}{k-2} x_k =\beta$ with $\binom{d-2}{k-2}$ representing the number of binary tuples $(i_1,\ldots,i_d)$ with $\sum_{j=1}^d i_j=k$ and $i_m=1=i_n$ for some fixed $m\neq n$. From the previous discussion it follows that $\beta_l(\alpha)$, is the optimal value of the following linear programming problem:
    	\begin{eqnarray*}
			&\text{minimize}& \sum_{k=2}^d  \begin{pmatrix}
				d-2 \\
				k-2
			\end{pmatrix}  x_k; \\
			&\text{subject to} &   \sum_{k=1}^d  
            \begin{pmatrix}
				d-1 \\
				k-1
			\end{pmatrix}  x_k =\alpha; \qquad
            \sum_{k=0}^d  
            \begin{pmatrix}
				d \\
				k
			\end{pmatrix}  x_k =1;\qquad x_k \geq 0,\; k=0,\ldots,d.
		\end{eqnarray*}
Using duality, we note that $\beta_l(\alpha)$ is the optimal value of the following linear programming problem as well.  
    \begin{eqnarray*}
		&\text{maximize}& \qquad \alpha y_1+y_2;  \\
		& & \\
		&\text{subject to} & \qquad y_2 \leq 0; \qquad y_1+dy_2 \leq 0; \\
		& & \\
		& & \qquad
        \begin{pmatrix}
			d-1\\
			k-1
		\end{pmatrix}  y_1+ 
        \begin{pmatrix}
			d\\
			k
		\end{pmatrix}  y_2 \leq 
        \begin{pmatrix}
			d-2\\
			k-2
		\end{pmatrix},\; \text{for } k=2,...,d.\nonumber
	\end{eqnarray*}
This is so as $(x_0,x_1,\ldots,x_{d-1},x_d)=(1-\alpha,0,\ldots,0,\alpha)$ is a feasible solution in the first problem and  $(y_1,y_2)=(0,0)$ is a  feasible solution in the second. In the following, we solve the latter linear programming problem. 
    
It can be shown that $\left(\frac{2k}{d-1},-\frac{k(k+1)}{d(d-1)} \right)$, for $k=0,\ldots,d$, form the vertices of the polyhedral solution space. So the optimal value is attained at one of these vertices, and hence we maximize $\alpha y_1+y_2$ over these $d$ vertices. Let $\alpha d=j+\varepsilon$, for $j$ in  $\{0,\ldots,d\}$ and $\varepsilon$ in $[0,1)$. Then
\begin{multline*}
\alpha y_1+y_2=\frac{(j+\varepsilon)y_1+dy_2}{d}=\frac{-k^2+2kj+2k\varepsilon-k}{d(d-1)}\\=\frac{-(k-j-\varepsilon+\frac{1}{2})^2+(j+\varepsilon-\frac{1}{2})^2}{d(d-1)}.
\end{multline*}
It follows from above that the maximum is attained when $j=k$, in which case $\alpha y_1+y_2$ equals 
\[
\frac{j^2+2j\varepsilon-j}{d(d-1)}.
\]
In other words, $\beta_l(\alpha)$ is given by
\begin{eqnarray}
		 \beta_l(\alpha)=\frac{ \left(2\alpha d- \lfloor  \alpha d \rfloor-1\right) \lfloor  \alpha d \rfloor}{d(d-1)}. \label{LB}
	\end{eqnarray}

\begin{remark}
In Problem Class 1 of \citet{Krause2018} they consider a sub-class of the above with $\beta=\kappa\alpha^2$, with the parametrization $(\alpha,\kappa)\in [0,1]^2$. Note that, using the above analysis, we have for each fixed $\alpha$, the lower bound on $\kappa$ such that $(\alpha,\kappa\alpha^2)$ belongs to the above constraint polytope is given by 
\[
 \frac{ \left(2\alpha d- \lfloor  \alpha d \rfloor-1\right) \lfloor  \alpha d \rfloor}{\alpha^2 d(d-1)}.
\]
In Figure 5 of their paper, they have graphed this region of the parameter space, supported by numerical results, for a few dimensions.  
\end{remark}

\begin{remark}
For dimension $d$, the number of facets of the polytope in $[0,1]^2$ of values $(\alpha,\beta)$ resulting in a equi-correlation BCM is $d+1$. Figure \ref{equi_3} (a) and (b) show the polytopes for the cases when $d=2$ and $d=3$, respectively. Also, when $d \uparrow \infty$, $\beta_l(\alpha) \uparrow \alpha^2$ for any $\alpha\in[0,1]$, with the lower boundary of the limiting convex set corresponding to the independent case (see bold curve in Figure \ref{equi_3} (c)).

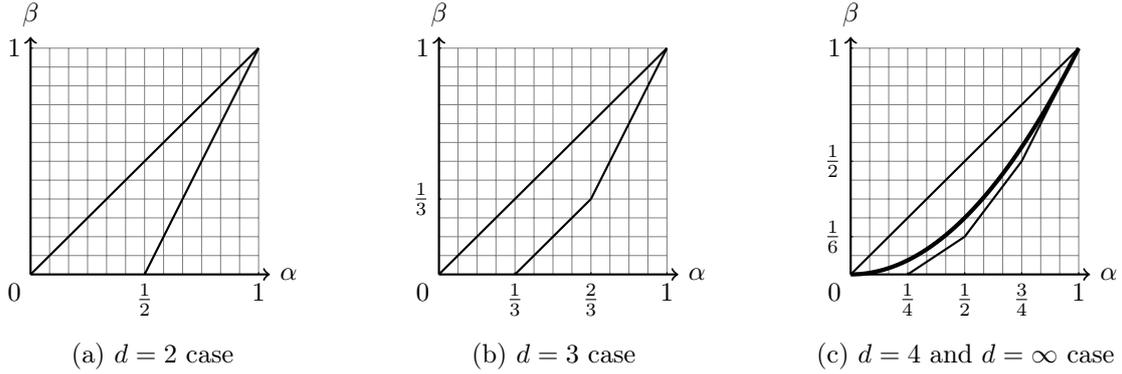
\begin{figure}
\captionsetup[subfigure]{font=footnotesize}
\centering
\subcaptionbox{$d=2$ case}[.33\textwidth]{%
\begin{tikzpicture}[scale=3]\footnotesize
 \pgfmathsetmacro{\xone}{0}
 \pgfmathsetmacro{\xtwo}{1.05}
 \pgfmathsetmacro{\yone}{0}
 \pgfmathsetmacro{\ytwo}{1.05}
\begin{scope}<+->;
  \draw[step=.08333cm,gray,very thin] (\xone,\yone) grid (1,1);
  \foreach \x/\xtext in { .5/{\frac{1}{2}}, 1/1}
  \draw[black,xshift=\x cm] (0,.01) -- (0,0) node[below] {$\xtext$};
  \foreach \y/\ytext in { 1/1}
    \draw[black, yshift=\y cm] (.01,0) -- (0,0)   node[left] {$\ytext$};
 \draw[black] (0,0) node[anchor=north east] {$0$};
  \draw[thick,->] (\xone, 0) -- (\xtwo, 0) node[right] {$\alpha$};
  \draw[thick,->] (0, \yone) -- (0, \ytwo) node[above] {$\beta$};
   \draw[thick] (0, 0) -- (1, 1);
      \draw[thick] (.5, 0) -- (1,1) ;
\end{scope}
\end{tikzpicture}
}%
\subcaptionbox{$d=3$ case}[.33\textwidth]{
\begin{tikzpicture}[scale=3]\footnotesize
 \pgfmathsetmacro{\xone}{0}
 \pgfmathsetmacro{\xtwo}{1.05}
 \pgfmathsetmacro{\yone}{0}
 \pgfmathsetmacro{\ytwo}{1.05}
\begin{scope}<+->;
  \draw[step=.08333cm,gray,very thin] (\xone,\yone) grid (1,1);
  \foreach \x/\xtext in { .33333/{\frac{1}{3}} ,.66667/{\frac{2}{3}}, 1/1}
  \draw[black,xshift=\x cm] (0,.01) -- (0,0) node[below] {$\xtext$};
  \foreach \y/\ytext in { .33333/\frac{1}{3}, 1/1}
    \draw[black, yshift=\y cm] (.01,0) -- (0,0)   node[left] {$\ytext$};
 \draw[black] (0,0) node[anchor=north east] {$0$};
  \draw[thick,->] (\xone, 0) -- (\xtwo, 0) node[right] {$\alpha$};
  \draw[thick,->] (0, \yone) -- (0, \ytwo) node[above] {$\beta$};
   \draw[thick] (0, 0) -- (1, 1);
   \draw[thick] (.33333, 0) -- (.66667, .33333) ;
     \draw[thick] (.66667, .33333) -- (1, 1) ;
\end{scope}
\end{tikzpicture}}
\subcaptionbox{$d=4$ and $d=\infty$ case}[.33\textwidth]{
\begin{tikzpicture}[scale=3]\footnotesize
 \pgfmathsetmacro{\xone}{0}
 \pgfmathsetmacro{\xtwo}{1.05}
 \pgfmathsetmacro{\yone}{0}
 \pgfmathsetmacro{\ytwo}{1.05}
\begin{scope}<+->;
  \draw[step=.08333cm,gray,very thin] (\xone,\yone) grid (1,1);
  \foreach \x/\xtext in {.25/\frac{1}{4}, .5/\frac{1}{2}, .75/\frac{3}{4}, 1/1}
  \draw[black,xshift=\x cm] (0,.01) -- (0,0) node[below] {$\xtext$};
  \foreach \y/\ytext in { .1666667/\frac{1}{6}, .5/\frac{1}{2},  1/1}
    \draw[black, yshift=\y cm] (.01,0) -- (0,0)   node[left] {$\ytext$};
 \draw[black] (0,0) node[anchor=north east] {$0$};
  \draw[thick,->] (\xone, 0) -- (\xtwo, 0) node[right] {$\alpha$};
  \draw[thick,->] (0, \yone) -- (0, \ytwo) node[above] {$\beta$};
   \draw[thick] (0, 0) -- (1, 1);
   \draw[thick] (.25, 0) -- (.5, .166667) ;
     \draw[thick] (.5, .16667) -- (.75, .5) ;
     \draw[thick]  (.75, .5) -- (1,1) ;
     \draw[ultra thick,domain=0:1,smooth] plot (\x,{(\x)^2});
\end{scope}
\end{tikzpicture}}
\caption{Polytope of TDM Generating Values of $(\alpha, \beta)$} \label{equi_3}
\end{figure}
\end{remark}

\begin{remark}
Note that for a $d$-dimensional random vector, the variance of the sum of its elements is nonnegative. This fact can be used to derive a lower bound, say $b_l(\alpha)$ on $\beta$, for each $\alpha\in[0,1]$; this is given by  
\begin{eqnarray}
	b_l(\alpha) = \frac{\alpha^2d-\alpha}{d-1}. \label{LB_b}
\end{eqnarray}
Comparing $\beta_l$ and $b_l$, for $\alpha d=k+\varepsilon$, where $k=\{0,\ldots,d\}$ and $\varepsilon \in [0,1)$, we get
\[
b_l(\alpha)=\beta_l(\alpha)+ \frac{\varepsilon(1-\varepsilon)}{d(d-1)}.
\]
Thus, when $\varepsilon=0$ (or equivalently $\alpha d$ is an integer) the above two lower bounds coincide, but not otherwise. 
\end{remark}

So far we have focused on the reduction in the complexity of the realization problem gained by the inherent symmetry in the parametric class. But some parameterizations of practical interest lack much symmetry. One such parametric class which results in a small $\calG_\mathbf{B}$ is the class of Toeplitz matrices (see \eqref{Toep_matrix}). To see this, we note that a matrix $\mathbf{A}$ is called centro-symmetric if its elements $a_{ij}$ satisfy 
\[
a_{i,j} = a_{n+1-i,n+1-j}, \quad 1\leq i,j \leq n.
\]
Equivalently, a centro-symmetric matrix is one that is invariant to permuting its rows and columns by the permutation $\sigma$ given by 
\[
\sigma=
\begin{pmatrix}
1 & 2 & \cdots & n-1 & n \\
n& n-1 & \cdots & 2 & 1
\end{pmatrix}.
\]
All Toeplitz matrices are centro-symmetric and $\calG_\mathbf{B}$ consists of only the identity and the above permutation.  

Nevertheless, reduction in complexity can also be achieved if there is a significant presence of zero elements in the parametrization, {\it i.e.} a sparse parametrization. As one such example, we consider below the parametric class of matrices corresponding to $m$-dependent stationary sequences. 

In an $m$-dependent stochastic process indexed by $\mathbb{Z}$, the observations at times more than $m$ apart are independent. This implies that the TDMs corresponding to such a stochastic process has elements equal to zero that are not on any of the $k$-diagonals, for $k=0,\ldots,m$. In addition, as mentioned earlier, if the process is stationary then the TDM corresponding to such a stochastic process is symmetric Toeplitz. This gives rise to a $m$-dimensional parametrization of such TDMs; a natural question is which of such symmetric Toeplitz matrices is a valid $d$-dimensional TDM. The proposition below shows that complexity of the $d$-dimensional realization problem restricted to such symmetric Toeplitz matrices is at most linear in $d$. 

\begin{proposition}\label{prop:m-dependence}
Consider the class, $\mathcal{T}_{d:m}$, of $d$-dimensional matrices which are symmetric Toeplitz with elements equal to zero that are not on any of the $k$-diagonals for $k=0,\ldots,m$. The linear programming formulation of the TDM realization problem for this class, and for a fixed $m$ ({\it i.e.} free of $d$), has at most linear in $d$ number of constraints and variables.  
\end{proposition}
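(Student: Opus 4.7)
The plan is to combine the probabilistic viewpoint of Corollary \ref{BCprobcor} with the intrinsic sparsity of the $m$-dependent Toeplitz structure. By Theorem \ref{BCTD}, a matrix $\mathbf{T} \in \mathcal{T}_{d:m}$ is a TDM if and only if $(1/d)\mathbf{T} \in \calB_d$, so Theorem \ref{FABC} recasts the realization problem as the feasibility LP $\mathbf{C}_d \mathbf{q}_d = \mathbf{p}_d$, $\mathbf{q}_d \geq \mathbf{0}$. Here $\mathbf{p}_d$ has a unit first entry, $d$ marginal entries equal to $1/d$, entries $\alpha_k/d$ for the $d-k$ pairs at Toeplitz distance $k \in \{1,\ldots,m\}$, and zero entries for every pair with $|i-j| > m$.

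The key observation I would exploit is that each zero entry of $\mathbf{p}_d$ produces an LP constraint of the form $\sum_{\mathbf{i} \in \{0,1\}^d : i_p = i_q = 1} q_{\mathbf{i}} = 0$, which, coupled with $\mathbf{q}_d \geq \mathbf{0}$, forces every variable indexed by a tuple $\mathbf{i}$ having two $1$'s at positions more than $m$ apart to vanish. Collecting these across all such pairs $(p,q)$, the only surviving variables are those indexed by binary tuples $\mathbf{i}$ whose support has diameter at most $m$. Once these variables are removed from the LP, the zero-right-hand-side rows collapse to trivial $0=0$ identities and can be dropped from the constraint list.

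I would then carry out the two counts. For variables, a non-zero surviving $\mathbf{i}$ is specified by its smallest support index $k \in \{1,\ldots,d\}$ together with the values of the subsequent $\min(m, d-k)$ coordinates, giving at most $1 + d \cdot 2^m$ atoms --- linear in $d$ for fixed $m$. For constraints, only the normalization, the $d$ marginal equations, and the $\sum_{k=1}^{m}(d-k) = md - \binom{m+1}{2}$ equations associated with the non-zero diagonals remain, again linear in $d$. Together these two bounds produce the claimed LP size.

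The only subtlety is formalizing the implication that zero right-hand sides, coupled with non-negativity, impose the structural sparsity of $\mathbf{q}_d$; once that is recognized, the counts are purely combinatorial and present no real obstacle. I would also remark that further reduction is available by invoking the centro-symmetric invariance of any symmetric Toeplitz matrix via Lemma \ref{Symm}, which would roughly halve the surviving variable count, though without changing the linear-in-$d$ order that the proposition requires.
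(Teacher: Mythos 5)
Your proposal is correct and follows essentially the same route as the paper's proof: reduce via Theorem \ref{BCTD} and Theorem \ref{FABC} to the feasibility system $\mathbf{C}_d\mathbf{q}_d=\mathbf{p}_d$, $\mathbf{q}_d\geq\mathbf{0}$, observe that each zero entry of $\mathbf{p}_d$ together with non-negativity annihilates every $q_{\mathbf{i}}$ whose support contains two indices more than $m$ apart, and then count the surviving variables and constraints. Your bound $1+d\cdot 2^m$ is a slightly looser version of the paper's exact count $(d-m+1)2^m$, and your closing remark on centro-symmetry matches the paper's subsequent remark, so there is nothing substantive to add.
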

\begin{proof}
From Theorem \ref{FABC}, we know that for a given $d$-dimensional matrix $\mathbf{T}$, the verification that it represent a TDM is equivalent to verifying the existence of $\mathbf{x}\geq 0$ such that $\mathbf{C}_d\mathbf{x}=\mathbf{p}_d$, where $\mathbf{p}_d$ is constructed using the elements of $(1/d)\mathbf{T}$. In this formulation, $\mathbf{x}$ has length $2^d$ and the number of constraints is $d(d+1)/2+1$. We argue below that $\mathbf{x}$ lies in a subspace of dimension which is linear in $d$, as well as the number of constraints can be reduced to be linear in $d$.  
 
Note that in the case that $\mathbf{T}$ in $\mathcal{T}_{d:m}$ is a TDM, $(1/d)\mathbf{T}$ is a BCM according Theorem \ref{BCTD}. Appealing to the probabilistic viewpoint, we consider for $(1/d)\mathbf{T}$ the corresponding equation $\mathbf{C}_d\mathbf{q}_d=\mathbf{p}_d$. Let $A_i$, $i=1,\ldots,d$, be $d$ events on some probability space corresponding to $(1/d)\mathbf{T}$ as specified by \eqref{BCprob} and underlying \eqref{qdef1}, \eqref{qdef2}, and \eqref{defn:pd}. Since scaled TDMs will have non-zero main diagonal elements, the number of non-zero elements of $\mathbf{p}_d$ is at most one more that the cardinality of the set 
\[
\left\{(i,j): \vert i-j\vert \leq m,\; 1\leq i< j \leq d \right\},
\]
which equals $(1/2)(2d-m)(m+1)+1$. Notice that a zero element of $\mathbf{p}_d$ corresponding to $\Pr{A_j\cap A_k}$, for $1\leq j<k\leq d$, forces the elements of $\mathbf{q}_d$ corresponding to indexes in 
\[
\left\{\textbf{i}\in\{0,1\}^d \big\vert i_j=i_k=1 \right\}
\]
to be zero. From this observation it follows that for $\mathbf{p}_d$ corresponding to matrices in $\mathcal{T}_{d:m}$, the non-zero elements of $\mathbf{q}_d$ have indexes in 
\begin{equation}\label{theset}
\left\{\textbf{i}\in\{0,1\}^d \big\vert i_j,i_k=1, j\neq k \implies \vert j-k\vert \leq m \right\},
\end{equation}
with the cardinality of this set being $(d-m+1)2^{m}$. Let $\mathbf{C}'_d$ equal the matrix $\mathbf{C}_d$ without its rows corresponding to zero elements of $\mathbf{p}_d$ and containing only its columns  corresponding to indexes in the set given in \eqref{theset}. Also, let $\mathbf{p}'_d$ equal $\mathbf{p}_d$ without its zero elements. The above discussion implies that for matrices in $\mathcal{T}_{d:m}$,  verifying the existence of $\mathbf{x}\geq \mathbf{0}$ such that $\mathbf{C}_d\mathbf{x}=\mathbf{p}_d$ is equivalent to verifying the existence of $\mathbf{x}\geq \mathbf{0}$ such that $\mathbf{C}'_d\mathbf{x}=\mathbf{p}'_d$. This latter problem is equivalent to a linear programming problem with linear in $d$ number of variables and constraints. 
\end{proof}

\begin{remark} \label{prop:m-dependence:rem}
In the above proof, we note that we can further reduce the subspace containing $\mathbf{q}_d$ by using the fact that all Toeplitz matrices are centro-symmetric. It can be checked that this subspace has dimension equal to 
\[
\begin{cases}
(d-m+1)2^{m-1}+ 2^{\lceil m/2 \rceil -1}, &\hbox{ for $d$ even};\\
(d-m+1)2^{m-1}+ 2^{\lceil m/2 \rceil}, &\hbox{ for $d$ odd}.\\
\end{cases}
\]
It is worth mention that for general Toeplitz matrices, the reduction is only to a subspace of dimension as large as $2^{d-1}+2^{\lceil d/2 \rceil -1}$.
\end{remark}

In Proposition \ref{Toep}, we gave a sufficient but not necessary condition for a symmetric $d$-dimensional Toeplitz matrix to be a TDM. In the following proposition, we consider a $d$-dimensional TDM that corresponds to $d$ successive observations from a $2$-dependent stationary process; such TDMs can be parametrized as 
\begin{equation} \label{2dep}
\begin{bmatrix}
1 & \alpha & \beta & 0 & \cdots & 0 \\
\alpha & 1 & \alpha & \ddots & \ddots  & \vdots \\
\beta & \alpha & \ddots & \ddots & \ddots& 0 \\
0 & \ddots &\ddots & \ddots & \alpha &\beta \\
\vdots & \ddots  &\ddots & \alpha & 1 & \alpha\\
0 & \cdots & 0 &\beta & \alpha &1
\end{bmatrix}.
\end{equation}
Unlike the case of a general Toeplitz matrix, we are able to provide a necessary and sufficient condition on $\alpha$ and $\beta$ in order for \eqref{2dep} to be a TDM.  

\begin{proposition} \label{MA2}
For any dimension $d \geq 6$, the two-dependence matrix of the form \eqref{2dep} is in $\mathcal{T}_d$ if and only if $\alpha$ and $\beta$ satisfy the following constraints:
\begin{eqnarray}\label{cond2dep}
\begin{cases}
\alpha, \beta \geq 0; \\
\alpha+4\beta \leq 2;  \\
 2\alpha-\beta \leq 1.
\end{cases}
\end{eqnarray}
\end{proposition}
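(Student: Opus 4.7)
My plan is to reduce to the BCM realization problem via Theorem \ref{BCTD} and then adopt the probabilistic viewpoint of Corollary \ref{BCprobcor}. That is, I want to show that events $A_1,\ldots,A_d$ exist on a probability space with $\Pr{A_i}=1/d$, $\Pr{A_i\cap A_{i+1}}=\alpha/d$, $\Pr{A_i\cap A_{i+2}}=\beta/d$, and $\Pr{A_i\cap A_j}=0$ for $|i-j|\geq 3$, if and only if the four stated inequalities hold. Throughout I will use the auxiliary triple probabilities $t_i:=\Pr{A_i\cap A_{i+1}\cap A_{i+2}}$ for $i=1,\ldots,d-2$; the $2$-dependence structure immediately forces $0\leq t_i\leq \Pr{A_i\cap A_{i+2}}=\beta/d$.

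For necessity, $\alpha,\beta\geq 0$ is immediate. To obtain $2\alpha-\beta\leq 1$, apply inclusion-exclusion to $\Pr{A_2\setminus(A_1\cup A_3)}\geq 0$ to get $t_1\geq (2\alpha-1)/d$, then combine with $t_1\leq \beta/d$. For $\alpha+4\beta\leq 2$ I will derive two families of auxiliary inequalities. First, ``exactly $A_i$ occurs'' on a middle index $i$ (i.e., $3\leq i\leq d-2$) gives by inclusion-exclusion
\begin{eqnarray*}
1/d - 2(\alpha+\beta)/d + (t_{i-2}+t_{i-1}+t_i) \geq 0;
\end{eqnarray*}
second, ``exactly $A_k$ and $A_{k+1}$ occur'' for $2\leq k\leq d-2$ similarly gives $\alpha/d - t_{k-1} - t_k \geq 0$. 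Since $d\geq 6$, I can apply the first at both $i=3$ and $i=4$ and the second at $k=2,3,4$. Summing the two middle-index bounds yields $t_1+2t_2+2t_3+t_4 \geq (4\alpha+4\beta-2)/d$, while summing the three pair-only bounds yields $t_1+2t_2+2t_3+t_4 \leq 3\alpha/d$. Eliminating the $t_i$'s gives $4\alpha+4\beta-2 \leq 3\alpha$, i.e., $\alpha+4\beta\leq 2$.

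For sufficiency I will construct the required events on $([0,1],\mathcal{L},\lambda)$ by specifying probabilities of the atoms ``exactly $S$ occurs'', supported on $S\subseteq\{1,\ldots,d\}$ of diameter at most $2$ (as forced by $2$-dependence), and then realizing them as a partition of $[0,1]$ into intervals with $A_i=\bigcup_{S\ni i} I_S$. Setting $c := \min(\alpha/(2d),\beta/d)$, I would take $q_{\{i,i+1,i+2\}}=c$, $q_{\{i,i+2\}}=\beta/d-c$, and $q_{\{i,i+1\}}=\alpha/d-2c$ for $2\leq i\leq d-2$ (respectively $\alpha/d-c$ at the boundary indices $i\in\{1,d-1\}$, where only one triple $q_{\cdot}$ adjoins the pair), with the singleton probabilities $q_{\{i\}}$ and $q_\emptyset$ then forced by the marginal and total-probability constraints. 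The nontrivial check is non-negativity of the middle singleton probability $q_{\{i\}}=1/d-2(\alpha+\beta)/d+3c$; in the case $\beta\geq\alpha/2$ (so $c=\alpha/(2d)$) this reduces to $\alpha+4\beta\leq 2$, while in the case $\beta<\alpha/2$ (so $c=\beta/d$) it reduces to $2\alpha-\beta\leq 1$, both of which are assumed. The boundary singleton and $q_\emptyset$ conditions are then easy to verify as consequences of $\alpha+\beta\leq 2$ and the remaining inequalities.

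The main obstacle is the necessity direction for $\alpha+4\beta\leq 2$. A single middle-index ``only-$A_i$'' constraint together with the crude bounds $t_k+t_{k+1}\leq\alpha/d$ and $t_k\leq\beta/d$ yields only the weaker $\alpha+\beta\leq 1$; the correct linear combination that annihilates all of $t_1,\ldots,t_4$ simultaneously requires using two adjacent middle-index constraints and three consecutive pair-only constraints, and this is precisely where the dimensional hypothesis $d\geq 6$ is indispensable (so that $i=3,4$ and $k=2,3,4$ are all admissible).
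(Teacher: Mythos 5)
Your proposal is correct, and it follows the paper's overall strategy (reduce to the BCM problem via Theorem \ref{BCTD}, then argue with events and atom probabilities as in Corollary \ref{BCprobcor}); your sufficiency construction is in substance identical to the paper's: your $c=\min(\alpha/(2d),\beta/d)$ is the paper's $\nu=\min\{\kappa/2,\mu\}$, your atoms of diameter at most two match its Venn-diagram partition, and your critical non-negativity check of the middle singleton $1/d-2(\alpha+\beta)/d+3c$ is exactly the paper's $\xi_3\geq 0$ case split, with the remaining atoms and $q_\emptyset$ handled the same way (via $\xi_1\geq\xi_2\geq\xi_3$ and Boole's inequality). Where you genuinely diverge is the necessity direction: the paper first passes to the $6\times 6$ principal submatrix and invokes its symmetry machinery (Lemma \ref{Symm}/Theorem \ref{reduce_thm}, via centro-symmetry) to collapse the four triple-intersection unknowns to two, then reads the inequalities off a Venn diagram; you keep all four triple probabilities $t_1,\dots,t_4$ and eliminate them by an explicit linear combination of the ``exactly $A_i$'' constraints at $i=3,4$ and the ``exactly $A_k\cap A_{k+1}$'' constraints at $k=2,3,4$ (your derivation of $2\alpha-\beta\leq 1$ from $t_1\geq(2\alpha-1)/d$ and $t_1\leq\beta/d$ likewise checks out). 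Your version is more self-contained — it needs no symmetrization lemma and makes visible exactly where $d\geq 6$ enters — while the paper's symmetrization buys a two-variable bookkeeping problem and ties the argument to its general reduction theorem; these are the de-symmetrized and symmetrized forms of the same family of atom non-negativity constraints. One small presentational point: state the index ranges of your atoms explicitly (triples $\{i,i+1,i+2\}$ and pairs $\{i,i+2\}$ for all $1\leq i\leq d-2$, with the boundary adjustment only for the pairs $\{1,2\}$ and $\{d-1,d\}$), since as written the quantifier ``for $2\leq i\leq d-2$'' could be misread as excluding $\{1,2,3\}$ and $\{1,3\}$, which the marginal constraints require.
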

\begin{proof}
{\it Necessity Part:} Since the form of the matrix in \eqref{2dep} is such that the $6$ dimensional principal sub-matrix is again of the same form, we note that it suffices to show that the constraints in \eqref{cond2dep} are necessary for the $d=6$ case. 

We denote a general $d$ dimensional matrix of the form in \eqref{2dep} by  $\mathbf{T}_d(\alpha,\beta)$. Let\\ $\mathbf{T}_6(\alpha,\beta) \in \mathcal{T}_6$. By Corollary \ref{BCprobcor}, we know that $\mathbf{B}_6:=\mathbf{T}_6/6 \in \mathcal{B}_6$. Let the sets $A_i$, for $i=1,\ldots,6$, be those corresponding to $\mathbf{B}_6$ and as specified in \eqref{BCprob}. Since $\mathbf{B}_6$ is centro-symmetric, 
the permutation 
\[
\begin{pmatrix}
1 & 2 & 3 &4 & 5 & 6 \\
6& 5 & 4 & 3 & 2 & 1
\end{pmatrix}
\]
belongs to $\mathcal{G}_{\mathbf{B}_6}$. Thus by Theorem \ref{reduce_thm}, without loss of generality, we could assume that for some $\gamma_1,\gamma_2$,  
\begin{multline*}
\Pr{A_1 \cap A_2 \cap A_3}=\Pr{A_4 \cap A_5 \cap A_6}=\gamma_1, \\ \hbox{ and } \Pr{A_2 \cap A_3 \cap A_4}=\Pr{A_3 \cap A_4 \cap A_5}=\gamma_2.    
\end{multline*}
Denoting $\alpha':=\alpha/6$ and $\beta':=\beta/6$, this results in a Venn diagram for the sets $A_i$, for $i=1,\ldots,6$, as given in Figure \ref{2depnecVenn}. 

	\begin{figure}
\centering
\begin{tikzpicture}[thick,scale=1, every node/.style={scale=0.6}]
  \tikzset{venn circle/.style={draw,circle,minimum width=4cm,fill=#1,opacity=0.5}}
  \draw[very thick] (6,-3.5) -- (-3,-3.5);
  \draw[very thick] (-3,-3.5) -- (-3,3);
  \draw[very thick] (6,3) -- (-3,3);
  \draw[very thick] (6,3) -- (6,-3.5);
  \draw (-1,1) circle (1.5cm);
  \draw (1,1) circle (1.5cm);
  \draw (0,-.7) circle (1.5cm);
  \draw (2,-.7) circle (1.5cm);
  \draw (3,1) circle (1.5cm);
  \draw (4,-.7) circle (1.5cm);
   \node at (-1.2,1.7) {\Large  $A_1$}; 
   \node at (1,1.7) {\Large   $A_3$}; 
   \node at (3.1,1.7) {\Large   $A_5$}; 
   \node at (-.2,-1.1) {\Large   $A_2$}; 
   \node at (2,-1.1) {\Large   $A_4$}; 
   \node at (4.3,-1.1) {\Large   $A_6$}; 
     \node at (0,0.4) {\large $\gamma_1$}; 
     \node at (1,-.1) {\large $\gamma_2$}; 
     \node at (2,.4) {\large $\gamma_2$}; 
     \node at (3,-.1) {\large $\gamma_1$}; 
    \node at (-.7,0) { \large$\alpha'-\gamma_1$}; 
    \node at (0,1.2) {\large $\beta'-\gamma_1$}; 
     \node at (2,1.2) {\large  $\beta'-\gamma_2$}; 
     \node at (1,-.8) {\large  $\beta'-\gamma_2$}; 
     \node at (3,-.8) {\large $\beta'-\gamma_1$}; 
      \node at (3.8,0.4) { \large  $\alpha'-\gamma_1$}; 
	\node[rotate=60] at (.5,.2) { $\alpha'-\gamma_1-\gamma_2$}; 
     \node[rotate=-45] at (1.5,.2) {$\alpha'-2\gamma_2$}; 
     \node[rotate=60] at (2.5,.2) {$\alpha'-\gamma_1-\gamma_2$}; 
  \node[below] at (3,-2.5 ) {\Large $A_1^c \cap \cdots \cap A_6^c$}; 
\end{tikzpicture}
\caption{Venn Diagram for Sets $A_i$, $i=1,\ldots, 6$}
\label{2depnecVenn}
\end{figure}
Imposing the non-negativity constraint on the probability of each set in the partition generated by the sets $A_i$, for $i=1,\ldots,6$, and the constraint that $\Pr{A_i}=1/6$, for $i=1,\ldots,6$, we have the following inequalities:
\begin{eqnarray}
0 \leq \gamma_1,\gamma_2 \leq \beta'; \label{c1} \\ 
\alpha'-\gamma_1-\gamma_2 \geq 0; \label{c2} \\
\alpha'-2\gamma_2 \geq 0 ;\label{c3} \\ 
2\alpha'+2\beta'-\gamma_1-2\gamma_2 \leq 1/6. \label{c4}
\end{eqnarray}
It is easy to see that (\ref{c1}) and (\ref{c4}) imply $2\alpha-\beta \leq 1$, and ${\alpha + 4\beta \leq 2}$ is implied by (\ref{c2}), (\ref{c3}) and (\ref{c4}). This completes the proof of the necessity part. 

{\it Sufficiency Part:} In the following, we will show that $\mathbf{T}_d(\alpha,\beta)$, for $\alpha,\beta$ satisfying in \eqref{cond2dep}, belongs to $\mathcal{T}_d$, for any $d\geq 6$. By looking at the principal sub-matrices, this implies sufficiency of the conditions in \eqref{cond2dep}, for all $d\geq 3$. By Corollary \ref{BCprobcor}, we know that $\mathbf{B}_d:=\mathbf{T}_d/d \in \mathcal{B}_d$. Let the sets $A_i$, for $i=1,\ldots,d$, be those corresponding to $\mathbf{B}_d$, and as specified in \eqref{BCprob}. 

Let $d \geq 6$. We define, 
\begin{equation} \label{defnkappa}
\kappa:=\alpha/d; \quad \mu:=\beta/d; \quad \hbox{and } \nu=\min\{\kappa/2,\mu\}.
\end{equation}
Let $A_i$, for $i=1,\ldots,d$, be events on the probability space $([0,1],\mathcal{L},\lambda)$ such that they agree with the Venn diagram given in Figure \ref{2depsuffVenn} of the Appendix. Note that in our construction, sets $A_i$ and $A_j$ for $j-i>2$ are disjoint. It can be argued that there are $4d-4$ sets in the partition which can be mapped to $\{0,1\}^d$ by using the representation $\cap_{i=1}^d A_i^{j_i}$, with $\mathbf{j}:=(j_1,\ldots,j_d)\in\{0,1\}^d$. Among the $4d-4$ binary vectors, $4d-8$ are of the form 
\begin{equation}\label{binvec1}
(\underbrace{0,\ldots,0}_{m},1,p,q,\underbrace{0,\ldots,0}_{n}), \quad m,n\geq 0;\; m+n=d-3; (p,q)\in\{0,1\}^2,
\end{equation}
and the rest four are of the form 
\begin{equation}\label{binvec2}
(\underbrace{0,\ldots,0}_{d-2},p,q), \quad (p,q)\in\{0,1\}^2.
\end{equation}
Since the sets in the partition generated by $A_i$, $i=1,\ldots,d$, can be assigned to be disjoint intervals, for well definition all we need to check is that the probability of these sets in the partition are non-negative and sum to one. The probabilities of the $4d-5$ sets, excluding the set $A_1^\mathsf{c}\cap\cdots\cap A_d^\mathsf{c}$, are as given below:
\begin{equation}
\Pr{\cap_{i=1}^d A_i^{j_i}}=
\begin{cases}
\xi_1, & \mathbf{j}\in \{(p,0,\ldots,0,q)|p,q \in \{0,1\}, \; p+q=1 \};\\
\kappa-\nu, & \mathbf{j}\in \{(p,p,0,\ldots,0,q,q)|p,q \in \{0,1\}, \; p+q=1 \}; \\
\xi_2, & \mathbf{j}\in \{(0,p,0,\ldots,0,q,0)|p,q \in \{0,1\}, \; p+q=1 \};   \\
\mu-\nu,  & \mathbf{j}\in \{(\underbrace{0,\ldots,0}_{m},1,0,1,\underbrace{0,\ldots,0}_{n})|m,n \geq 0, \; m+n=d-3\}; \\
\nu,  & \mathbf{j}\in \{(\underbrace{0,\ldots,0}_{m},1,1,1,\underbrace{0,\ldots,0}_{n})|m,n \geq 0, \; m+n=d-3\};   \\
\kappa-2\nu, & \mathbf{j}\in \{(\underbrace{0,\ldots,0}_{m+1},1,1,\underbrace{0,\ldots,0}_{n+1})|m,n \geq 0, \; m+n=d-4\};  \\
\xi_3, & \mathbf{j}\in \{(\underbrace{0,\ldots,0}_{m+2},1,\underbrace{0,\ldots,0}_{n+2})|m,n \geq 0, \; m+n=d-5\},
\end{cases}
\end{equation}
where 
\begin{equation}
\xi_i=
\begin{cases}
 1/d-\kappa-\mu+\nu, & i=1; \\
1/d-2\kappa-\mu+2\nu,& i=2; \\ 
1/d-2\kappa-2\mu+3\nu, & i=3.
\end{cases}
\end{equation}

We claim that it suffices to check that the sets in the partition that are contained in $\cup_{i=1}^d A_i$ have non-negative probabilities (These form all binary vectors listed in \eqref{binvec1} and all except the one with $(p,q)=(0,0)$ in \eqref{binvec2}). This is so since $\Pr{A_i}=1/d$, for $i=1,\ldots,d$, implies by Boole's inequality that we have $\Pr{\cap_{i=1}^d A_i^\mathsf{c}}\geq 0$, and hence this latter probability can be assigned a value to make the probabilities of the sets in the partition add up to one. Towards this end, we note that \eqref{defnkappa} and \eqref{cond2dep} trivially imply 
\[
0\leq \kappa-2\nu \leq \kappa-\nu; \; 0\leq \nu,\mu-\nu; \text{ and }\xi_3\leq \xi_2\leq \xi_1.
\]
So all that remains to be shown is that $\xi_3\geq 0$. Towards this end, note that
\[
\xi_3=\begin{cases}
\frac{1}{d}-2\kappa+\mu=\frac{1}{d}(1-2\alpha+\beta), &\kappa\geq 2\mu;\\
\frac{1}{d}-\frac{\kappa}{2} - 2\mu=\frac{1}{2d}(2-\alpha-4\beta), &\kappa< 2\mu.
\end{cases}
\]
This combined with \eqref{cond2dep} yields $\xi_3\geq0$. The proof of the sufficiency part is now complete. 
\end{proof}

\begin{remark}\label{ma2-d<6}
For $d=3,4,$ and $5$, the necessary and sufficient conditions on $\alpha, \beta$ for a matrix of the form in \eqref{2dep} to be in $\mathcal{T}_d$ are listed in the table below. These were derived by using the probabilistic method discussed above.
\begin{center}
    \begin{tabular}{| l | l | }
    \hline
	$d$ & Conditions \\ \hline \hline
    3& $\alpha \geq 0$; \;$0 \leq \beta \leq 1 $; \;$2 \alpha-\beta \leq 1$.  \\ \hline
    4& $\alpha \geq 0$; \;$\beta \geq 0$; \;$\alpha+\beta \leq 1$; \;$2 \alpha-\beta \leq 1$. \\ \hline
    5& $\alpha \geq 0$; \;$0\leq \beta \leq 1/2$; \;$\alpha+\beta \leq 1$; \;$2 \alpha-\beta \leq 1$.  \\  \hline
    \end{tabular}
\end{center}
\end{remark}

\begin{remark}
Here we present a class of two-dependent stochastic processes exhibiting upper tail dependence, which is a particular case of the general time series model presented in \citet{Zhang2006}. Towards constructing this process, let $(X_{i}, i \in \mathbb{Z})$ be an array of iid random variables with unit Fr\'{e}chet distribution. The two-dependent stochastic process $(Y_{i}, i \in \mathbb{Z})$ is then defined as 
\[
Y_i=  \max \{cX_{i-2}, bX_{i-1}, aX_{i}\}, \qquad i \in \mathbb{Z},
\]
where $a$, $b$, and $c$ are nonnegative constants. It is easily checked that $Y_i$, $i \in \mathbb{Z}$, also has unit Fr\'{e}chet marginals with the additional constraint $a+b+c=1$; we will impose this constraint in the following. Moreover, this process is also a simple max-stable process, see \citet{Fiebig2017}. 

The lag $1$ upper tail dependence coefficient, that is the tail dependence coefficient between $Y_i$ and $Y_{i+1}$, for $i\in \mathbb{Z}$, is given by 
\begin{equation*} 
\begin{split}
\bar{\chi}_{i,i+1}& = \lim_{u \uparrow \infty} \frac{\Pr{Y_i \geq u,Y_{i+1} \geq u}}{\Pr{Y_{i+1} \geq u}} \\
&= \lim_{u \uparrow \infty}\frac{1-e^{-\frac{a\wedge b}{u}}+1-e^{-\frac{b\wedge c}{u}}+o \left(1-e^{-\frac{1}{u}}\right)}{1-e^{-\frac{1}{u}}}=a\wedge b+b\wedge c.
\end{split}
\end{equation*}
Similarly, the lag $2$ tail dependence coefficient is given by 
\[
\bar{\chi}_{i,i+2} = \lim_{u \uparrow \infty} \frac{\Pr{Y_i \geq u,Y_{i+2} \geq u}}{\Pr{Y_{i+2} \geq u}}= \lim_{u \uparrow \infty}\frac{1-e^{-\frac{a\wedge c}{u}}+o \left(1-e^{-\frac{1}{u}}\right)}{1-e^{-\frac{1}{u}}}=a\wedge c.
\]
Since the process is a $2$-dependence model, the lag $k$ tail dependence coefficients for $k\geq 3$ equal $0$. 
It is easily checked that the set of lagged tail dependence coefficients attained by this model remains the same even without the constraint $a+b+c=1$, although the marginals would deviate from unit Fr\'{e}chet. 

In the notation of Proposition \ref{MA2}, we thus have $\alpha=a\wedge b+b\wedge c$ and $\beta= a\wedge c$.
As shown in Figure \ref{alphabeta_maxstable}, we can attain any valid $(\alpha,\beta)$ vector as specified in Proposition \ref{MA2} by suitably choosing the vector of parameters $(a,b,c)$. Moreover, this model gives a stochastic process indexed by $\mathbb{Z}$, whence confirming that the conclusion of Proposition \ref{MA2} holds for the infinite dimensional case ($d=\infty$) as well. In particular, in this remark, we have given an alternate proof for the sufficiency part of Proposition \ref{MA2}, which moreover extends its necessary and sufficient condition to the $d=\infty$ case. 
\end{remark}
\begin{figure}
\centering
\begin{tikzpicture}[thick,scale=7.5, every node/.style={scale=0.75}]\footnotesize
 \pgfmathsetmacro{\xone}{0}
 \pgfmathsetmacro{\xtwo}{.8}
 \pgfmathsetmacro{\yone}{0}
 \pgfmathsetmacro{\ytwo}{.63}

\begin{scope}<+->;
  \draw[step=.08333cm,gray,very thin] (\xone,\yone) grid (.75,.583);
  \foreach \x/\xtext in { .5/{ \frac{1}{2}}, .66667/{\frac{2}{3}}}
  \draw[black,xshift=\x cm] (0,.01) -- (0,0) node[below] {$\xtext$};
  \foreach \y/\ytext in { .33333/\frac{1}{3},.5/{\frac{1}{2}}}
    \draw[black, yshift=\y cm] (.01,0) -- (0,0)   node[left] {$\ytext$};
 \draw[black] (0,0) node[anchor=north east] {$0$};
  \draw[thick,->] (\xone, 0) -- (\xtwo, 0) node[right] {$\alpha$};
  \draw[thick,->] (0, \yone) -- (0, \ytwo) node[above] {$\beta$};
   \draw[thick] (0, .5) -- (.66667, .33333) ;
     \draw[thick] (.66667, .33333) -- (.5, 0) ;  
      \draw[thick,dashed] (0, 0) -- (.66667, .33333); 
     \fill[gray!100,nearly transparent] (0,.5) -- (.66667,.33333) -- (.5,0) -- (0,0)--cycle;
     \draw[black] (.08,.32) node[right] {{ $\begin{cases}
     	a=\beta; \\
     	b=\alpha/2; \\
     	c=1-\alpha/2-\beta.
     \end{cases}$}};
     \draw[black] (.32,.1) node[right] {{\small $\begin{cases}
     	a=\beta; \\
     	b=\alpha-\beta; \\
     	c=1-\alpha.
     \end{cases}$}};
\end{scope}
\end{tikzpicture}
\caption{Coverage of $(\alpha,\beta)$ by Max-Stable Processes}
\label{alphabeta_maxstable}
\end{figure}
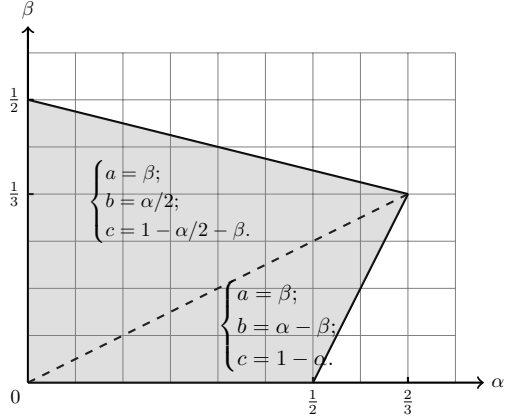

Consider a setup where we have two classes of risks; for example, each class may comprise of stocks of companies domiciled in the same country. While it is common to have significant information on intra-class joint behavior of risks, the same is not true for inter-class dependence. In such situations, it is natural to default to a non-informative assumption on inter-class dependence. In the case of tail dependence, and in particular in the context of TDMs, this translates to assuming a constant inter-class tail dependence coefficient. So this leads to the following problem: Given two $d_1$ and $d_2$ dimensional TDMs, say $\mathbf{T}_1$ and $\mathbf{T}_2$ respectively, for which values of $\gamma$ is the matrix 
\begin{eqnarray*}
\begin{bmatrix}
\mathbf{T}_1 & \gamma \mathbf{J}\\
\gamma \mathbf{J} & \mathbf{T}_2
\end{bmatrix}
\end{eqnarray*}
a valid $d_1+d_2$ dimensional TDM?  In the following example, we impose a equi-correlation structure on $\mathbf{T}_1$ and $\mathbf{T}_2$, and solve the combined problem instead of assuming that $\mathbf{T}_1$ and $\mathbf{T_2}$ are TDMs. This allows for exploring how between-class tail dependence is impacted by with-in class tail dependence, and vice versa.

\begin{example} \label{ex:two Sector}
Let $\mathbf{X}=(X_1,...,X_{d_1})$ and $\mathbf{Y}=(Y_1,...,Y_{d_2})$, for $d_1,d_2\geq 2$, be two random vectors such that each vector has a TDM with constant off-diagonal elements, which we denote by $\alpha$ and $\beta$, respectively.  As motivated above, we assume that the tail dependence coefficient between $X_i$ and $Y_j$ equals $\gamma$, for $1\leq i\leq d_1$ and $1\leq j\leq d_2$. These assumptions lead to a combined TDM for the $d_1+d_2$ dimensional random vector $(\mathbf{X},\mathbf{Y})$, denoted by $\mathbf{T}_d$, of the form
\begin{eqnarray}\label{2secmat}
                \begin{bmatrix}
                        1 &\alpha& \cdots & \alpha & \gamma &\gamma & \cdots & \gamma \\
                        \alpha &1& \cdots & \alpha & \gamma &\gamma & \cdots & \gamma \\
                        \vdots & \vdots &\ddots &\vdots &\vdots & \vdots &\ddots & \vdots \\
                        \alpha &\alpha & \cdots & 1 & \gamma & \gamma & \cdots & \gamma \\
                        \gamma & \gamma &  \cdots & \gamma & 1 & \beta & \cdots &\beta \\
                        \gamma & \gamma &  \cdots & \gamma & \beta & 1 & \cdots &\beta \\
                        \vdots & \vdots &\ddots &\vdots &\vdots & \vdots &\ddots & \vdots \\
                        \gamma & \gamma &  \cdots & \gamma & \beta & \beta & \cdots & 1
                \end{bmatrix}=
        \begin{bmatrix}
        (1-\alpha)\mathbf{I}_{d_1}+\alpha\mathbf{J}_{d_1} & \gamma \mathbf{J}_{d_1 \times d_2} \\
        \gamma \mathbf{J}_{d_2 \times d_1} & (1-\beta)\mathbf{I}_{d_2}+\beta\mathbf{J}_{d_2} \\
        \end{bmatrix}.
        \end{eqnarray}
The question of interest is the polytope of points $(\alpha,\beta,\gamma)$, say $\mathcal{P}_{d_1,d_2}$, that would make the above matrix a TDM. 

We begin by observing that Proposition \ref{Toep} implies that the matrix $(1-\xi)\mathbf{I}_{d}+\xi\mathbf{J}_{d}$ is in $\mathcal{T}_d$ for all $0\leq \xi \leq 1$. In particular, this implies the existence of a distribution $F_{d,\xi}$ which corresponds to the TDM $(1-\xi)\mathbf{I}_{d}+\xi\mathbf{J}_{d}$. Let us, for $0\leq \alpha,\beta\leq 1$, define the joint distribution $G_{\alpha,\beta}$ to be the product of $F_{d_1,\alpha}$ and $F_{d_2,\beta}$. By using the independence of the initial $d_1$ coordinates with the latter $d_2$ coordinates under $G_{\alpha,\beta}$, it is clear that its $d_1+d_2$ dimensional TDM is given by 
    \[
    \begin{bmatrix}
        (1-\alpha)\mathbf{I}_{d_1}+\alpha\mathbf{J}_{d_1} &  \mathbf{0}_{d_1 \times d_2} \\
         \mathbf{0}_{d_2 \times d_1} & (1-\beta)\mathbf{I}_{d_2}+\beta\mathbf{J}_{d_2} \\
        \end{bmatrix}.
    \]
This observation, the fact that the set of TDM is a convex set, and that the tail dependence coefficients are bounded below by $0$ permits us to equivalently pose the problem of interest as one of finding the upper bound for $\gamma$, say  $\gamma_u(\alpha,\beta)$, for values of $(\alpha,\beta)$ in $[0,1]^2$ for which the matrix in \eqref{2secmat} is a TDM. In other words, 
\[
\gamma_u(\alpha,\beta):=\sup\left\{\gamma \vert (\alpha,\beta,\gamma) \in \mathcal{P}_{d_1,d_2}\right\}.
\]

By Theorem \ref{BCTD}, the problem of determining $\gamma_u(\alpha,\beta)$ is equivalent to finding the upper bound of $d\gamma'$  such that the matrix $\mathbf{B}_d$ given by 
  \begin{eqnarray*}
        \begin{bmatrix}
        (1/d-\alpha')\mathbf{I}_{d_1}+\alpha'\mathbf{J}_{d_1} & \gamma' \mathbf{J}_{d_1 \times d_2} \\
        \gamma' \mathbf{J}_{d_2 \times d_1} & (1/d-\beta')\mathbf{I}_{d_2}+\beta'\mathbf{J}_{d_2} \\
        \end{bmatrix},
        \end{eqnarray*}
where $ \alpha'=\alpha/d,\beta'=\beta/d$, is in $\mathcal{B}_d$. Since $\mathcal{B}_d$ is closed, from Corollary \ref{BCprobcor}, we know that for every $\gamma'\leq \gamma_u(\alpha,\beta)/d$, there exists events $A_i,A_{d_1+j}$, $i=1,\ldots, d_1$, $j=1,\ldots, d_2$ on our canonical probability space $([0,1],\mathcal{L},\lambda)$ such that $\mathbf{B}_d$ has the form given in (\ref{BCprob}). Given the structure of $\mathbf{B}_d$, it is straightforward to see that $\mathcal{G}_{\mathbf{B}_d}$ is isomorphic to $\mathcal{S}_{d_1}\times \mathcal{S}_{d_2}$, with cardinality $d_1!d_2!$. 
The linear programming formulation that follows uses the reduction of Theorem \ref{reduce_thm}. Defining for $i=0,1,\ldots,d_1$ and $j=0,1,\ldots,d_2$,
\begin{multline*}
x_{i,j}:= \Pr{A_1^{\delta_1} \cap \cdots \cap A_{d_1}^{\delta_{d_1}}\cap A_{d_1+1}^{\delta_{d_1+1}} \cap \cdots \cap A_{d_1+d_2}^{\delta_{d_1+d_2}}}, \\
\hbox{ for } \mathbf{\delta}\in\{0,1\}^{d} \hbox{ satisfying } \sum_{j=1}^{d_1} \delta_{j}=i \hbox{ and } \sum_{k=1}^{d_2} \delta_{d_1+k}=j,
\end{multline*}

$\gamma_u(\alpha,\beta)/d$ is seen as the optimal value of the following LP problem:
\begin{equation}\label{two:sector:lp}
  \begin{array}{lll}
&\text{maximize} \quad\sum_{i=1}^{d_1}\sum_{j=1}^{d_2}\binom{d_1-1}{i-1}\binom{d_2-1}{j-1}x_{i,j}; &\\
&\text{subject to}  &\\   &\sum_{i=2}^{d_1}\sum_{j=0}^{d_2}\binom{d_1-2}{i-2}\binom{d_2}{j} x_{i,j}=\alpha'; & \sum_{i=0}^{d_1} \sum_{j=2}^{d_2} \binom{d_1}{i}\binom{d_2-2}{j-2} x_{i,j}=\beta'; \\
& & \\
&\sum_{i=1}^{d_1} \sum_{j=0}^{d_2} \binom{d_1-1}{i-1}\binom{d_2}{j} x_{i,j} =\frac{1}{d}; &\sum_{i=0}^{d_1} \sum_{j=1}^{d_2} \binom{d_1}{i}\binom{d_2-1}{j-1}x_{i,j} =\frac{1}{d}; \\
& & \\
                        &  \sum_{i=0}^{d_1} \sum_{j=0}^{d_2} \binom{d_1}{i}\binom{d_2}{j} x_{i,j} =1;  & x_{i,j} \geq 0, \quad 0\leq i\leq d_1, 0\leq j\leq d_2. 
\end{array}
\end{equation}

By using an LP solver (we used \texttt{IBM}\textsuperscript{\textregistered}  \texttt{ILOG CPLEX} in MATLAB\textsuperscript{\textregistered}), we computed $\gamma_u(\alpha,\beta)$ for values $(\alpha,\beta)$ over a uniform grid, say $\Delta$, in $[0,1]^2$. Since $\gamma_u(\alpha,\beta)$ is the upper bound for $\gamma$ such that $(\alpha,\beta,\gamma)$ belongs to the polytope $\mathcal{P}_{d_1,d_2}$, using a convex hull algorithm encapsulated in the function {\tt convhulln}\footnote{{\tt convhulln} is based on \citet{Barber1996}.} of MATLAB\textsuperscript{\textregistered}, on the set of points 
\[
\left\{\left(\alpha,\beta,\gamma_u(\alpha,\beta)\right) | (\alpha,\beta)\in\Delta \right\},
\]
we extract the extreme points of the top surface of $\mathcal{P}_{d_1,d_2}$. Also, the lower surface of $\mathcal{P}_{d_1,d_2}$ is the set $[0,1]^2\times\{0\}$, whose set of extreme points is  $\{0,1\}^2\times\{0\}$. Now using {\tt polymake} (see \citet{Gawrilow1997}) on the union of these sets of extreme points we extract the facets of the polytope $\mathcal{P}_{d_1,d_2}$. It is worth mention that to compute $\gamma_u(\cdot,\cdot)$ on a grid, solving the dual instead allows for the use of warm start, a technique which has shown in practice to provide significant speedup (for example, see  \citet{Vanderbei}). 

Using the above methodology we computed the polytope $\mathcal{P}_{d_1,d_2}$ for $(d_1,d_2)$ taking values in the set $\{(2,2),(2,4),(3,3),(4,4)\}$. The choice of these values for $(d_1,d_2)$ includes both symmetric and asymmetric cases, and varying values of $d_1+d_2$. In Figures \ref{matlab_2_2}, \ref{matlab_3_3}, \ref{matlab_2_4} and \ref{matlab_4_4} of the Appendix, we have images of these polytopes, and listed their facets and their vertices.  
\end{example}

\begin{remark}
We note that while the realization problem in general is likely not polynomial in complexity, restricted to the above class of parametric matrices the realization problem is polynomial in complexity. Towards showing this latter claim we note that both the number of variables and constraints in the LP stated in \eqref{two:sector:lp} are of the order $d^2$, and a simple change of variables results in coefficients bounded by $d^8$. The polynomial nature now follows by the algorithm for solving LPs given in \citet{Vaidya1989}, which solves the LP of \eqref{two:sector:lp} in $O(d^7\log(d))$ iterations.
\end{remark}

\begin{remark}\label{FreDeduc}
In the Appendix of \citet{Fiebig2017}, and in Tables 3.1 \& 3.3 of \citet{Strokorb2013}, a full description of the polytope of TDMs is specified for dimensions up to six. Since the above class has a parametrization that is linear in its parameters, one can deduce a $\mathcal{H}$-representation (see \citet{Fiebig2017}) of the polytope $\mathcal{P}_{d_1, d_2}$ from the facets of the polytope of the TDM in these dimensions. This $\mathcal{H}$-representation can be reduced to a facet representation using \texttt{Polymake}. This served as a check for the facet representations derived using the method of Example \ref{ex:two Sector}. However, when $(d_1,d_2)=(4,4)$, the lack of the facet representation of the polytope of TDMs in dimensions seven and up does not allow us to use this method of deduction, and hence the facet representation in Example \ref{ex:two Sector} for this case is particularly noteworthy. 
\end{remark}

\begin{remark}
When $d_1+d_2=3$ ({\it i.e.} $(d_1,d_2)\in\{(2,1),(1,2)\}$), the matrix \eqref{2secmat} is one of the following two forms:
\[
\begin{bmatrix}
1 & \alpha & \gamma \\
\alpha & 1 &\gamma \\
\gamma & \gamma & 1
\end{bmatrix}; \qquad
\begin{bmatrix}
1 & \gamma & \gamma \\
\gamma & 1 &\beta \\
\gamma & \beta & 1
\end{bmatrix}.
\]
By symmetry, the polytope of values of $(\alpha,\gamma)$ and $(\beta,\gamma)$ that generate TDMs are identical, say $\mathcal{P}_{1,2}$. The facets and vertices of $\mathcal{P}_{1,2}$ can be derived using the method described in Remark \ref{FreDeduc}, and are listed in Table \ref{2sector_rem}.
\end{remark}

\begin{table}[H] 
\caption{Description of the Polytope for $(d_1,d_2)=(1,2)$}
\label{2sector_rem}
\centering
\begin{tabular}{|c|c|} 
\hline
 Facets & Vertices\\
\hline
&\\
 $\beta, \gamma \geq 0$; $\beta \leq 1$; $\beta-2\gamma+1 \geq 0$ & $\{0\} \times \{0, 1/2\}$; $\{1\} \times \{0,1\}$ \\
&\\
\hline
\end{tabular}

\end{table}

\section{On Algorithms for Testing Membership of Arbitrary Matrices in $\calB_d$ and $\calT_d$}

In this section, we use some of the results derived above to lend insight into the practical side of membership testing. Currently, the most competitive algorithm for membership testing in $\calB_d$ and $\calT_d$ is the algorithm of \citet{Krause2018}, which is based on an LP formulation. In their algorithm, they choose the next incoming vertex for the reduced primal by solving a NP-hard Binary Quadratic Program (BQP) in every iteration. We show below that employing instead a particular continuous relaxation of this BQP, in essence replacing it by a polynomial time solvable problem, results in much improved running times. As expected from the above theoretical development of some parametric classes, when the decision problem restricted to a parametric class is polynomial time solvable, the latter approach is clearly superior, in practice as well, to a black-box algorithm. On the other hand, we demonstrate below using a certain experimental setup that even some randomly selected matrices in $\calB_d$ tend to have some inherent structure (see Table \ref{table:30p}), and identifying these can result in much shorter computational time - arguing against a black-box approach to membership testing. Finally, by employing a parametric class discussed above, we are able to answer even for large $d$ the following question:  do matrices close to the boundary of $\calB_d$ always make for difficult problem instances?  

We note that all of the computation of this section except Table \ref{table_cts} was done on a machine with a single Intel\textsuperscript{\textregistered} Quad Core\texttrademark{}  i7 2.7GHz processor with 16GB of RAM; for Table \ref{table_cts} we used one with a single Intel\textsuperscript{\textregistered} Quad Core\texttrademark{} i7 3.4GHz processor with 32GB of RAM. These computations used \texttt{IBM}\textsuperscript{\textregistered}  \texttt{ILOG CPLEX} LP solver on MATLAB\textsuperscript{\textregistered} Version R2018 b. Importantly, performance of the algorithm in \citet{Krause2018}, which we denote by KSSW, is reported by using their implementation which they very generously made available to us. Finally, we define two classes of instances from the test battery of \citet{Krause2018} that we use below for reasons of both objectivity and comparability. Their Problem Class 3 refers to matrices simulated from $\calB_d$ by generating a number, say $N$, uniformly between $d^2$ and $d^4$, randomly choosing $N$ vertices of $\calB_d$ from among the $2^d$, and then finding a random convex combination of these chosen vertices by choosing the weights using an exchangeable Dirichlet distribution with uniform marginals. Their Problem Class 5 refers to simulated matrices of the form $\mathbf{A}+\mathbf{B}/d$, where $\mathbf{A}$ is a matrix generated using Problem Class $3$, and $\mathbf{B}$ is the first vertex chosen to construct $A$ (or equivalently one chosen uniformly from among these vertices).

In Remark 3 of \citet{Krause2018}, it is observed that there are alternate LP formulations of the realization problem based on the choice of the objective function. The algorithm KSSW is based on a formulation that computes the minimum distance of a given matrix from $\calB_d$ in terms of the matrix max-norm, with zero distance being equivalent to membership in $\calB_d$. An alternate formulation is that of our Theorem \ref{FABC}. Importantly, they avoid/delay confronting the exponentiality of the number of variables in their algorithm by employing the column generation method, see pg. 249 of \citet{Krause2018}. This method starts with testing membership in the convex hull of a carefully selected set of $O(d^2)$ vertices, and sequentially adds vertices based on the {\em most violated constraint} of the dual. The use of the column generation method more than doubles the limits on $d$ from about $20$ to above $40$. 

A natural question then is whether alternate LP formulations can equally benefit from such optimization techniques. So we compared the formulation of Theorem \ref{FABC} along with suitably modified enhancements employed in \citet{Krause2018}, with the KSSW algorithm. For matrices in $\calB_d$, {\it i.e.} the positive instances, we found that the two formulations had comparable performance across dimensions, with no formulation having uniformly superior running times. In the case of test matrices outside $\calB_d$, since the formulation of Theorem \ref{FABC} has a trivial objective function, it is unable to use dual bounds (see Proposition 8 of \citet{Krause2018} and bound (5) of \citet{Lubbecke2011}\footnote{We note that this latter bound was employed by \citet{Krause2018} in their implementation of KSSW even though it is not mentioned in the published article.}) towards an early termination with a negative decision. This gives a decisive advantage to formulations incorporating non-trivial objective functions such as KSSW. To clearly bring out this advantage, we do rejection sampling to select $100$ negative instances from Problem Class $5$ on which the KSSW algorithm exits because of the use of dual bounds. Since the dual approximation bound of Proposition $5$ of \citet{Krause2018} is computed by KSSW before executing the column generation method, and since this is very successful on negative instances from Problem Class 5, for our current purpose we {\it switched off} this pre-check.  We chose lower dimensions because for higher $d$ such instances tend to be overwhelmingly in $\calB_d$. In Table \ref{dual-bound}, we report the average running times of the KSSW implementation and an implementation of the LP formulation underlying Theorem \ref{FABC} which employs the column generation method. The significantly lower average number of iterations for KSSW demonstrates the benefits of using dual bounds.
\begin{table}[ht]    
\caption{Comparison of LP With \& Without a Non-Trivial Objective: Negative Cases}      
\label{dual-bound}
\centering                             
\begin{tabular}{|c|c|c|c|c|}    
\hline                                     
\multicolumn{1}{|c|}{\multirow{2}{*}{$d$}} & \multicolumn{2}{c|}{Avg. Running Time (in s)} &\multicolumn{2}{c|}{Avg. No. of Iterations}\\ \cline{2-5}
\multicolumn{1}{ |c|  }{} & Theorem \ref{FABC}  & KSSW & Theorem \ref{FABC}  & KSSW \\
\hline
16  &17.6 &1.18 &119 & 1.53      \\
18 &84.8&6.50&199&2.39       \\
20  & 233 &26.1 & 200 &7.22       \\
\hline                      
\end{tabular}                              

\end{table}


One important part of the column generation method is the introduction of new variables at each iteration, and this is done by finding the most violated dual constraint. In the case of the algorithm of \citet{Krause2018}, they notice that this sub-problem can be formulated as a BQP of the form,
\begin{equation}\label{eq:BQP}
\max_{\mathbf{p} \in \{0,1\}^d}  \mathbf{p}^\mathsf{T} \mathbf{G}  \mathbf{p},
\end{equation}
where $\mathbf{G}$ is some arbitrary symmetric matrix. Even though the BQP is a NP-hard problem (see \citet{padberg1989}), 
\citet{Krause2018}  observe that on CPLEX the above formulation significantly reduces the compute time compared to a full evaluation of all of the $2^d$ constraints. 
Driven by the fact that it is a heuristic prescription to choose the new vertex by using the most violated constraint, and that the BQP is NP-hard, we explored a continuous relaxation of the BQP. But a straightforward relaxation of the form   
\[
\max_{\mathbf{p} \in [0,1]^d}  \mathbf{p}^\mathsf{T} \mathbf{G}  \mathbf{p},
\]
which is a Quadratic Programming (QP) problem, moves us from one NP-hard problem to another if the $\mathbf{G}$ is not negative semidefinite, see Theorem 2.5.4 of \citet{Sahni1974}, and also \citet{Pardalos1991}. Of course, if $\mathbf{G}$ is negative semidefinite then the original BQP in \eqref{eq:BQP} is trivial. Towards this end we note that the original BQP is easily seen to be equivalent to
\[
 \max_{\mathbf{p} \in \{0,1\}^d}   \mathbf{p}^\mathsf{T} (\mathbf{G}- \text{diag}(\mathbf{f}))  \mathbf{p} + \mathbf{f}^\mathsf{T}   \mathbf{p}.
\]
Now choosing $\mathbf{f} = (\max_{i} \lambda_i+\varepsilon) \mathbf{1}_{d \times 1}$, where $\lambda_i$, for $i=1,\ldots,d$, are the eigenvalues of $\mathbf{G}$ and $\varepsilon$ is a positive real (chosen to be small, for example $10^{-8}$), makes $\mathbf{G}- \text{diag}(\mathbf{f})$ a negative definite matrix\footnote{In the implementation of KSSW, \citet{Krause2018} use a similar positive definite version of their BQP. }. This leads to our continuous relaxation 
\[
 \max_{\mathbf{p} \in [0,1]^d}   \mathbf{p}^\mathsf{T} (\mathbf{G}- \text{diag}(\mathbf{f}))  \mathbf{p} + \mathbf{f}^\mathsf{T}   \mathbf{p},
\]
which is a QP problem with a negative semidefinite matrix, and such QPs can be solved in polynomial time, see \citet{Kozlov1979}, \citet{kapoor1986fast} and \citet{ye1989extension}. The choice of $\mathbf{f}$ is guided by Theorem 2 of \citet{Hammer1970}, as we round the solution to get an {\it approximate} solution to the original BQP of \eqref{eq:BQP}. Now by using this latter approximate solution to the BQP, instead of the actual solution, we fall short of identifying the most violated constraint of the BQP. This has a few implications. First, if such an identified condition is not a violation then one should solve the BQP; note that such conditions could be already part of the reduced dual. Second, the dual bounds such as Proposition 8 of \citet{Krause2018} and the bound in (5) of \citet{Lubbecke2011} could no longer be valid. Since, as demonstrated above in Table \ref{dual-bound}, such bounds have found to be quite useful in some negative instances, we recommend that every so many iterations one should solve the BQP. 

To investigate the empirical performance of the above alternate solution towards identifying the next incoming vertex, we use the Problem Class 3 of positive instances as described above. We randomly generated $100$ such instances for dimensions $25,30,35$, and $40$, but only $20$ for dimension $45$ as even for this small sample size the total compute time for dimension $45$ was above $18$ hours\footnote{For dimension $45$, we increase the default limit on the running time in KSSW from $30$ minutes to $90$ minutes.}. These instances served as inputs to the KSSW algorithm as well as its above suggested modification which we denote by KSSWcr. In Table \ref{table_cts} and Figure \ref{figure_cts} below, we report the average running time, average number of iterations, and the average time spent in the sub-problem of identifying the next incoming vertex. Note that while it is expected for the number of iterations used by KSSWcr to be higher than that of KSSW, as we are no longer guaranteed to identify the most violated constraint, this phenomenon is seen only for lower dimensions.  
This is so as quite unexpectedly we see that the growth in the number of iterations used by KSSWcr is quite muted compared to that by KSSW. Moreover, as expected we see a significant reduction in the time spent on the sub-problem which results in a close to four fold decrease in the running time at dimension $45$. In other words, in higher dimensions the solution of the BQP becomes the bottleneck (and not the solution of the LPs in the column generation process), rendering  beneficial the speeding up of the computation of the most violating constraint. 


\begin{table}[htbp]
\caption{Comparison of the use of BQP versus its Continuous Relaxation}
\label{table_cts}
\centering
\begin{tabular}{|c|c|c|c|c|c|c|}
\hline
\multicolumn{1}{|c|}{\multirow{2}{*}{$d$}} & \multicolumn{2}{c|}{Avg. Running Time (in s)} & \multicolumn{2}{c|}{Avg. No. of Iterations}& \multicolumn{2}{c|}{Time Spent in BQP/QP (in s)} \\ \cline{2-7}
\multicolumn{1}{ |c|  }{} & KSSW & KSSWcr & KSSW & KSSWcr& KSSW & KSSWcr\\ \hline
 25 &10.06&6.096 &55.41&71.49&4.693&0.438 \\
30&53.03&32.96&207.8&271.8&24.38&1.86\\
35&  199.3&103.6&  369.7&  431.4&  102.9&    5.08\\
40&  764.7&280.6&586.9& 584.0&497.9&6.21\\
45&  2613&669.9 &   851.7    &758.5&    1937&    13.2\\
\hline
\end{tabular}

\end{table} 

\begin{figure}[ht]
\centering
	\includegraphics[width=11.5 cm]{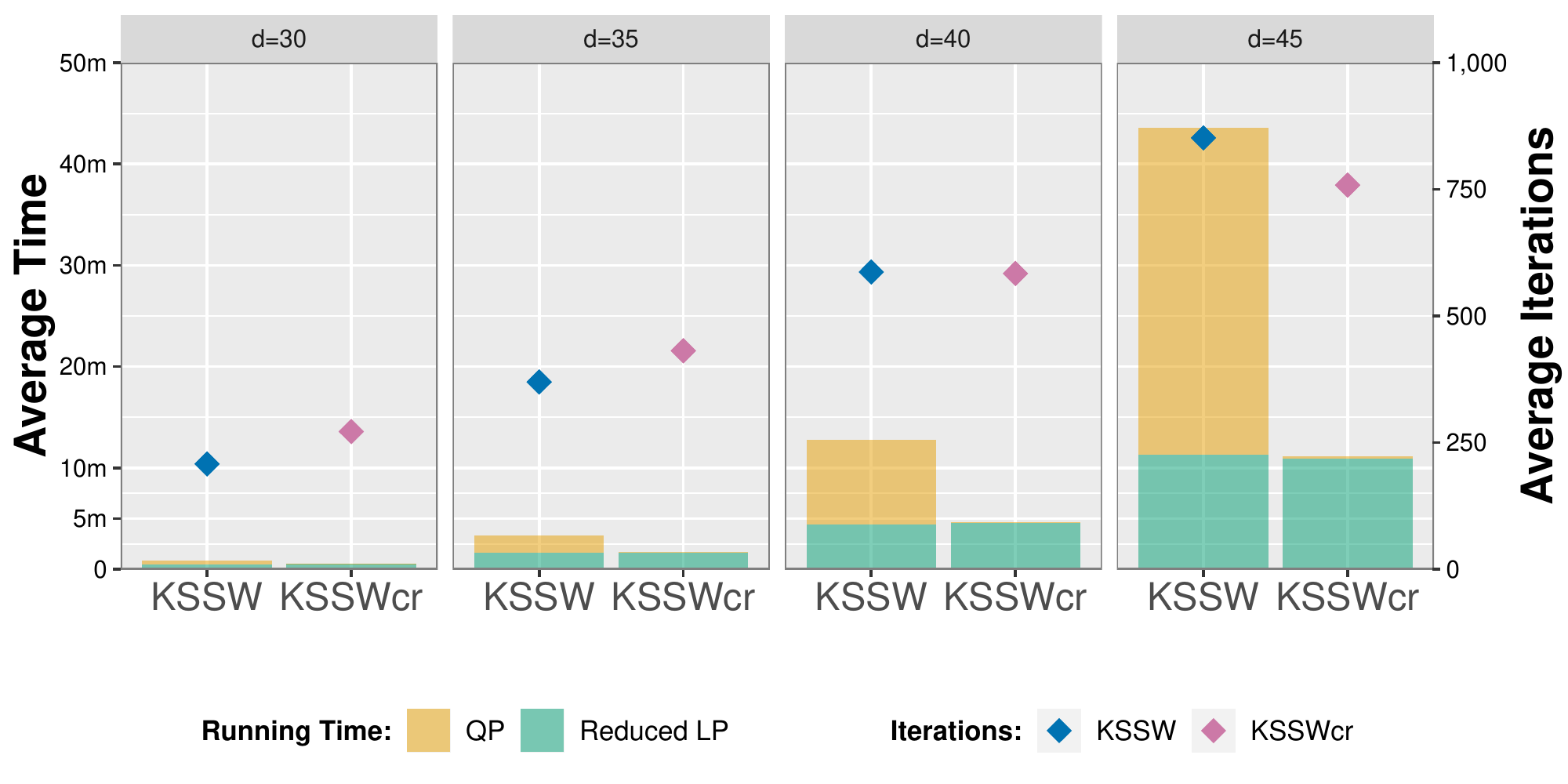}
	\caption{Graphical Representation of Table \ref{table_cts} - BQP versus its Continuous Relaxation} 
	\label{figure_cts} 
\end{figure} 

It is worth mention that there are two approaches to membership testing. One that is represented by using a general LP formulation, either that based on Theorem \ref{FABC} or the KSSW algorithm, or any other such version. The idea being to design a general algorithm that could cater to all inputs without further human intervention, in other words a black-box approach to membership testing. The other approach being to make use of structures inherent in the modeling, and to design a customized algorithm. The obvious advantage of a black-box approach is that it does not require much time of the modeler. On the other hand customized solutions can have huge performance advantage in the presence of useful inherent structures. Consider for example the highly symmetric $3$-parameter class presented in Example \ref{ex:two Sector}. In this case, the customized LP formulation of \eqref{two:sector:lp} is $O(d^2)$ in size versus the exponential description of the general LP formulation. Hence, in particular, the customized solution for the parametrized form of Example \ref{ex:two Sector} implies that the membership testing problem in this restricted setting is in P. In Table \ref{table_2sector}, we report average run times of $100$ instances per dimension per decision (positive or negative). These instances were generated by assigning $d_1=\lceil d/2 \rceil$ and $d_2=\lfloor d/2 \rfloor$, uniformly sampling $(\alpha,\beta,\gamma)$ from $[0,1]^3$ and selecting the first $100$ positive and negative instances for each dimension. We note that the KSSW algorithm is set to time out at the 30 minute mark per instance, as reported in \citet{Krause2018}. Hence, if any instance for a given dimension and decision timed out, then we reported both the percentage of cases it timed out on as well as the average running times among those that terminated. Note that while the KSSW algorithm times out for the negative (resp., positive) cases starting with dimension $20$ (resp., $30$), the customized algorithm of Example \ref{ex:two Sector} can effortlessly handle even dimension $1,000$.   

\begin{table}[htbp]
\caption{Running Time on Instances of Example \ref{ex:two Sector}}
\label{table_2sector}
\centering
\begin{threeparttable}
\begin{tabular}{|c|l|l|l|l|c|c|}
\hline
\multicolumn{1}{|c|}{\multirow{3}{*}{$d$}} &\multicolumn{4}{ c|  }{ Avg. Running Time (in s) }\\ \cline{2-5} 
\multicolumn{1}{ |c|  }{} & \multicolumn{2}{c|}{Algo. of Ex. \ref{ex:two Sector}} & \multicolumn{2}{c|}{KSSW (Proportion of Timeouts)} \\ \cline{2-5}
\multicolumn{1}{ |c|  }{} & Positive & Negative & Positive & Negative \\ \hline
5&  0.00054 &0.00053    &0.02477& 0.01695 \\
10 & 0.00057&0.00066   &0.03108&  0.01742 \\
15  & 0.00064 &   0.00068    &0.43523&  0.20645  \\
20  &   0.00071&   0.00077 & 0.74621& 19.538\tnote{$\blacktriangle$} \; (3\%)\\
25  &  0.00091&0.00094&3.4348&88.408\tnote{$\blacktriangle$} \; (33\%)\\
30  & 0.00091&0.00107&15.339\tnote{$\blacktriangle$} \; (1\%)&-\\
35  & 0.00127&0.00144&51.291\tnote{$\blacktriangle$} \; (1\%)&-\\
40  & 0.00156&0.00167& 94.193\tnote{$\blacktriangle$} \; (5\%)&-\\
50 &0.00228&0.00227&-&-\\
100 &0.00658&0.00662&-&-\\
1000 &3.94& 4.20&-&- \\
\hline 
\end{tabular}
\begin{tablenotes}
\item [$\blacktriangle$] The average running time excluding timeouts 
\end{tablenotes}
\end{threeparttable}

\end{table}

\begin{table}[ht]  
\caption{Running Time for $\calB_{30}$ Instances Near the {\em Boundary} - Example \ref{ex:two Sector}}        
\label{3para_positive}
\centering     
\begin{threeparttable}
\begin{tabular}{|r|c|c|}                      
\hline                                                                       
\multicolumn{1}{|c|}{\multirow{2}{*}{Distances to $\gamma_u$}}&\multicolumn{2}{c|}{ Avg. Running Time (in s) } \\ \cline{2-3}
\multicolumn{1}{ |c|  }{}&  Algo. of Ex. \ref{ex:equi-correlation} & KSSW (Proportion of Timeouts) \\ \cline{2-3} 
\hline
    50\% $\gamma_u$ &  0.0009 &  7.16  \\
    25\% $\gamma_u$ &  0.0009 &   20.6  \\
    12.5\% $\gamma_u$ &  0.0009  & 29.2 \\
    6.25\% $\gamma_u$ &  0.0009   & 68.7\tnote{$\blacktriangle$} \; (1\%) \\
    3.13\% $\gamma_u$ &  0.0009   & 120\tnote{$\blacktriangle$} \; (9\%)\\
\hline                
\end{tabular}       
\begin{tablenotes}
\item [$\blacktriangle$] The average running time excluding timeouts 
\end{tablenotes}
\end{threeparttable}

\end{table}

Another phenomenon we observed is that algorithms designed for membership testing of general matrices are non-robust in the sense that the running times tend to, but not always, increase as these matrices approach the boundary of $\calB_d$. We first demonstrate this phenomenon in the case of matrices of the form considered in Example \ref{ex:two Sector}. Since the negative cases start to time out at dimension $20$, we restrict ourselves to positive cases near the boundary with the dimension set at $30$, and as a proxy for distance to boundary we used $$\frac{ \gamma_u(\alpha,\beta)-\gamma}{\gamma_u(\alpha,\beta)}.$$ We generated our test cases by setting $d_1=d_2=d/2=15$ and uniformly generating $100$ values of $(\alpha,\beta)$ from $[0,1]^2$; for each $(\alpha,\beta)$ so generated, we used the LP formulation of \eqref{two:sector:lp} to create five test cases $(\alpha,\beta,(1-2^{-i})\gamma_u(\alpha,\beta))$, for $i=1,\ldots,5$.  For each $i=1,\ldots,5$, we report the average running time of the KSSW algorithm among the $100$ cases in Table \ref{3para_positive}. The KSSW algorithm starts to time out at the 30 min. mark on some test cases beginning with $i=4$, and so we reported only the average running time among those that did not time out to show the non-robustness in this metric as well. 

To demonstrate the above phenomenon graphically, we chose the instances from Example \ref{ex:equi-correlation}. We uniformly generate $1,000$ points from the polytope of the equi-correlation matrices of Example \ref{ex:equi-correlation} for $d=30$ using the function {\tt cprnd} (see \citet{cprnd}). In Figure \ref{1000}, we report the running times of KSSW by using a heat map, in which we observe that proximity to the boundary does not always lead to increased running times. In particular, we see that near the part of the boundary which represents comonotonic dependence, {\it i.e.} $\alpha=\beta$, running times are on the lower side in contrary to expectation. We note that the mean time of the $1,000$ instances equals $41.95$, with standard deviation of $46.19$, minimum of  $0.3608$ and maximum of $416.6$. 

\begin{figure}[ht]
\centering
	\includegraphics[width=12 cm]{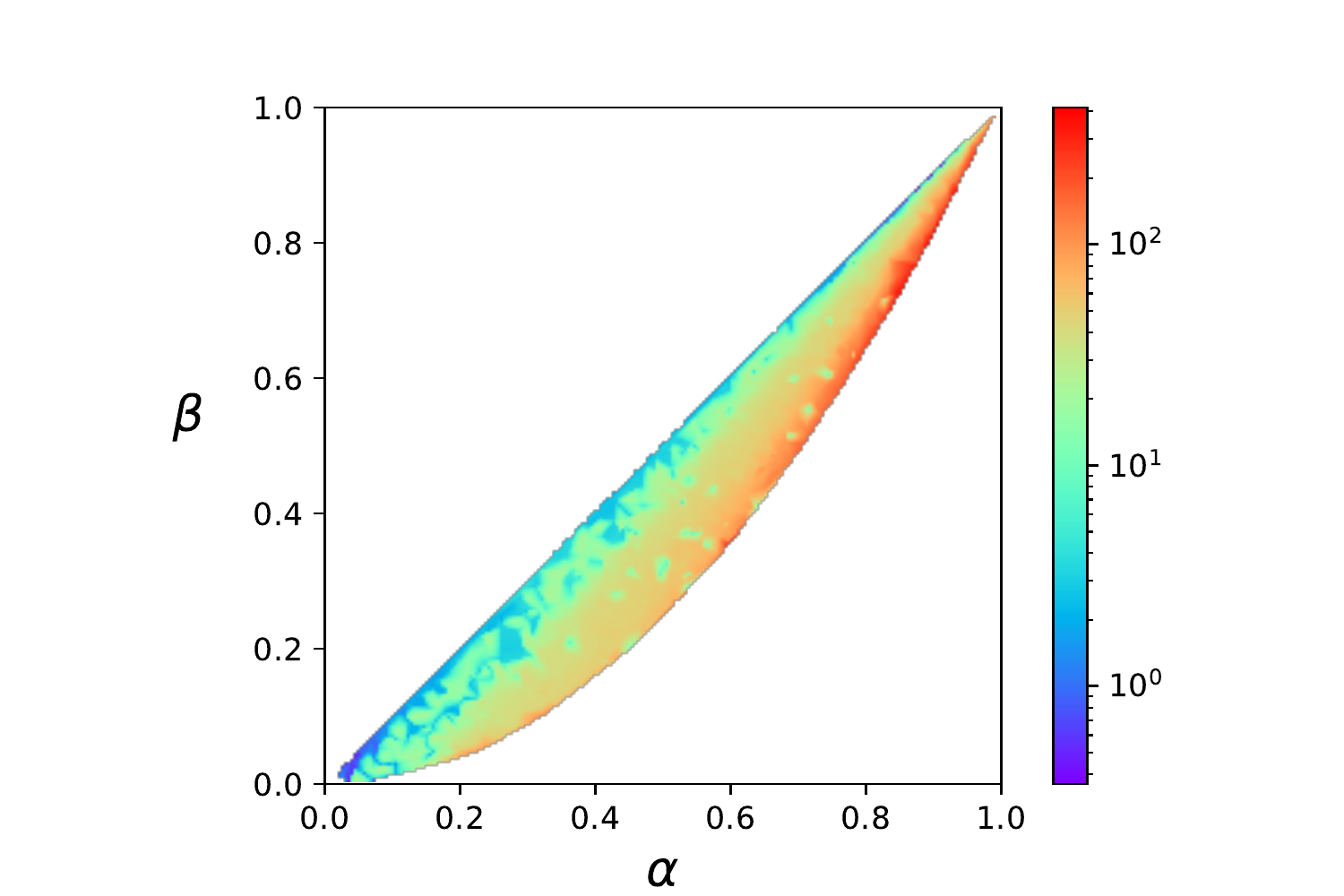}
	\caption{Running Time for $\calB_{30}$ Instances from Example \ref{ex:equi-correlation}} 
	\label{1000}
\end{figure}

Finally, we wish to point out that even arbitrary matrices may have structures that could be useful to reduce running times. To demonstrate this, we randomly generated five, dimension $30$ matrices, from Class $3$ of \citet{Krause2018} using their simulation methodology described above. For each such matrix, we randomly permuted the rows and columns $100$ times, and recorded the running times of KSSW on each of these permutations. In Table \ref{table:30p}, we show some summary statistics of these running times for each of the five instances. The minimum time reflects the success of the choice of the initial set of vertices as described in \citet{Krause2018} for some permutations but not for most. Seemingly, the problem of identifying such a permutation is of exponential complexity. But in practice, this is likely to arise because of choices made by the modeler; this lends further support to a more customized, modeler guided approach to the realization problem.

\begin{table}[ht]                                                   \caption{Running Times for Random Permutations of Instances with $d=30$}         
       
\label{table:30p}              
\centering                                                                    
\begin{tabular}{|c|c|c|c|c|c|}                                                  
\hline                                                                    
\multicolumn{1}{|c|}{\multirow{2}{*}{Cases}} &\multicolumn{4}{ c|  }{Running Time (in s)} & \multicolumn{1}{c|}{\multirow{2}{*}{\# Less than $10$s }} \\ \cline{2-5}
\multicolumn{1}{ |c|  }{} &  Mean & Min. & Max. & Standard Deviation &\multicolumn{1}{ c|  }{}\\
\hline
    1 &  62.6  & 7.19& 89.4& 12.9&4\\
    2 &  67.5  &  7.46& 87.2& 9.49&1\\
    3  &  67.9 &  7.47& 94.0 &16.2&5\\
    4  &  70.0  & 7.77& 94.4 &14.8&4\\
    5 &  71.2 & 7.75& 99.8& 10.2&1\\
\hline                                                                         
\end{tabular}                                                   
\end{table}

\begin{figure}[ht]
\centering
	\includegraphics[width=10 cm]{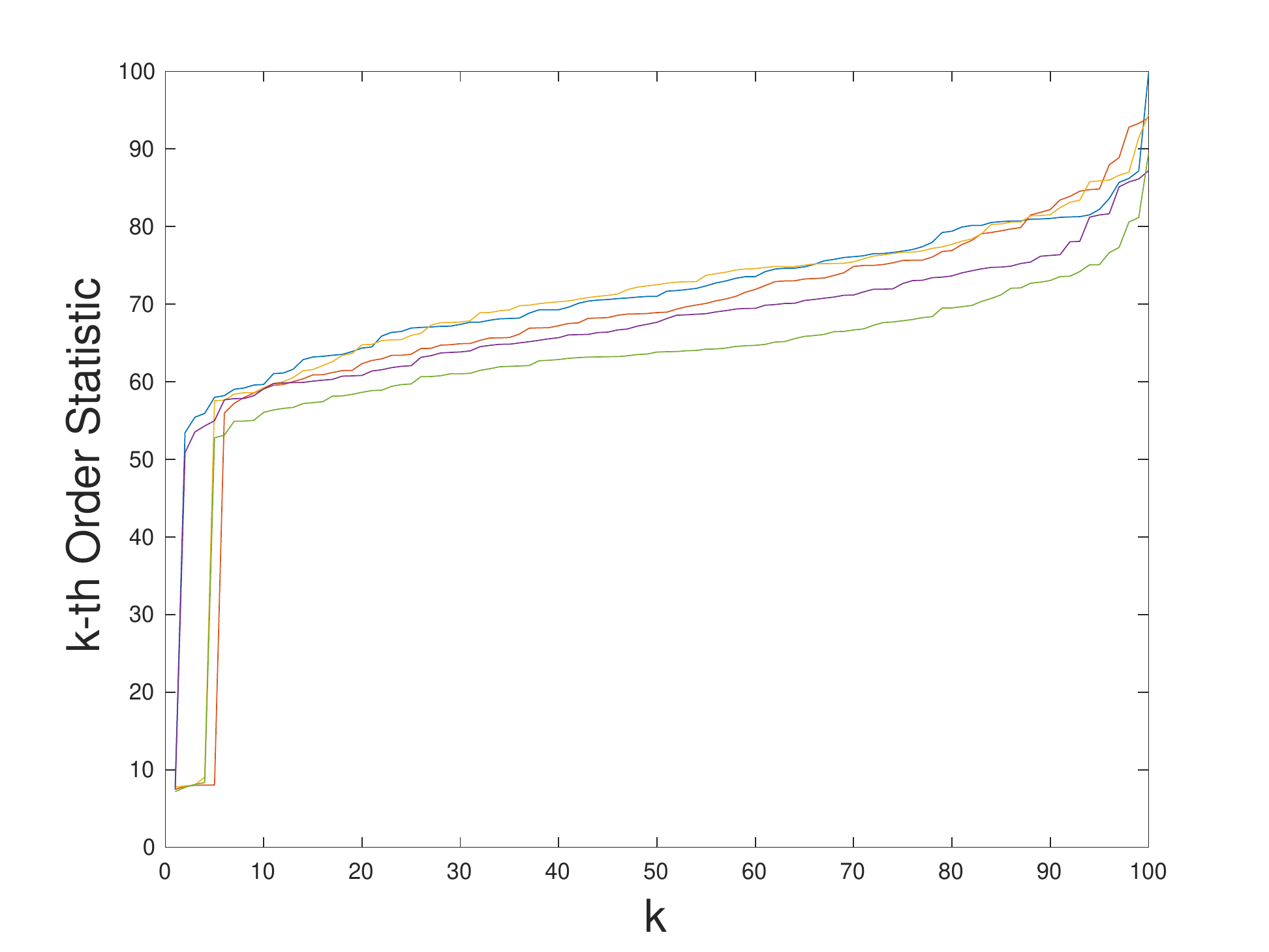}
	\caption{Running Times for Random Permutations of Instances with $d=30$} 
	\label{30p}
\end{figure}

\section{Conclusion and Discussion}
In this study, the focus was on developing a methodology for the TDM realization problem restricted to parametric classes. A refinement of a natural LP formulation, and its solution by the ellipsoid method establishes a connection with the NP-complete max-cut problem on a real weighted undirected graph. In a sense this can also be seen as an alternate method to establish that the cut polytope is related to the realization problem. As mentioned in section 4, this suggests that a fruitful direction by which to identify classes of TDMs that will admit a polynomial time algorithm for the realization problem is to identify those for which a polynomial time separation oracle can be constructed. Facing the unlikely prospect that a polynomial time algorithm can be found for the general realization problem, we turned our focus on techniques by which the realization problem can be solved for lower dimensional parametric classes of TDMs. We showed through some classes of TDMs that sparsity and symmetry can be exploited to reduce the complexity of the LP formulation to polynomial time.  

In the case of equi-correlation BCM matrices (Example \ref{ex:equi-correlation}), the limiting polytope has an area of $1/6$, and the area of the polytope for any finite $d$ is given by $1/6+(6d)^{-1}$. So the limiting polytope is in a sense a $O(d^{-1}$) approximation. In cases where the finite $d$ polytope are computationally hard to describe, it would be interesting to see if we can still find approximations which are provably good. In this connection, it is interesting to note that in the two-dependence class of Proposition \ref{MA2}, the polytopes coincide for $d\geq 6$. We conjecture that the latter stabilization of the constraint polytopes holds for all $m$-dependence TDMs of Proposition \ref{prop:m-dependence}. 

A noteworthy aspect of Proposition \ref{Toep}, which provides sufficient conditions for the constraint polytope for Toeplitz TDM matrices, is that while the conditions are not necessary they are satisfied in all of the TDM parametric classes we considered. This suggests that practically useful sufficient conditions for parametric classes can be constructed using the methods of this paper, even if necessary and sufficient conditions are out of reach. In view of the previous discussion, it will be interesting to derive bounds on the ratio of the volume of the polytope described by the sufficient conditions of Proposition \ref{Toep} to the volume of the exact polytope. Also, it would be interesting to explore if the Toeplitz class of TDMs would admit a polynomial time algorithm. Finally, the constraint polytopes of the parametric classes considered in this paper exhibit various properties with some of them emphasized here; we believe it would be both theoretically interesting and practically relevant to explore the generality in which these properties hold true.  


Part of this work was done independent of \citet{Fiebig2017}, and also \citet{Strokorb2013}. Moreover, after presenting the results of this paper in the {\it 21st International Congress on Insurance: Mathematics and Economics - IME 2017}, \citet{Krause2015} was brought to our attention; also, during the refereeing process we became aware of a more extensive published version \citet{Krause2018}. While this article, \citet{Fiebig2017} and \citet{Krause2018} deal with the realization problem, and there is some overlap, the main focus of these articles are distinct as explained below. 

In \citet{Fiebig2017} a study of the tail correlation function (TCF) realization problem is conducted, with TCF being the analogue of TDMs for stochastic processes. They show that a function is a valid TCF \iff its restriction to finite sets is a valid TCF, {\it i.e.} they reduce the TCF realization problem to that of the TDM.  Independent of \citet{Embrechts2016}, they establish the connection between $\calT_d$ and BCMs, and derive many results on the geometric structure of $\calT_d$. They observe a fundamental connection between $\calT_d$ and the correlation and cut polytopes studied in the area of computational geometry (see \citet{Deza1997}); this allows them to derive an explicit facet representation of $\calT_d$, for $d\leq 6$. \citet{Krause2018} is akin to us motivated by the search for an algorithm to determine membership in $\calT_d$, which they as well reduce to determining membership in $\calB_d$. They like \citet{Lee1993} consider an LP formulation of this problem by using the vertex representation of $\calB_d$, but with a non-trivial objective function. Since their LP formulation is exponential in description, and moreover since \citet{kaibel2015short} shows that so will any other LP formulation, their focus is mainly on techniques that speed up their LP formulation in practice.

\vspace{1 cm}
\textbf{Acknowledgements.}The authors thank Paul Embrechts for bringing our attention to \citet{Embrechts2016}, which led to our interest in the realization problem. We thank an associate editor for bringing our attention to \citet{Krause2017}, and an area editor and two referees for many thoughtful suggestions. We thank Jonas Schwinn and Ralf Werner for graciously sharing their  well executed implementation of the KSSW algorithm without which parts of this work would not have been possible. Also, we would like to thank Sam Burer and Ruodu Wang for fruitful discussions. The first author would like to acknowledge with gratitude the support from  a Society of Actuaries’ Center of Actuarial Excellence Research Grant.

\bibliographystyle{spmpsci}      


\appendix{}

\section{Appendix}

\begin{figure}[H]
  \centering
 \hspace{-1.6cm}
   \begin{minipage}[t]{0.3\textwidth}
   \vspace{-0.5 cm}
   {\large
  \[
  \mathbf{B}:=\begingroup
\renewcommand*{\arraystretch}{1.3}
  \begin{bmatrix}
  \frac{1}{2} & \frac{1}{6} &\frac{1}{6} &\frac{1}{6} &\frac{2}{9} &\frac{2}{9}  \\
  \frac{1}{6} & \frac{1}{2} &\frac{1}{6} &\frac{1}{6} &\frac{2}{9} &\frac{2}{9}  \\
  \frac{1}{6} & \frac{1}{6} &\frac{1}{3} &\frac{1}{6} &\frac{2}{9} &\frac{2}{9}  \\
  \frac{1}{6} & \frac{1}{6} &\frac{1}{6} &\frac{1}{3} &\frac{2}{9} &\frac{2}{9}  \\
  \frac{2}{9} & \frac{2}{9} &\frac{2}{9} &\frac{2}{9} &\frac{1}{3} &\frac{1}{8}  \\
  \frac{2}{9} & \frac{2}{9} &\frac{2}{9} &\frac{2}{9} &\frac{1}{8} &\frac{1}{3}  \\
  \end{bmatrix}
  \endgroup
  \]}
  \end{minipage}
  \qquad 
  \begin{minipage}[t]{0.6\textwidth}
  {\small
  \begin{verbatim}
G = Graph(sparse=True,loops=True)
# Adding edges corresponding to off-diagonal elements of B
G.add_edges([(1,2,'a'),(1,3,'a'),(1,4,'a'),
(2,3,'a'),(2,4,'a'),(3,4,'a'),(5,6,'b')] )
# Adding self-loops corresponding to the diagonal elements of B
G.add_edges([(1,1,'c'),(2,2,'c'),
(3,3,'d'),(4,4,'d'),(5,5,'d'),(6,6,'d')] )
# Computing the generators of the automorphism group
H = G.automorphism_group(edge_labels=True) 
# Computing N(B) using Polya's Enumeration Theorem
H.cycle_index().expand(2)(1,1)
  \end{verbatim}}
  \end{minipage}
 \caption{A Matrix and its Associated SageMath Code to Compute $N(\mathbf{B})$}
 \label{fig:sagemath}
\end{figure}

\begin{figure}[H]
  \centering
  \begin{minipage}[b]{0.4\textwidth}
    \begin{tikzpicture}[tdplot_main_coords, scale=2, >=stealth]   
    \draw[fill=gray,opacity=0.5] (0,0,0) -- (1,0,0) -- (1,0,.5) -- (0,0,.5) -- cycle;
    \draw[fill=gray,opacity=0.5] (0,0,0) -- (0,0,.5) -- (0,1,.5) -- (0,1,0) -- cycle;
    \draw[fill=gray,opacity=0.5] (0,0,.5) -- (1,0,.5) -- (1,1,1) -- cycle;
    \draw[fill=gray,opacity=0.5] (0,0,.5) -- (1,1,1) -- (0,1,.5) -- cycle;   
    \draw[dashed] (1,0,0) -- (1,1,0);
    \draw[dashed] (0,1,0) -- (1,1,0);
    \draw[dashed] (1,1,1) -- (1,1,0);
    \draw[black] (0,0,0) node[below] {$(0,0,0)$};
    \draw[black] (1,1,1) node[above] {$(1,1,1)$};
    \draw[black] (1,0,0) node[left] {$(0,1,0)$};
    \draw[black] (0,1,0) node[right] {$(1,0,0)$};
  \end{tikzpicture} 
    \caption{The Constraint Polytope when $(d_1,d_2)=(2,2)$}
    \label{matlab_2_2}
  \end{minipage}
  \quad
  \begin{minipage}[b]{0.4\textwidth}
  {\bf Polytope in Parameter Space:}
  \begin{description}
  \item [Facets:]
  \begin{align*}
    \alpha, \beta,\gamma &\geq 0;\quad &\alpha,\beta \leq 1; \\
  \alpha-2\gamma+1 &\geq 0;\quad  &\beta-2\gamma+1 \geq 0.
  \end{align*}
  \item [Vertices: $(\alpha,\beta, \gamma)$]
    \begin{align*}
    \{0,1\}^2\times\{0\};& & (1,1,1);& &(0,0,1/2);& \\
    (0,1,1/2);& &(1,0,1/2).& & & & & \\
  \end{align*}
  \end{description}
  \vfill
  \end{minipage}
\end{figure}

    \begin{figure}[H]
  \centering
  \begin{minipage}[b]{0.4\textwidth}
  \begin{tikzpicture}[tdplot_main_coords, scale=2, >=stealth]   
    \draw[fill=gray,opacity=0.5] (0,0,0) -- (1,0,0) -- (1,0,1/3) -- (0,0,1/3) -- cycle;
    \draw[fill=gray,opacity=0.5] (0,0,0) -- (0,0,1/3) -- (0,1,1/3) -- (0,1,0) -- cycle;
    \draw[fill=gray,opacity=0.5] (0,0,1/3) -- (1,0,1/3) -- (1,1/3,2/3) -- cycle;
    \draw[fill=gray,opacity=0.5] (0,0,1/3) -- (1/3,1,2/3) -- (0,1,1/3) -- cycle;
    \draw[fill=gray,opacity=0.5] (0,0,1/3) -- (1,1,1) -- (1,1/3,2/3) -- cycle; 
     \draw[fill=gray,opacity=0.5] (0,0,1/3) -- (1/3,1,2/3) -- (1,1,1) -- cycle;   
    \draw[dashed] (1,0,0) -- (1,1,0);
    \draw[dashed] (0,1,0) -- (1,1,0);
    \draw[dashed] (1,1,1) -- (1,1,0);
        \draw[black] (0,0,0) node[below] {$(0,0,0)$};
    \draw[black] (1,1,1) node[above] {$(1,1,1)$};
    \draw[black] (1,0,0) node[left] {$(0,1,0)$};
    \draw[black] (0,1,0) node[right] {$(1,0,0)$};
  \end{tikzpicture} 
    \caption{The Constraint Polytope when $(d_1,d_2)=(3,3)$}
    \label{matlab_3_3}
  \end{minipage}
  \quad
  \begin{minipage}[b]{0.4\textwidth}
  {\bf Polytope in Parameter Space:}
  \begin{description}
  \item [Facets:]
  \begin{align*}
    \alpha, \beta,\gamma \geq 0;& &\alpha,\beta \leq 1;& \\
  3\alpha-3\gamma+1 \geq 0; & &3\beta-3\gamma+1 \geq 0;& \\
  3\alpha+\beta-6\gamma+2 \geq 0; & & \alpha+3\beta-6\gamma+2 \geq 0.&
  \end{align*}
  \item [Vertices: $(\alpha,\beta, \gamma)$]
    \begin{align*}
    \{0,1\}^2\times\{0\};& & (1,1,1);& &(0,0,1/3);&  \\
    (0,1,1/3);& & (1,0,1/3);& & (1/3,1,2/3);& \\
    (1,1/3,2/3).& & & & & \\
  \end{align*}
  \end{description}
  \vfill
  \end{minipage}
\end{figure}

    \begin{figure}[H]
  \centering
  \begin{minipage}[b]{0.4\textwidth}
      \begin{tikzpicture}[tdplot_main_coords, scale=2, >=stealth]   
    \draw[fill=gray,opacity=0.5] (0,0,0) -- (0,1,0) -- (0,1,1/4) -- (0,0,1/4) -- cycle;
    \draw[fill=gray,opacity=0.5] (0,0,0) -- (0,0,1/4) -- (1/3,0,1/2) -- (1,0,1/2) -- (1,0,0) -- cycle;
    \draw[fill=gray,opacity=0.5] (0,0,1/4) -- (0,1,1/4) -- (1/6,1,1/2) -- cycle;
    \draw[fill=gray,opacity=0.5] (0,0,1/4) -- (1/6,1,1/2) -- (1/2,1,3/4) -- (1/3,0,1/2) -- cycle;
    \draw[fill=gray,opacity=0.5] (1,1,1) -- (1/3,0,1/2) -- (1/2,1,3/4) -- cycle; 
     \draw[fill=gray,opacity=0.5] (1,1,1) -- (1/3,0,1/2) -- (1,0,1/2) -- cycle; 
    \draw[dashed] (1,0,0) -- (1,1,0);
    \draw[dashed] (0,1,0) -- (1,1,0);
    \draw[dashed] (1,1,1) -- (1,1,0);
        \draw[black] (0,0,0) node[below] {$(0,0,0)$};
    \draw[black] (1,1,1) node[above] {$(1,1,1)$};
    \draw[black] (1,0,0) node[left] {$(0,1,0)$};
    \draw[black] (0,1,0) node[right] {$(1,0,0)$};
  \end{tikzpicture} 
    \caption{The Constraint Polytope when $(d_1,d_2)=(2,4)$}
    \label{matlab_2_4}
  \end{minipage}
  \quad
  \begin{minipage}[b]{0.4\textwidth}
  {\bf Polytope in Parameter Space:}
  \begin{description}
  \item [Facets:]
  \begin{align*}
    \alpha, \beta,\gamma \geq 0;& &\alpha,\beta \leq 1;& \\
  \alpha-2\gamma+1 \geq 0; & &6\beta-4\gamma+1 \geq 0;& \\
  \alpha+6\beta-8\gamma+2 \geq 0; & & \alpha+3\beta-6\gamma+2 \geq 0.&
  \end{align*}
  \item [Vertices: $(\alpha,\beta, \gamma)$]
    \begin{align*}
    \{0,1\}^2\times\{0\};& & (1,1,1);& &(0,0,1/4);&  \\
    (0,1,1/2);& & (1,0,1/4);& & (1,1/2,3/4);& \\
    (1,1/6,1/2);& & (0,1/3,1/2). & & & \\
  \end{align*}
  \end{description}
  \vfill
  \end{minipage}
\end{figure}

    \begin{figure}[H]
  \centering
    \begin{minipage}[b]{0.4\textwidth}       \begin{tikzpicture}[tdplot_main_coords, scale=2, >=stealth]   
    \draw[fill=gray,opacity=0.5] (0,0,0) -- (1,0,0) -- (1,0,1/4) -- (0,0,1/4) -- cycle;
    \draw[fill=gray,opacity=0.5] (0,0,0) -- (0,0,1/4) -- (0,1,1/4) -- (0,1,0) -- cycle;
    \draw[fill=gray,opacity=0.5] (0,0,1/4) -- (1,0,1/4) -- (1,1/6,1/2) -- cycle;
    \draw[fill=gray,opacity=0.5] (0,0,1/4) -- (1,1/6,1/2) -- (1,1/2,3/4) -- cycle;
    \draw[fill=gray,opacity=0.5] (0,0,1/4) -- (1,1,1) -- (1,1/2,3/4) -- cycle;
    \draw[fill=gray,opacity=0.5] (0,0,1/4) -- (1,1,1) -- (1/2,1,3/4) -- cycle; 
    \draw[fill=gray,opacity=0.5] (0,0,1/4) -- (1/6,1,1/2) -- (1/2,1,3/4) -- cycle;
     \draw[fill=gray,opacity=0.5] (0,0,1/4) -- (1/6,1,1/2) -- (0,1,1/4) -- cycle;   
    \draw[dashed] (1,0,0) -- (1,1,0);
    \draw[dashed] (0,1,0) -- (1,1,0);
    \draw[dashed] (1,1,1) -- (1,1,0);
        \draw[black] (0,0,0) node[below] {$(0,0,0)$};
    \draw[black] (1,1,1) node[above] {$(1,1,1)$};
    \draw[black] (1,0,0) node[left] {$(0,1,0)$};
    \draw[black] (0,1,0) node[right] {$(1,0,0)$};
  \end{tikzpicture} 
  
    \caption{The Constraint Polytope when $(d_1,d_2)=(4,4)$}
    \label{matlab_4_4}
  \end{minipage}
\quad
  \begin{minipage}[b]{0.4\textwidth}
  {\bf Polytope in Parameter Space:}
  \begin{description}
  \item [Facets:]
  \begin{align*}
    \alpha, \beta,\gamma \geq 0;& &\alpha,\beta \leq 1;& \\
      6\alpha-4\gamma+1 \geq 0; & & 6\beta-4\gamma+1 \geq 0;& \\
  \alpha+2\beta-4\gamma+1 \geq 0; & &2\alpha+\beta-4\gamma+1 \geq 0;& \\
  \alpha+6\beta-8\gamma+2 \geq 0; & & 6\alpha+\beta-8\gamma+2 \geq 0.&
  \end{align*}
  \item [Vertices: $(\alpha,\beta, \gamma)$]
   \begin{align*}
     \{0,1\}^2\times\{0\};& & (1,1,1);&&(0,0,1/4);&  \\
   (0,1,1/4);&&(1,0,1/4);&& (1/2,1,3/4);& \\
   (1,1/2,3/4);&&(1/6,1,1/2);&&(1,1/6,1/2).&
  \end{align*}
  \end{description}
  \vfill
  \end{minipage}
\end{figure}

\begin{landscape}
\begin{figure}
\centering
\vspace{2.5 cm}
\begin{tikzpicture}[scale=1.1]
  \tikzset{venn circle/.style={draw,circle,minimum width=4cm,fill=#1,opacity=0.5}}
  \draw[very thick] (14,-3.5) -- (-3,-3.5);
  \draw[very thick] (-3,-3.5) -- (-3,3);
  \draw[very thick] (14,3) -- (-3,3);
  \draw[very thick] (14,3) -- (14,-3.5);
  \draw (-1,1) circle (1.5cm);
  \draw (1,1) circle (1.5cm);
  \draw (0,-.7) circle (1.5cm);
  \draw (2,-.7) circle (1.5cm);
  \draw (3,1) circle (1.5cm);
  \draw[dotted, line width=1pt]  (4,-.7) circle (1.5cm);
  \draw (12,-.7) circle (1.5cm);
  \draw (11,1) circle (1.5cm);
  \draw (10,-.7) circle (1.5cm);
  \draw[dotted, line width=1pt] (5,1) circle (1.5);
  \draw[dotted, line width=1pt] (9,1) circle (1.5);
  \draw[dotted, line width=1pt] (8,-.7) circle (1.5);
   \node at (-1.2,2.1) {\small $A_1$}; 
   \node at (1,2.1) {\small $A_3$}; 
   \node at (3,2.1) {\small $A_5$}; 
   \node at (-.2,-1.8) {\small $A_2$}; 
   \node at (2,-1.8) {\small $A_4$}; 
   \node at (12,-1.8) {\small $A_d$};
   \node at (10,-1.8) {\small $A_{d-2}$};
   \node at (11,2.1) {\small $A_{d-1}$};
   \node at (-1.2,1.2) {\small $\xi_1$};
    \node at (-.2,-1) {\small $\xi_2$};
    \node at (1,1.2) {\small $\xi_3$}; 
     \node at (2,-1) {\small $\xi_3$};
     \node at (11,1.2) {\small $\xi_2$};
     \node at (12,-1) {\small $\xi_1$};
     \node at (10,-1) {\small $\xi_3$};
     \node at (0,0.4) {\Large $\nu$}; 
     \node at (1,-.1) {\Large $\nu$}; 
     \node at (2,.4) {\Large $\nu$};
     \node at (11,-.1) {\Large $\nu$};
    \node at (-.8,0) {\scriptsize $\kappa-\nu$};
    \node at (11.8,.4) {\scriptsize $\kappa-\nu$};
    \node at (0,1.2) {\scriptsize $\mu-\nu$}; 
     \node at (2,1.2) {\scriptsize $\mu-\nu$}; 
     \node at (1,-.8) {\scriptsize $\mu-\nu$};
     \node at (11,-.8) {\scriptsize $\mu-\nu$};
      \node[rotate=60] at (.5,.2) {\tiny $\kappa-2\nu$}; 
     \node[rotate=-60] at (1.5,.2) {\tiny $\kappa-2\nu$}; 
     \node at (7,1.2) {\Large $\cdots$}; 
     \node at (6,-1) {\Large $\cdots$}; 
  \node[below] at (8,-2.5 ) {\Large $A_1^c \cap \cdots \cap  A_d^c$}; 
\end{tikzpicture}
\caption{Construction of Sets $A_i$, $i=1,\ldots, d$, for any $d$: 2-Dependent TDM of Proposition \ref{MA2}}
\label{2depsuffVenn}
\end{figure}
\end{landscape}

%
%

\end{document}